\documentclass[reqno,11pt]{amsart}
\usepackage{amsfonts, amsmath, amssymb, amsthm,   color}
 \usepackage{amsfonts}
\usepackage{amsmath}
\allowdisplaybreaks[1]

\def\bbm[#1]{\mbox{\boldmath $#1$}}

\newtheorem{thm}{Theorem}[section]
\newtheorem{lemma}[thm]{Lemma}
\newtheorem{prop}[thm]{Proposition}

\newtheorem{rmk}[thm]{Remark}

\theoremstyle{definition}
\numberwithin{equation}{section}

\newcommand{\esse}{\mathbb{S}}
  \newcommand{\C}{\mathbb{C}}
\newcommand{\cal}{\mathcal }
  
  \newcommand{\N}{\mathbb{N}}
\newcommand{\D}{\Delta}
\newcommand{\rr}{\mathbb{R}}
\newcommand{\intr}{\int_{\R^2}}
\newcommand{\R}{\mathbb{R}}
\newcommand{\al}{\alpha}
\newcommand{\de}{\delta}

\newcommand{\la}{\lambda}
\newcommand{\into}{\int_{B_1}}

\newcommand{\e}{\varepsilon}
\renewcommand{\(}{\left(}
\renewcommand{\)}{\right)}

\newcommand{\beq}{\begin{equation}}
\newcommand{\eeq}{\end{equation}}

\def\bbm[#1]{\mbox{\boldmath $#1$}}
\def\Re{{\rm Re}\,}
\def\Im{{\rm Im}}

\begin{document}

\title[Second order theorem]{{\Large{Second order  classification \\ for singular Liouville equations \\ \vspace{1mm} with a coefficient function}}}
\author[Teresa D'Aprile]{ Teresa D'Aprile}
\author[Juncheng Wei]{Juncheng Wei}
\author[Lei Zhang]{Lei Zhang} 
\address[Teresa D'Aprile] {Dipartimento di Matematica, Universit\`a di Roma ``Tor
Vergata", via della Ricerca Scientifica 1, 00133 Roma, Italy.}
\email{daprile@mat.uniroma2.it}
\address[Juncheng Wei]{Department of Mathematics \\ Chinese University of Hong Kong \\ Shatin, NT, Hong Kong}\email{wei@math.cuhk.edu.hk}

\address[Lei Zhang]{Department of Mathematics\\
        University of Florida\\
        1400 Stadium Rd\\
        Gainesville FL 32611}
\email{leizhang@ufl.edu}

\thanks{ T. D’Aprile acknowledges the MUR Excellence
Department Project MatMod@TOV awarded to the Department of Mathematics, University of Rome
Tor Vergata, CUP E83C23000330006.  The research of J. Wei is partially supported by RGC grant "New frontier in singular solutions of nonlinear partial differential equations". The research of  Lei Zhang is partially supported by Simons Foundation Grant SFI-MPS-TSM-00013752.}
\begin{abstract}
In this article we 
are concerned with the existence of blow-up solutions
 to the following boundary value problem $$-\Delta v= \la V(x) |x|^2e^v\;\hbox{ in } B_1,\quad	v=0 \;\hbox{ on }\partial B_1,$$  where $B_1$ is the unit ball in $\R^2$ centered at the origin, $V(x)$ is a positive  smooth  potential, and $\la>0$ is a small parameter. 
We find necessary and sufficient conditions on the potential $V$  for the existence 
of  a blow-up sequence of solutions  tending to
infinity near the origin as $\la\to 0^+$. 
In particular, we obtain a second-order classification of the coefficient function 
$V$ for which (simple) blow-up occurs at the origin.
\bigskip

\noindent {\bf Mathematics Subject Classification 2010:} 35J20, 35J57,
35J61

\noindent {\bf Keywords:}  singular Liouville equation,  bubbling solutions,
Pohozaev identities
\end{abstract}

\maketitle

{\Large\section{\bf{ Introduction}}}

Singular Liouville-type equations play a prominent role in the study of problems in conformal geometry and mathematical physics.
The well-known  prescribing Gauss curvature equation, the mean field equation, the Chern–Simons self-dual vortices, the statistical mechanics of two-dimensional turbulence,
as well as systems of equations of  Toda type, provide a few representative examples of their applications (\cite{calipuma}, \cite{chaki}, \cite{joliwa}, \cite{ta1}, \cite{ta2}, \cite{tro}, \cite{ya}). 

In this paper  we consider a smooth and bounded domain $\Omega\subset\R^2$  containing the origin,  and we study the following singular Liouville equation 
 \begin{equation}\label{eq00}
  \left\{
      \begin{aligned}&- \D v = \la |x|^{2\alpha}V(x)e^v
      &  \hbox{ in }&  \Omega,\\
    &  \ v=0 &  \hbox{ on }& \partial \Omega.
  \end{aligned}
    \right. \end{equation}
Here $\la$ is a small positive  parameter,  
$V$ is a positive smooth function, and $\alpha>0$. 

The analysis of this class of equations is usually challenging, since the exponential nonlinear term is closely related to the lack of compactness in the variational approach. One important feature of these equations is the blow-up phenomenon: indeed, when a sequence of solutions tends to infinity near a blow-up point, the asymptotic behavior of the solutions near such a point carries important information that is crucial for  some applications.
To this purpose, refined blow-up techniques have been developed to describe the local asymptotic profile of solution sequences around a blow-up point, where bubbling phenomena occur; we refer to  the following works as a partial collection of references: \cite{bachenlita}, \cite{bata}, \cite{breme}, \cite{chenlin00}, \cite{chenlin0}, \cite{ChenLin}, \cite{delkomu}, \cite{espogropi}, \cite{li}, \cite{lisha}, \cite{mawe}, \cite{mal}, \cite{nasu}, \cite{su}, \cite{tar}, \cite{we}, \cite{zha1}.

The   analysis  of the blow-up behaviour at points away from $0$  is well understood (see, for example \cite{breme}, \cite{chenlin0}, \cite{li}, \cite{lisha}, and references therein).  In  \cite{li}, using the method of moving planes to perform a local analysis of problem \eqref{eq00}, Li proved a uniform estimate for a sequence of solutions $v_k$ having a single blow-up point different from $0$. More precisely,  Li showed that the difference between $v_k$ and a suitably scaled sequence of “standard bubbles” is only of order   $O(1)$.
Later, Chen and Lin (\cite{ChenLin}) and  Zhang (\cite{zhang2006}, \cite{zha1}) further refined this profile analysis, improving the uniform estimate.  This type of approximation has played an important role in a number of applications, such as degree counting theorems (\cite{chenlin0}, \cite{ChenLin}), among others.

One of the main difficulties in the study of blow-up solutions to \eqref{eq00} arises when the blow-up point happens to be the origin, i.e. when it coincides with the location of a singularity. 
 It is known from the works of Kuo and Lin (\cite{KuoLin}) and  Bartolucci and Tarantello (\cite{bata}), later refined in \cite{bachenlita}, that if $\al\not\in\N$ (the set of natural numbers), then blow-up solutions satisfy a spherical Harnack inequality around the singular point $0$. Moreover,  by using an improved argument based on the Pohozaev identity,  they established a result similar
to Li’s, concerning
the quantization phenomena for blow-up solutions: more precisely,  the profile of the solutions
differs from global solutions of a Liouville type equation only by a uniformly
bounded term. Naturally, their method also works when  $\alpha=0$ and provides an alternative proof of Li’s result. Subsequently, the paper \cite{zha1} further  improves their result and establishes an expansion of the solutions near the blow-up points with a sharp error estimate.
 
 However, when the strength of the singular source is an integer, i.e.  $\al=N\in\N$,  we say that $0$ is a quantized singular source, and the so called “non-simple blow-up” phenomenon does occur. This means that the bubbling solutions may not satisfy the spherical  Harnack inequality, and multiple local maxima near the singular source may appear. More precisely, let us consider a sequence $\la_k\to 0^+$ and a corresponding sequence $v_k$ of solutions  to the following perturbed problem

\begin{equation}\label{eq000}
  \left\{
      \begin{aligned}&- \D v_k = \la_k |x|^{2N}V(x)e^{v_k}
      &  \hbox{ in }&  \Omega,\\
    &  \ v_k=0 &  \hbox{ on }& \partial \Omega,
  \end{aligned}
    \right. \end{equation}
    where $N\in\N$.
    For a sequence of blow-up solutions $v_k$, it is standard
to assume a uniform bound on the total mass (see \cite{batta}): there
exists $C > 0$, independent of $k$, such that

\beq\label{unifbound}\la_k\int_\Omega |x|^{2N}V(x)e^{v_k}dx\leq C.\eeq
We also suppose that $v_k$ admits the origin as its only blow-up point in $\Omega$; in other words,
\beq\label{eqq000}\max_{\Omega} v_k(x)\to +\infty,  \eeq
and 
 for any compact set  $K\subset\overline{\Omega}\setminus\{0\}$ there exists a constant $C(K)$ (depending on $K$) such that 
\beq\label{eqq0000}\max_{K}v_k\leq C(K).\eeq 

When $N\in\N$, two cases may occur for bubbling phenomena: $v_k$ satisfies the simple blow-up property at $0$, or $v_k$ is not simply blowing up at $0$. More precisely, one of the following two possibilities occurs.

\

\begin{enumerate}
\item[{\bf{I CASE}}] We say that $v_k$ has the \textit{simple blow-up} property if 
\beq\label{simple}\sup_{\overline{\Omega}}\Big(v_k(x)+2(N+1)\log |x|+\log\la_k\Big) \leq C.\eeq
\end{enumerate}
In fact, the simple blow-up property is a necessary and sufficient condition for the validity of the asymptotic behavior of the sequence $v_k$, after scaling, as a “single" bubble around $0$
; we refer to \cite{selfdual}.

\

\begin{itemize}
\item[{\bf{II CASE}}] We say that $v_k$ is \textit{not-simply blowing up} at $0$  if 
\beq\label{nons}\max_{\overline\Omega}\Big(v_k(x)+2(N+1)\log|x| +\log\la_k\Big)\to +\infty\eeq which is equivalent to saying that the spherical Harnack inequality does not hold for $v_k$.\end{itemize}

 In the latter case, 
  it was established by   \cite{bata} and \cite{KuoLin}  that
 if $v_k$ exhibits a non-simple blow-up profile \eqref{nons},  then  there are $N+1$ local maximum points of $v_k$, denoted by  $\beta_{k,0},\ldots, \beta_{k,N}$, and they are asymptotically evenly distributed on $\esse^1$ after scaling according to their magnitude: more precisely, along a subsequence,
$$
\lim_k\frac{\beta_{k,i}}{|\beta_{k,0}|}=e^{{\rm i}(\theta_0+\frac{2\pi i}{N+1})}, \quad i=0,\ldots, N.$$

\

It follows from the analysis in \cite{selfdual} that the simple blow-up property  always holds when $N\not\in \N$. On the other hand, for $N\in\N$, the origin may become a non-simple blow-up point, due to the richer structure of the solution set of the planar “singular” Liouville equation:

\beq\label{limit}
-\Delta U=|x|^{2N}e^U\quad \hbox{in}\;\; \rr^2,\qquad
\int_{\R^2} |x|^{2N}e^{U(x)}dx<+\infty,
\eeq which arises as the so called “limiting” problem, in the sense that the profile describing $v_k$ after blow-up is given by a solution of \eqref{limit}.
In view of the classification result in \cite{Pratar}, we know that, for all $N\in\N$, all solutions of  problem \eqref{limit} are given, in complex notation,  by the three-parameter family of functions \beq\label{bubble} U_{\tau, b}=
\log  \frac{8(N+1)^2\tau}{ (\tau+|x^{N+1}-  b|^2)^2}\quad
\tau>0,\,b\in \C.
\eeq

Note that, in contrast to the case $N\not\in\N$ (where \eqref{bubble} holds only with $b=0$), the solution $U_{\tau, b}$ need not be radial, and may attain its maximum value at a point different from the origin (in fact, at $N+1$ points corresponding to the $N+1$ complex roots of $b$). Thus, when $N\in\N$, the non-simple blow-up phenomenon may occur. 

However, the question on the existence of 
blow-up sequences satisfying  \eqref{eq000}--\eqref{eqq0000} 
 is far from being completely settled, and  
only partial results are currently known. The construction of solutions exhibiting a 
blow-up profile at the origin has been carried out for problem \eqref{eq000}--\eqref{eqq0000} with $V = 1$ in the following cases: for any fixed integer $N\geq 1$ provided that the singular point is shifted in a 
$\la$-dependent way, namely as $|x-p_\la|^{2N}$  (see \cite{delespomu}); or if $\Omega$ is the unit ball and the weight of the source is a positive number $N = N_\la$ approaching an integer $N$ from the right  (see \cite{dawe}). On the other hand, in \cite{dawezhasym}, an analogous result is obtained  for any fixed positive integer $N$ when the potential $V=V_\la$ is symmetric and $\la$-dependent. To our knowledge, the existence of non-simple blow-up phenomenon for \eqref{eq000}--\eqref{eqq0000},  with a fixed potential $V$ and a fixed integer $N$ independent of $\la$, is still an open problem, even in the case of the ball: the only known example is constructed in 
\cite{dawezhaasy} for a special class of potentials.
 In this paper we address this case and we  shall contribute in this direction. 
\

In particular, we are interested in finding 
conditions on the  coefficient function
 $V$
under which there exists a sequence of solutions   blowing up at $0$ (either simple or non simple blow-up). More precisely, we investigate the situation when 
 $N=1$ and  $\Omega$ is the unit ball $B_1$ centered at the origin:
$$B_1:=\{x\in\R^2\,|\, |x|<1\} ,$$
so that problems \eqref{eq000}--\eqref{eqq0000} become
\begin{equation}\label{eq0}
  \left\{
      \begin{aligned}&- \D v_k = \la_k V(x) |x|^{2} e^{v_k}&  \hbox{ in }&  B_1,\\
    &  \ v_k=0 &  \hbox{ on }& \partial B_1,\\ &\lambda_k \int_{B_1}V(x) |x|^{2}e^{v_k} dx \leq C,
  \end{aligned}
    \right. \end{equation}
and we assume that $v_k$  admits the origin as its only blow-up point in $B_1$: \beq\label{eqq0}\max_{B_1}v_k\to +\infty, \quad \sup_{\e\leq |x|\leq 1}v_k<C(\e)\quad \forall \e>0.\eeq
In addition, we impose standard assumptions on $V$:\beq\label{eqq1} V\in C^3(\overline B_1), \quad \min_{\overline B_1}V>0,\eeq
and 
\beq\label{eqq2} V(0)=1,\qquad \nabla V(0)=0, \qquad 0 \hbox{ is a nondegenerate critical point of } V.\eeq 

We point out that the hypothesis on the vanishing of the first derivatives of the potential $V$ at $0$  is a necessary condition for the validity of  a blow-up behaviour at $0$: indeed,   it has been shown in \cite{WeiZhang1}  that  even in the case when 
 blow-up solutions violate the
spherical Harnack inequality 
near a singular source, the first derivatives of the coefficient function must tend to zero. 

This paper is connected to the research developed in  \cite{WeiZhang3}-\cite{WeiZhang4} and \cite{WeiZhang}-\cite{WeiZhang1}-\cite{WeiZhang2}, where various estimates are established for  singular Liouville equations with quantized singularities in a more general setting and for a generic $N\in\N$.  More precisely, in these works  the authors investigate which vanishing theorems the coefficient function 
$V$ must satisfy when  $0$  is the unique blow-up point in a neighborhood of the origin. In particular it has been proved in \cite{WeiZhang4} and \cite{WeiZhang2} that, in non-simple bubbling situations, the following Laplacian vanishing theorem holds.

\begin{thm}[\bf{\cite{WeiZhang4}, \cite{WeiZhang2}}]\label{thweizhang} Assume that 
$V$ satisfies \eqref{eqq1}. Let $v_k$  be a sequence of non-simple blow-up solutions around the origin satisfying \eqref{eq0}--\eqref{eqq0}.  Then,
$$\Delta V(0)=0.
$$
\end{thm}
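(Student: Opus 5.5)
The plan is to follow the strategy of \cite{WeiZhang4}, \cite{WeiZhang2}: compare $v_k$ with a sharp approximating global solution, then read off $\Delta V(0)$ from a Pohozaev-type identity in which every other term has been estimated to higher order. Set $\e_k:=e^{-\frac{1}{2(N+1)}\max v_k}$ with $N=1$, and rescale
$$u_k(y)=v_k(\e_k y)+2(N+1)\log\e_k+\log\la_k .$$
Then $u_k$ solves $-\Delta u_k=V(\e_k y)|y|^{2}e^{u_k}$ on $B_{1/\e_k}$.

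In the non-simple case \eqref{nons}, the $N+1=2$ local maxima $\beta_{k,i}$ satisfy $\delta_k:=|\beta_{k,0}|/\e_k\to\infty$, after a suitable normalisation. A second rescaling by $\delta_k$ produces two standard (non-singular) bubbles located near antipodal points $\pm e^{{\rm i}\theta_0}$. We take as approximation the global solution $U_k=U_{\tau_k,b_k}$ from \eqref{bubble}, with $b_k\approx\delta_k^{2}e^{2{\rm i}\theta_0}$ chosen so that $U_k$ has its maxima at the $\beta_{k,i}$.

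\emph{First key step: the first-order correction.} Expanding $V(\e_k y)=1+\e_k\nabla V(0)\cdot y+\frac{\e_k^2}{2}D^2V(0)[y,y]+O(\e_k^3|y|^3)$, one solves the linearized equation around $U_k$ with this forcing, adding a correction $c_k$ that contains the radial part of $\frac{\e_k^2}{4}\Delta V(0)|y|^2$ together with the angular modes. Its nondegeneracy is handled through the kernel of the linearized operator of \eqref{limit}, which for $N=1$ is spanned by the derivatives in $\tau$ and in $b$. One then proves the bound
$$|u_k-U_k-c_k|\le C\,(\e_k\delta_k)^{2+\sigma}$$
on $B_{\tau/\e_k}$, by a contradiction/maximum-principle argument applied to the normalized error, in the style of \cite{zha1}. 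Before this, one already has $\nabla V(0)\to0$ (indeed $\nabla V(0)=0$ by the vanishing theorem of \cite{WeiZhang1}), and one shows that $b_k$ is at the correct scale.

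\emph{Second key step: the Pohozaev identity and its leading term.} Apply the Pohozaev identity on $B_{r}$ with the multiplier $y\cdot\nabla u_k$:
$$\int_{B_r}\big(2(N+1)V+\e_k\, y\cdot\nabla V(\e_k y)\big)|y|^{2}e^{u_k}=\text{boundary terms}.$$
Outside the bubble region $u_k$ is approximated by the Green's function, with mass exactly $8\pi(N+1)$ up to an error, so the boundary terms match the $2(N+1)V$ part up to $o(\e_k^2\delta_k^2)$. The leading surviving interior contribution is the term $\e_k^2 D^2V(0)[y,y]$. On the non-simple bubble, $|y|^2e^{U_k}$ is concentrated near $|y|\sim\delta_k$ at two antipodal points, so the cross terms of $D^2V(0)$ cancel. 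What remains is
$$c\,\e_k^2\delta_k^2\,\Delta V(0)+o(\e_k^2\delta_k^2)=0,\qquad c\ne0,$$
and dividing by $\e_k^2\delta_k^2$ gives $\Delta V(0)=0$. The anisotropic part also cancels, by the evenly distributed configuration.

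\emph{Main obstacle.} The hardest step is proving that every error term, in particular the contribution of $c_k$ and the boundary flux mismatch, is $o(\e_k^2\delta_k^2)$ and not merely $O(\e_k^2\delta_k^2)$. This requires the sharp pointwise estimate of the first key step, with control of the $b$-kernel directions so that the error does not carry a component that grows with $\delta_k$. To get this control, I would first derive the analogous Pohozaev identity with the translation multipliers $\partial_{y_j}u_k$. This pins down $b_k$ and forces the kernel coefficients to be small, and then closes the bootstrap for the error.
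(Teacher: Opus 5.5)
The paper does not prove this statement; it quotes it from \cite{WeiZhang4}, \cite{WeiZhang2}. Your sketch has a genuine gap at the step where you extract $\Delta V(0)$ from the order-$\e_k^2\delta_k^2$ term of the $y\cdot\nabla u_k$ Pohozaev identity. For $N=1$ there are only two local maxima, at $\pm\delta_k e^{{\rm i}\theta_0}$. A quadratic form is even, so $D^2V(0)[y,y]$ takes the same value $\delta_k^2D^2V(0)[e,e]$, with $e=e^{{\rm i}\theta_0}$, at both points. The harmonics $\cos 2\theta$ and $\sin 2\theta$ are invariant under rotation by $\pi$, so neither the mixed terms nor the trace-free part cancel; that averaging argument needs $N+1\ge 3$ equally spaced points. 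Your leading interior term is therefore proportional to $D^2V(0)[e,e]$, not to $\Delta V(0)$. For instance, with $D^2V(0)=\mathrm{diag}(0,1)$ and $\theta_0=0$ it vanishes while $\Delta V(0)=1$. Your assertion that the boundary terms reproduce $2(N+1)\int V|y|^2e^{u_k}$ up to $o(\e_k^2\delta_k^2)$ is also circular. At that order the $y\cdot\nabla u_k$ identity is exactly the relation that fixes the deviation of the local mass from $16\pi$, so on its own it gives no constraint on $V$.

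The obstruction is structural, not a matter of bookkeeping. Along the line through the two maxima, $\frac{\Delta V(0)}{4}|y|^2$ coincides with the harmonic polynomial $\frac{\Delta V(0)}{4}\Re\big(e^{-2{\rm i}\theta_0}y^2\big)$. Their difference $\frac{\Delta V(0)}{2}(y\cdot e^{\perp})^2$ has zero value and zero gradient at both bubble centres. A harmonic part of $\log V$ can be absorbed into $u_k$ by changing only the boundary data, which are unconstrained in your local rescaled problem. So at order $\e_k^2\delta_k^2$ no identity can force $\Delta V(0)=0$ when $N=1$.

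The paper's own identities confirm this. In the non-simple limit, with $\nabla V(0)=0$, Propositions \ref{propo2}, \ref{lililili} and \ref{necessary} give only $\partial_{11}V(0)\cos^2\theta_0-\partial_{22}V(0)\sin^2\theta_0=0$ and $\partial_{12}V(0)+\Delta V(0)\sin\theta_0\cos\theta_0=0$. For $\theta_0=0$ these leave $\Delta V(0)=\partial_{22}V(0)$ undetermined.

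The Laplacian enters only through the transversal term integrated against the two bubbles, whose width is of order $\delta_k^{-1}$ in the normalization $\max u_k=O(1)$. This contribution is roughly $\e_k^2\delta_k^{-2}\log\delta_k$, smaller than the order $\e_k^2\delta_k^2$ at which your plan works by a factor of about $\delta_k^{-4}\log\delta_k$. Proving the theorem for $N=1$ needs expansions and error control sharp enough to isolate this logarithmic transversal effect. That finer analysis is what the cited works are needed for, and your plan does not supply it.
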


We also mention that  in \cite{WeiZhang4} higher-order vanishing theorems are obtained. Moreover, in \cite{linawu} the author  proves that the non-simple nature of the blow-up sequence in Theorem \ref{thweizhang} is essential. Indeed, if this assumption is violated, an example can be constructed for which the corresponding Laplacian vanishing property fails. The key point of the proof is the use of a radial coefficient function, which allows one to reduce the problem to the radial case and avoid kernel functions in the linearized equation associated with the quantized setting. 

The main contribution of this paper is to strengthen the conclusions of {\cite{WeiZhang4}, \cite{WeiZhang2}, \cite{linawu}, thereby  yielding a  complete classification 
of the blow-up picture at $0$ for $N=1$. 
Indeed, we establish that  a necessary and sufficient condition on the potential $V$, in addition to the natural assumptions \eqref{eqq1}--\eqref{eqq2}, for the presence of 
bubbling phenomena at $0$ for problem \eqref{eq0}--\eqref{eqq0} is expressed in terms of the second derivatives of $V$ 
 at the origin.
More specifically, using assumptions \eqref{eqq1}–\eqref{eqq2}, and up to a rotation of the coordinates, we may assume that $V$ admits the following expansion in a small neighborhood of the origin: \beq\label{rmmm}V(x)=1+ \frac{\gamma_1 x_1^2+\gamma_2 x_2^2}{2}+O(|x|^3) \hbox{ as }x\to 0.\eeq

\begin{thm}\label{th1} Assume that 
$V$ satisfies \eqref{eqq1}--\eqref{eqq2} and that, up to a rotation, \eqref{rmmm} holds. 
Then, a necessary and sufficient condition for the existence of a  blow-up sequence  $v_k$   satisfying \eqref{eq0}--\eqref{eqq0}
is that \beq\label{pot}\operatorname{det}D^2 V(0)=\gamma_1\cdot\gamma_2>0.
\eeq
 Furthermore, $v_k$ satisfies the simple blow-up property \eqref{simple} and the limiting profile of $v_k$, after a suitable scaling, is a bubble \eqref{bubble} with $N=1$ and  $b=
 (b_1,0) $, where $$b_1>0 \text{ if } |\gamma_1|<|\gamma_2|,\qquad
b_1<0 \text{ if } |\gamma_1|>|\gamma_2|,\qquad
b_1=0 \text{ if } \gamma_1=\gamma_2.
$$ 
\end{thm}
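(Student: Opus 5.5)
The plan is to prove necessity and sufficiency separately. In both directions the heart of the matter is the second-order Pohozaev-type balance conditions generated by the three-dimensional kernel of the linearized operator around $U_{\tau,b}$ (with $N=1$). These conditions come from the parameters $\tau$, $\Re b$, $\Im b$; the translations are not included, since they are destroyed by the weight $|x|^2$.

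\emph{Step 1: exclude non-simple blow-up.} Suppose $v_k$ is not simply blowing up. Theorem \ref{thweizhang} gives $\Delta V(0)=\gamma_1+\gamma_2=0$, so \eqref{pot} fails. By nondegeneracy in \eqref{eqq2} we then have $\gamma_1=-\gamma_2\neq 0$, and it remains to show that this residual case is also impossible. For this I would use the $N+1=2$ local maxima $\beta_{k,0},\beta_{k,1}\approx\pm\beta_{k,0}$ and the refined expansion of $v_k$ around $U_{\tau_k,b_k}$ with $|b_k|\to\infty$ in rescaled variables. I would then apply the Pohozaev identities with the vector fields $\partial_{x_1},\partial_{x_2}$ on small balls around $\beta_{k,i}$. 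At leading order these should give $D^2V(0)\,\beta_{k,0}=o(|\beta_{k,0}|)$, which contradicts the invertibility of $D^2V(0)$. So every blow-up sequence is simple.

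\emph{Step 2: necessity for simple blow-up.} Set $\e_k^{-4}=\la_k e^{\max v_k}$ (the scaling adapted to $|x|^2$) and $y=x/\e_k$. Using \eqref{simple} and the refined estimates of \cite{zha1}, I would write
\[
v_k(\e_k y)+4\log\e_k+\log\la_k=U_{\tau_k,b_k}(y)+\e_k^2 w_k(y)+o(\e_k^2),\qquad b_k\to b,
\]
where $w_k$ solves the linearized equation with source $\tfrac12(\gamma_1y_1^2+\gamma_2y_2^2)|y|^2e^{U}$. I would split this source as
\[
\tfrac14(\gamma_1+\gamma_2)|y|^2\cdot|y|^2e^U+\tfrac14(\gamma_1-\gamma_2)\Re(y^2)\cdot|y|^2e^U .
\]
Solvability of the problem for $w_k$, with the decay and boundary behaviour forced by $v_k=0$ on $\partial B_1$, requires orthogonality to the kernel elements $Z_1,Z_2=\partial_{\Re b}U,\partial_{\Im b}U$; equivalently, one can take the Pohozaev identities with the vector fields $\Re(\bar z\,\cdot)$ adapted to $z^2$. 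The $\partial_\tau$ direction only fixes $\tau$ through the Green function and imposes no constraint.

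These orthogonality conditions become two explicit integrals in $b=b_1+{\rm i}b_2$. I expect them to reduce to $b_2=0$ together with a scalar equation of the form
\[
(\gamma_1+\gamma_2)\,A(b_1,\tau)+(\gamma_1-\gamma_2)\,B(b_1,\tau)=0,
\]
where $A$ is odd in $b_1$ and $B$ is even and positive. The identity that should make everything work is $\int |y|^2\Re(y^2)\,|y|^2e^{U_{\tau,b}}\,dy$, computed via the substitution $z=y^2$, which reduces everything to the regular bubble centred at $b$. Analysing the ratio $B/A$ should show that a solution exists only when $|\gamma_1-\gamma_2|<|\gamma_1+\gamma_2|$, i.e.\ $\gamma_1\gamma_2>0$. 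The sign of $b_1$ should be $-\operatorname{sign}\big((\gamma_1-\gamma_2)/(\gamma_1+\gamma_2)\big)$, which is exactly the dichotomy $|\gamma_1|\lessgtr|\gamma_2|$ in the statement, with $b_1=0$ when $\gamma_1=\gamma_2$.

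\emph{Step 3: sufficiency.} Assume $\gamma_1\gamma_2>0$. I would construct the solutions by Lyapunov--Schmidt reduction. The ansatz is the projection onto $H^1_0(B_1)$ of $U_{\tau,b}(x/\e)-4\log\e$, corrected by $\e^2 w$, with $\tau$ tied to $\la$ and $\e$ through the Robin function of the ball. The finite-dimensional reduced map is then, up to $o(1)$, the function $F(b)$ from Step 2. The solution $b^*=(b_1^*,0)$ is a nondegenerate zero of $F$ (by the monotonicity of $B/A$), or at least has nonzero Brouwer degree, so the reduced problem can be solved for small $\la$.

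\emph{Main obstacle.} I expect the main obstacle to be the explicit evaluation of the two projection integrals and the proof that the resulting one-variable equation has a (nondegenerate) root exactly when $\gamma_1\gamma_2>0$. The $O(\e^2)$ boundary and Green contributions must also be shown to cancel in the $Z_{1,2}$ directions, which should follow by radial symmetry of the ball. A secondary difficulty is the non-simple case $\gamma_1=-\gamma_2$ in Step 1, which needs sharper localized Pohozaev estimates than the Laplacian vanishing theorem itself.
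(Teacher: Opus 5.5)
\textbf{Verdict: there is a genuine gap, in the necessity half.} Your endpoints match the paper's. You impose constraints only in the $Z_1,Z_2$ directions, reduce via $w=z^2$ to a scalar relation $\pi(\gamma_1-\gamma_2)+(\gamma_1+\gamma_2)I(\xi_1,0)=0$, and construct solutions by Lyapunov--Schmidt around $W_{\la,b}$ with $|b|\sim\sqrt\la$. The problems are in Steps~1 and~2.

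Step~1 is not an argument. You claim that translation Pohozaev identities on small balls around $\beta_{k,0},\beta_{k,1}$ give $D^2V(0)\beta_{k,0}=o(|\beta_{k,0}|)$. To get this you would have to control, at order $|\beta_{k,0}|^2$ after rescaling, the interaction between the two bumps and the corrections to each local bubble in a two-scale non-simple profile. Nothing you cite provides this. Proving it is a vanishing theorem of Wei--Zhang type in its own right, as you essentially acknowledge.

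Step~2 rests on an expansion $U_{\tau_k,b_k}+\e_k^2w_k+o(\e_k^2)$ with controlled growth of $w_k$ and of the remainder. This is far stronger than the $o(1)$ profile the paper uses (from \cite{gluck2012}, \cite{zhang2006}). You neither derive it nor locate it in the literature for quantized sources.

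The missing idea is to use the singular multipliers $\frac{\partial v}{\partial x_1}\frac{x_1}{|x|^2}-\frac{\partial v}{\partial x_2}\frac{x_2}{|x|^2}$ and $\frac{\partial v}{\partial x_1}\frac{x_2}{|x|^2}+\frac{\partial v}{\partial x_2}\frac{x_1}{|x|^2}$ on $B_1\setminus B_\e$. These are the holomorphic fields $1/z$ and ${\rm i}/z$, which generate translations in $w=z^2$, so they are exactly your $Z_1,Z_2$ directions.
\begin{itemize}
\item The weight $|x|^2$ cancels the singularity, and $(x_1,-x_2)$, $(x_2,x_1)$ are divergence free. Hence Proposition~\ref{propo2} holds for \emph{every} solution, simple or not.
\item The inner boundary terms are quadratic in $\nabla v(0)$. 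This tends to $0$ by Proposition~\ref{propo1} (multiplier $x_i/|x|^2$, using $\nabla V(0)=0$).
\item The outer boundary terms vanish because $v_k\to 8\log\frac{1}{|x|}$ in $C^1(\partial B_1)$ on the ball.
\end{itemize}
Using only $\la_k|x|^2Ve^{v_k}\rightharpoonup 16\pi\delta_0$, you get order-one constraints: $\la_k\int_{B_1}(\gamma_1x_1^2-\gamma_2x_2^2)e^{v_k}=o(1)$ and $(\gamma_1+\gamma_2)\la_k\int_{B_1}x_1x_2e^{v_k}=o(1)$.

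If $\gamma_1=-\gamma_2$, the first constraint reads $16\pi\gamma_1=o(1)$. This contradicts nondegeneracy for simple and non-simple sequences alike. So $\Delta V(0)\neq 0$, and Theorem~\ref{thweizhang} forces simple blow-up; your Step~1 becomes unnecessary.

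In the simple case, the same two constraints evaluated on the $o(1)$ profile via $z=x^2$ give $\xi_2=0$ and $\pi(\gamma_1-\gamma_2)+(\gamma_1+\gamma_2)I(\xi_1,0)=0$. Then $\gamma_1\gamma_2>0$ follows from $|I(\xi_1,0)|<\pi$ (Lemma~\ref{signlemma}). No second-order expansion is needed, because the singular multiplier lifts the Hessian terms to order one.

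Your Step~3 is essentially the paper's. Two remarks: the $\e^2 w$ correction is not needed, and nondegeneracy of the zero must also be checked in the $b_2$ direction (Lemma~\ref{finalcor}), not only through monotonicity in $b_1$.
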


\begin{rmk}\label{th1rem} Theorem \ref{th1} proves that if $0$ is a nondegenerate critical point of $V$, then  not only are  non-simple blow-up situations   ruled out, in agreement with  \cite{WeiZhang4}-\cite{WeiZhang2}, but  simple blow-up can occur if and  only if the  eigenvalues of  the Hessian matrix of $V$ at $0$  have the same sign. Moreover, the corresponding vector $b$ is an eigenvector;
 in particular, if $V$ is radial at the leading order--namely when the eigenvalues coincide--then  the sequence is radially symmetric in its first order approximation and behaves as a  bubble around the maximum point $0$.
\end{rmk}

 The proof of the necessary condition relies on the derivation of a Pohozaev-type  identity, obtained  by suitably choosing
the test functions.  In particular,  we exploit the invariance of the non-radially symmetric solutions for the limiting problem \eqref{limit} with respect to inversion $x\mapsto \frac{x}{|x|^2}$,  
and we employ a test function singular at $0$ to derive 
a Pohozaev identity in the punctured domain $B_1\setminus \{|x|\leq \e\}$;  we then use the  estimates   for bubbling solutions obtained in \cite{bara2} and \cite{WeiZhang}  to
handle the terms arising on the inner boundary $|x|=\e$ in the limit as $\e\to 0^+$. 
A delicate analysis is required to address the boundary terms, for which 
it is crucial  
to consider the case $\Omega=B_1$ in order to  prove that the integral  terms on the outer boundary $\partial B_1$ vanish in the limit $\la\to 0^+$. Moreover, the method fails for $N\geq 2$, since in the expansion of the Pohozaev identity we no longer capture information on the Hessian matrix of $V$ (see Remark \ref{fails} for technical details).

 The main tool we use to construct the blow-up solutions (sufficient condition) is the Lyapunov-Schmidt reduction,
which is widely employed in many planar critical problems. Roughly speaking, 
we look for a solution to \eqref{eq0} in a small neighborhood of the first approximation provided by the bubble \eqref{bubble},  with a suitable choice of the parameters $\tau, b$. As is quite standard in singular perturbation theory, a crucial ingredient is the nondegeneracy of the explicit
limit
family of solutions \eqref{bubble} to the limiting Liouville problem \eqref{limit}, in the sense that all bounded elements in the kernel of the linearized operator correspond to variations along the parameters of
the family, as established in \cite{delespomu}.  By carrying out the reduction process,  the problem of constructing blow-up families of solutions to \eqref{eq0}--\eqref{eqq0} is reduced  to the problem of finding critical points of a functional depending on the parameter $b$. More specifically, the reduced functional turns out to be
 uniformly close to a limiting  functional ${\mathcal F}$, which admits  $0$ as   a nondegenerate critical point;  consequently, the persistence of a critical point    is guaranteed for sufficiently small uniform perturbations. This allows us to detect a family of bubbling solutions to \eqref{eq0}--\eqref{eqq0}.

 \
 
 The organization of the article is as follows. In section 2, we prove the   necessary condition for the validity of a blow-up behaviour at $0$. We first   derive two Pohozaev identities for  problem \eqref{eq0}--\eqref{eqq0}; next, using as a crucial ingredient the profile analysis for sequences of solutions having a single blow-up point at $0$  obtained in \cite{bara2} and \cite{WeiZhang}, 
we exploit the Pohozaev identities in a delicate way and derive an integral estimate involving the Hessian matrix of $V$. 

In section 3, we present a construction of bubbling solutions based on the Lyapunov-Schmidt reduction method. We first estimate the error up to which the approximating solution solves problem \eqref{eq0}; next, we prove
the invertibility of the linearized operator, and finally we address  the solvability of the  reduced
problem by a contraction argument. This  completes the proof of Theorems \ref{th1}.

\

\noindent{\bf{NOTATION}}: Throughout the paper, we will frequently denote by $C>0$, $c>0$ fixed
constants that may change from line to line, but are always
independent of the variables under consideration. We also use the standard
notations $O(1)$, $o(1)$, $O(\lambda)$, $o(\lambda)$ to describe
the asymptotic behaviors of quantities.

{\Large{\section{\bf{Proof of Theorem \ref{th1} Part I: Necessary Condition for Blow-Up}}}}
In this section, we derive Pohozaev-type identities for solutions of the singular Liouville equation
\beq\label{eq1}\left\{\begin{aligned}& -\Delta v=\la |x|^{2} V(x) e^v&\hbox{ in } &B_1\\ &v=0& \hbox{ on } &\partial B_1\end{aligned}\right..\eeq
  We then exploit these
integral identities to provide the necessary condition on the potential $V$ stated in Theorem \ref{th1} for the existence of associated families of 
blow-up solutions. 

\

\subsection{{\large{Pohozaev-type identities.}}} Throughout this section, we assume that 
the potential $V>0$ belongs to the class $C^1(\overline\Omega)$. 
We then develop some basic properties  of solutions  $v\in C(\overline B_1)$ to the boundary value problem \eqref{eq1}

The following proposition establishes an integral representation for the first derivatives.

\begin{prop}\label{propo1}
Any solution $v$ of \eqref{eq1} satisfies the following Pohozaev identity:
$$\begin{aligned}&\frac12\int_{\partial B_1}\bigg|\frac{\partial v}{\partial \nu}\bigg|^2  x_i d\sigma+\la\int_{\partial B_1} V(x) x_i d\sigma
+2 \int_{\partial B_1} \frac{\partial v}{\partial \nu} x_i d\sigma =4\pi \frac{\partial v}{\partial x_i}(0) +\la\int_{B_1} |x|^{2} \frac{\partial V}{\partial x_i}(x) e^vdx\end{aligned}$$
for $i=1,2,$ where $\nu=(\nu_1,\nu_2)$ stands for the unit outward normal.
\end{prop}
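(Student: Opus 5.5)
The plan is to turn the singular weight $|x|^2$ into a Dirac mass by the change of unknown
$$w(x):=v(x)+2\log|x| .$$
Since $\Delta\log|x|=2\pi\delta_0$, the function $w$ satisfies
$$-\Delta w=\la V(x)e^{w}\quad\text{in } B_1\setminus\{0\},\qquad w=0 \text{ on }\partial B_1 .$$
This is a regular Liouville equation away from the origin. Its right-hand side $\la Ve^w=\la|x|^2Ve^v$ is continuous up to $0$. Because $v\in C(\overline B_1)$ solves $-\Delta v=\la|x|^2Ve^v$ with a bounded right-hand side, elliptic regularity gives $v\in C^{1,\alpha}(\overline B_1)$. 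Hence $\nabla v(0)$ makes sense and $|\nabla v|$ is bounded near $0$. I then apply the standard translation-type Pohozaev argument: multiply the equation for $w$ by $\partial_{x_i}w$ and integrate over the annulus $A_\e=B_1\setminus \overline{B_\e}$, letting $\e\to0^+$ at the end.

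\emph{Right-hand side.} Using $\la Ve^w\partial_i w=\la V\partial_i(e^w)$ and integrating by parts gives
$$\la\int_{\partial A_\e}Ve^w\nu_i\,d\sigma-\la\int_{A_\e}\partial_iV\,e^w\,dx .$$
On $\partial B_1$ we have $e^w=1$ and $\nu=x$, which produces $\la\int_{\partial B_1}V x_i\,d\sigma$. On $|x|=\e$ we have $e^w=\e^2e^v=O(\e^2)$, so that contribution vanishes in the limit. The interior term tends to $\la\int_{B_1}|x|^2\partial_iV e^v\,dx$.

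\emph{Left-hand side.} The identity $-\Delta w\,\partial_iw=\mathrm{div}\big(\tfrac12|\nabla w|^2e_i-\partial_iw\nabla w\big)$ reduces it to the boundary terms
$$\int_{\partial A_\e}\Big(\tfrac12|\nabla w|^2\nu_i-\partial_\nu w\,\partial_iw\Big)d\sigma .$$
On $\partial B_1$, since $v=0$ there, $\nabla v=\partial_\nu v\,\nu$, and therefore $\nabla w=(\partial_\nu v+2)\nu$. The integrand becomes $-\tfrac12(\partial_\nu v+2)^2x_i$. Expanding, and using $\int_{\partial B_1}x_i\,d\sigma=0$, gives
$$-\tfrac12\int_{\partial B_1}|\partial_\nu v|^2x_i\,d\sigma-2\int_{\partial B_1}\partial_\nu v\,x_i\,d\sigma .$$
Moving these to the other side produces exactly the three outer boundary terms in the statement.

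\emph{Main obstacle: the inner boundary $|x|=\e$.} Here $\nabla w=\nabla v+2x/|x|^2$, with normal $\nu=-x/\e$. I would expand the integrand into three groups.
\begin{itemize}
\item Pure singular terms, quadratic in $2x/|x|^2$. These are of size $\e^{-2}$ times an odd function $x_i/\e$, so their integral over the circle vanishes by symmetry.
\item Pure regular terms, quadratic in $\nabla v$. These are $O(\e)$ because $\nabla v$ is bounded.
\item Cross terms, linear in both $\nabla v$ and $x/|x|^2$. These are of size $\e^{-1}\cdot|\nabla v|$ on a circle of length $2\pi\e$, so they survive. After replacing $\nabla v(x)$ by $\nabla v(0)+o(1)$, they are evaluated with $\int_{|x|=\e}x_ix_j\,d\sigma=\pi\e^3\delta_{ij}$ and converge to a fixed multiple of $\partial_iv(0)$.
\end{itemize}
I expect bookkeeping of the signs and constants in the cross terms to be the delicate point; with the orientation conventions above it should yield the term $4\pi\,\partial_iv(0)$. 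Collecting all limits as $\e\to0^+$ then gives the identity of Proposition \ref{propo1}.
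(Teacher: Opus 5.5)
Your proposal is correct, and it proves the same identity as the paper by a repackaged route.

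Multiplying by $\partial_i w=\partial_i v+2x_i/|x|^2$ on the annulus is exactly the paper's combination, done in one step. The paper takes twice its identity from the harmonic test function $x_i/|x|^2$, namely \eqref{ple6}, and adds the classical Pohozaev identity from multiplying by $\partial_i v$ on the whole ball.

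\textbf{What each route buys.} The paper keeps the two pieces separate. Then the only inner-boundary analysis is linear in $v$, since the quadratic Pohozaev piece lives on all of $B_1$ and has no inner boundary. The price is that the two identities must be combined at the end. Your substitution $w=v+2\log|x|$ packages everything as the standard translation Pohozaev identity for a regular Liouville equation with a point source, $-\Delta w=\lambda Ve^w-4\pi\delta_0$. This explains where $4\pi\,\partial_iv(0)$ comes from, and where $2\int_{\partial B_1}\partial_\nu v\,x_i\,d\sigma$ comes from (expanding $(\partial_\nu v+2)^2$). The cost is a quadratic inner-boundary computation.

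\textbf{The step you left as bookkeeping.} It does close, and more simply than you anticipated. Take $\nu=-x/\varepsilon$ on $|x|=\varepsilon$.
\begin{itemize}
\item The self-interaction of $2x/|x|^2$ gives the integrand $2x_i/\varepsilon^3$, which integrates to zero.
\item The two cross terms $\pm 2(\nabla v\cdot x)x_i/\varepsilon^3$ cancel exactly.
\item The remaining cross term is $-\partial_\nu(2\log|x|)\,\partial_iv=2\,\partial_iv/\varepsilon$. It integrates to $4\pi\,\partial_iv(0)+o(1)$.
\end{itemize}
So you only need continuity of $\nabla v$ at $0$, not the second-order Taylor expansions the paper uses. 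You also never need $\int_{|x|=\varepsilon}x_ix_j\,d\sigma=\pi\varepsilon^3\delta_{ij}$.
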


\begin{proof}
Let us multiply both sides of equation in \eqref{eq1} by $\frac{x_1}{|x|^{2}}$ and integrate on $B_1\setminus B_\e$ for $\e>0$ sufficiently small:
\beq\label{ple0}\int_{B_1\setminus B_\e}(-\Delta v)\frac{x_1}{|x|^{2}}dx=\la\int_{B_1\setminus B_\e}V(x)e^vx_1 dx .\eeq
Taking into account that $\frac{x_i}{|x|^{2}}$ is a harmonic function for $x\neq 0$, by applying Gauss-Green formula we derive
\beq\label{ple}\int_{B_1\setminus B_\e}(-\Delta v)\frac{x_1}{|x|^{2}}dx= -\int_{\partial B_1\cup \partial B_\e}\frac{\partial v}{\partial \nu}\frac{x_1}{|x|^{2}}d\sigma+ \int_{\partial B_\e}\frac{\partial }{\partial \nu}\bigg(\frac{x_1}{|x|^{2}}\bigg) v\,d\sigma.\eeq
Let us examine separately the boundary integrals over $\partial B_\e$: first
\beq\label{ple1}-\int_{ \partial B_\e}\frac{\partial v}{\partial \nu}\frac{x_1}{|x|^{2}}d\sigma=- \frac{1}{\e^{2}}\int_{ \partial B_\e}\frac{\partial v}{\partial \nu}x_1d\sigma= \frac{1}{\e^{3}}\int_{ \partial B_\e} \bigg(\frac{\partial v}{\partial x_1} x_1+\frac{\partial v}{\partial x_2} x_2\bigg)x_1d\sigma.
\eeq  Since $v$ is two times differentiable  by standard regularity theory, 
we get$$\frac{\partial v}{\partial x_1} x_1+\frac{\partial v}{\partial x_2} x_2=\frac{\partial v}{\partial x_1} (0)x_1+ \frac{\partial v}{\partial x_2} (0)x_2+O(|x|^2).$$
Using that  $\int_{\partial B_\e} x_1x_2d\sigma=0$ by oddness  and $\int_{\partial B_\e} x_i^2d\sigma=\pi\e^{3}$, 
by \eqref{ple1} we deduce  
\beq\label{ple2}-\int_{ \partial B_\e}\frac{\partial v}{\partial \nu}\frac{x_1}{|x|^{2}}d\sigma=\pi\frac{\partial v}{\partial x_1} (0)+o(1)\eeq as $\e\to 0^+$.
Let us pass to examine the second boundary term in \eqref{ple}: 
\beq\label{ple4}\begin{aligned}\int_{\partial B_\e}\frac{\partial }{\partial \nu}\bigg(\frac{x_1}{|x|^{2}}\bigg) v\,d\sigma&=-\frac1\e\int_{\partial B_\e}\bigg(\frac{\partial }{\partial x_1}\bigg(\frac{x_1}{|x|^{2}}\bigg) x_1+ \frac{\partial }{\partial x_2}\bigg(\frac{x_1}{|x|^{2}}\bigg) x_2\bigg)v(x)\,d\sigma
\\ &= \frac{1}{\e^{3}}\int_{\partial B_\e}x_1v(x)d\sigma. 
\end{aligned}\eeq
Since $$x_1v(x)=x_1v(0)+\frac{\partial v}{\partial x_1} (0)x_1^2+ \frac{\partial v}{\partial x_2} (0)x_1x_2+O(|x|^3),$$ by \eqref{ple4} we derive \beq\label{ple5}\begin{aligned}\int_{\partial B_\e}\frac{\partial }{\partial \nu}\bigg(\frac{x_1}{|x|^{2}}\bigg) v\,d\sigma&= \pi\frac{\partial v}{\partial x_1} (0) +o(1)
\end{aligned}\eeq
as $\e\to 0^+$. 
By inserting \eqref{ple2} and \eqref{ple5} into \eqref{ple} we obtain
$$\int_{B_1\setminus B_\e}(-\Delta v)\frac{x_1}{|x|^{2}}dx= 2\pi\frac{\partial v}{\partial x_1} (0)-\int_{\partial B_1}\frac{\partial v}{\partial \nu} x_1d\sigma+o(1)$$
as $\e\to 0^+$ by which, letting $\e$ go to $0^+$ in \eqref{ple0}, we conclude
\beq\label{ple6}\la\int_{B_1}V(x)e^v x_1 dx=2\pi \frac{\partial v}{\partial x_1} (0) -\int_{\partial B_1}\frac{\partial v}{\partial \nu}x_1d\sigma.\eeq

Next, let us multiply both sides of the equation in \eqref{eq1} by $\frac{\partial v}{\partial x_1}$   and integrating on $B_1$ we get
\beq\label{nirv}\int_{B_1}  \nabla v\cdot \nabla \bigg(\frac{\partial v}{\partial x_1}\bigg) dx-\int_{\partial B_1}\frac{\partial v}{\partial \nu}\frac{\partial v}{\partial x_1} d\sigma=\int_{B_1}\la |x|^{2} V(x) e^v\frac{\partial v}{\partial x_1} dx.\eeq
Taking into account that $$\nabla v\cdot \nabla \bigg(\frac{\partial v}{\partial x_1}\bigg) =\frac12\frac{\partial }{\partial x_1}\Big(|\nabla v|^2\Big), \qquad e^v\frac{\partial v}{\partial x_1} =\frac{\partial e^v}{\partial x_1} ,$$ 
after integration by parts  \eqref{nirv} becomes
$$\begin{aligned}&\frac12\int_{\partial B_1}|\nabla v|^2 x_1 d\sigma-\int_{\partial B_1}\frac{\partial v}{\partial \nu}\frac{\partial v}{\partial x_1} d\sigma
\\ &=-2\la\int_{B_1} x_1 V(x) e^vdx-\la\int_{B_1} |x|^{2} \frac{\partial V}{\partial x_1}(x) e^vdx+\la\int_{\partial B_1} V(x) x_i d\sigma\end{aligned}$$
where we have used the homogeneous boundary condition $v=0$ on $\partial B_1$. Since  on $\partial B_1$ one has $\nabla v=\frac{\partial v}{\partial \nu}\,\nu=\frac{\partial v}{\partial \nu}\,x$,   the above identity can be rewritten as 
$$\begin{aligned}&-\frac12\int_{\partial B_1}\bigg|\frac{\partial v}{\partial \nu}\bigg|^2x_1 d\sigma
=-2\la\int_{B_1} x_1 V(x) e^vdx-\la\int_{B_1} |x|^{2} \frac{\partial V}{\partial x_1}(x) e^vdx+\la\int_{\partial B_1} V(x) x_1 d\sigma\end{aligned}$$

Combining the last identity with \eqref{ple6}
 we get the thesis for $i=1$. The identity for $i=2$ can be deduced similarly.
\end{proof}

We are going to provide  two crucial Pohozaev identities satisfied by   solutions of \eqref{eq1}.

\begin{prop}\label{propo2} Any solution  $v$ of \eqref{eq1} satisfies the following two Pohozaev identities:
$$\begin{aligned}\int_{B_1} \la e^{v}&\bigg(\frac{\partial V}{\partial x_1}x_1-\frac{\partial V}{\partial x_2}x_2\bigg) dx \\ &=\la\int_{\partial B_1}  V(x)(x_1^2-x_2^2)d\sigma+\frac12\int_{\partial B_1}\bigg|\frac{\partial v}{\partial \nu}\bigg|^2(x_1^2-x_2^2) d\sigma
-\pi\bigg|\frac{\partial v}{\partial x_1}(0)\bigg|^2+\pi\bigg|\frac{\partial v}{\partial x_2}(0)\bigg|^2,
\end{aligned}$$

$$\begin{aligned}\int_{B_1} \la e^{v}\bigg(\frac{\partial V}{\partial x_1}&x_2+\frac{\partial V}{\partial x_2}x_1\bigg) dx =2\la\int_{\partial B_1}  V(x)x_1x_2d\sigma+\int_{\partial B_1}\bigg|\frac{\partial v}{\partial \nu}\bigg|^2 x_1x_2 d\sigma
-2\pi
\frac{\partial v}{\partial x_1}(0)\frac{\partial v}{\partial x_2}(0),
\end{aligned}$$
where $\nu=(\nu_1,\nu_2)$ stands for the unit outward normal. \end{prop}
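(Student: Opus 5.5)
The plan is to derive both identities from one Pohozaev-type computation with a singular \emph{conformal} vector field, cutting out $B_\e$ as in the proof of Proposition \ref{propo1}. Identify $\R^2\simeq\C$ and set $\xi=(x_1,-x_2)/|x|^2$ for the first identity and $\xi=(x_2,x_1)/|x|^2$ for the second. These correspond to the holomorphic functions $1/z$ and $i/z$ on $\C\setminus\{0\}$. The choice is dictated by two requirements:
\begin{itemize}
\item $\xi$ is conformal, i.e.\ $D\xi$ is a multiple of a rotation, so the quadratic gradient terms cancel;
\item $|x|^2\xi$ is divergence free, since $\operatorname{div}(x_1,-x_2)=\operatorname{div}(x_2,x_1)=0$, so the weight $|x|^2$ does not produce a bulk term.
\end{itemize}
I multiply the equation in \eqref{eq1} by $\xi\cdot\nabla v$ and integrate over $B_1\setminus B_\e$.

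For the left side I use the pointwise identity
$$-\Delta v\,(\xi\cdot\nabla v)=-\operatorname{div}\Big(\nabla v\,(\xi\cdot\nabla v)-\tfrac12|\nabla v|^2\xi\Big)+D\xi(\nabla v,\nabla v)-\tfrac12(\operatorname{div}\xi)|\nabla v|^2 .$$
The last two terms cancel for a conformal field, since the symmetric part of $D\xi$ is $\frac12(\operatorname{div}\xi) I$. For the right side I write $|x|^2Ve^v\,\xi\cdot\nabla v=\operatorname{div}(|x|^2V\xi e^v)-e^v\operatorname{div}(|x|^2V\xi)$. Because $\operatorname{div}(|x|^2\xi)=0$, the bulk term is exactly $-e^v\,\nabla V\cdot(|x|^2\xi)$. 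This gives $-e^v\big(\frac{\partial V}{\partial x_1}x_1-\frac{\partial V}{\partial x_2}x_2\big)$ in the first case and $-e^v\big(\frac{\partial V}{\partial x_1}x_2+\frac{\partial V}{\partial x_2}x_1\big)$ in the second, which are the integrands on the left of the claimed identities.

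Next come the outer boundary terms on $\partial B_1$. There $v=0$, so $\nabla v=\frac{\partial v}{\partial\nu}\,x$ and $e^v=1$. Also $\xi\cdot\nu=x_1^2-x_2^2$ in the first case and $2x_1x_2$ in the second. The divergence terms then collapse to $\frac12\int_{\partial B_1}|\frac{\partial v}{\partial\nu}|^2\,\xi\cdot\nu\,d\sigma$ and $\la\int_{\partial B_1}V\,\xi\cdot\nu\,d\sigma$. These produce the boundary integrals in the statement, including the factor $2$ in the second identity.

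The main step is the inner boundary $\partial B_\e$ as $\e\to0^+$. The nonlinear flux is $\la\int_{\partial B_\e}|x|^2V e^v\,\xi\cdot\nu\,d\sigma=O(\e^2)\to0$, because $|x|^2|\xi|=|x|$. For the gradient flux I Taylor-expand $\nabla v=\nabla v(0)+O(\e)$, using that $v\in C^2$ by elliptic regularity. Since $|\xi|=1/\e$ and the circle has length $2\pi\e$, the $O(\e)$ error contributes $o(1)$. Only the constant vector $a=\nabla v(0)$ survives, and I must compute
$$\int_{\partial B_\e}\Big((a\cdot\nu)(a\cdot\xi)-\tfrac12|a|^2\,\xi\cdot\nu\Big)d\sigma .$$
The term with $\xi\cdot\nu$ integrates to zero by the angular average of $\cos2\theta$ (respectively $\sin2\theta$). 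The first term, with $\nu=x/\e$, equals $\e^{-3}\int_{\partial B_\e}(a\cdot x)(a_1x_1-a_2x_2)\,d\sigma=\pi(a_1^2-a_2^2)$, using $\int_{\partial B_\e}x_i^2\,d\sigma=\pi\e^3$ and the vanishing of $\int_{\partial B_\e}x_1x_2\,d\sigma$. The analogous computation gives $2\pi a_1a_2$ in the second case.

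The delicate point is keeping track of the orientation: $\nu$ points into $B_\e$ on the inner circle, which fixes the signs and produces $-\pi|\frac{\partial v}{\partial x_1}(0)|^2+\pi|\frac{\partial v}{\partial x_2}(0)|^2$ and $-2\pi\frac{\partial v}{\partial x_1}(0)\frac{\partial v}{\partial x_2}(0)$ after moving terms across. Finally I let $\e\to0^+$; the bulk integrals converge because $|x|^2|\xi|=|x|$ is bounded. Rearranging gives both identities. The second one can alternatively be obtained from the first by rotating coordinates by $\pi/4$.
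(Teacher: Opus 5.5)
Your proposal is correct and is essentially the paper's proof. The paper multiplies by the same $\xi\cdot\nabla v$ with $\xi=(x_1,-x_2)/|x|^2$ and $\xi=(x_2,x_1)/|x|^2$ and integrates over $B_1\setminus B_\varepsilon$. It then verifies by direct computation that $\nabla v\cdot\nabla(\xi\cdot\nabla v)=\frac12\operatorname{div}(|\nabla v|^2\xi)$, which is your conformality cancellation done by hand. Next it integrates the right side by parts using that $(x_1,-x_2)$ and $(x_2,x_1)$ are divergence free, and evaluates the inner-circle terms with the inward normal exactly as you do. Your holomorphic-field framing and the $\pi/4$-rotation remark are a tidy repackaging rather than a different route.
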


\begin{proof} Let us multiply  both sides of the equation in \eqref{eq1} by $$\frac{\partial v}{\partial x_1}\frac{x_1}{|x|^{2}}-\frac{\partial v}{\partial x_2}\frac{x_2}{|x|^{2}};$$ using that $v$ belongs to $C^2(\overline{B}_1)$ by standard regularity theory   and integrating on $B_1\setminus B_\e$ for $\e>0$ sufficiently small we get 
\beq\label{leftright}\begin{aligned}\int_{B_1\setminus B_\e} (-\Delta v)\bigg(\frac{\partial v}{\partial x_1}\frac{x_1}{|x|^{2}}&-\frac{\partial v}{\partial x_2}\frac{x_2}{|x|^{2}}\bigg)dx=\la \int_{B_1\setminus B_\e}V(x) e^{v}\bigg(\frac{\partial v}{\partial x_1}x_1-\frac{\partial v}{\partial x_2}x_2\bigg)dx.\end{aligned}\eeq
By applying Gauss Green formula, we have
$$\begin{aligned}&\int_{B_1\setminus B_\e} (-\Delta v)\bigg(\frac{\partial v}{\partial x_1}\frac{x_1}{|x|^{2}}-\frac{\partial v}{\partial x_2}\frac{x_2}{|x|^{2}}\bigg)dx\\ &=
\int_{B_1\setminus B_\e} \nabla v\cdot \nabla\bigg(\frac{\partial v}{\partial x_1}\frac{x_1}{|x|^{2}}-\frac{\partial v}{\partial x_2}\frac{x_2}{|x|^{2}}\bigg)dx-\int_{\partial B_1\cup\partial B_\e }\frac{\partial v}{\partial \nu}\bigg(\frac{\partial v}{\partial x_1}\frac{x_1}{|x|^{2}}-\frac{\partial v}{\partial x_2}\frac{x_2}{|x|^{2}}\bigg)d\sigma .
\end{aligned} $$
By direct computations  we derive
$$\begin{aligned}\nabla v\cdot \nabla \bigg(\frac{\partial v}{\partial x_1}\frac{x_1}{|x|^2}-\frac{\partial v}{\partial x_2}\frac{x_2}{|x|^2}\bigg) &=
\frac12\frac{\partial }{\partial x_1}(|\nabla v|^2)\frac{x_1}{|x|^2}-\frac12\frac{\partial }{\partial x_2}(|\nabla v|^2)\frac{x_2}{|x|^2}-|\nabla v|^2 \frac{x_1^{2}-x_2^2}{|x|^{4}}
\\ &=\frac12\frac{\partial}{\partial x_1}\bigg(|\nabla v|^2 \frac{x_1}{|x|^2}\bigg)-\frac12 \frac{\partial}{\partial x_2}\bigg(|\nabla v|^2 \frac{x_2}{|x|^2}\bigg),
\end{aligned}$$
by which, applying again Gauss-Green formula,
$$\begin{aligned}&\int_{B_1\setminus B_\e} \nabla v\cdot \nabla \bigg(\frac{\partial v}{\partial x_1}\frac{x_1}{|x|^2}-\frac{\partial v}{\partial x_2}\frac{x_2}{|x|^2}\bigg)
 = \int_{\partial B_1\cup\partial B_\e} \frac{|\nabla v|^2}{2} \bigg(\frac{x_1}{|x|^2}\nu_1-\frac{x_2}{|x|^2}\nu_2\bigg) d\sigma
.
\end{aligned}$$
Hence we deduce

\beq\label{csa}\begin{aligned}\int_{B_1\setminus B_\e} (-\Delta v)\bigg(\frac{\partial v}{\partial x_1}\frac{x_1}{|x|^2}-\frac{\partial v}{\partial x_2}\frac{x_2}{|x|^2}\bigg)dx&=
 \int_{\partial B_1\cup\partial B_\e}\frac{ |\nabla v|^2}{2} \bigg(\frac{x_1}{|x|^2}\nu_1-\frac{x_2}{|x|^2}\nu_2\bigg) d\sigma
\\ &\;\;\;\;-\int_{\partial B_1\cup\partial B_\e }\frac{\partial v}{\partial \nu}\bigg(\frac{\partial v}{\partial x_1}\frac{x_1}{|x|^2}-\frac{\partial v}{\partial x_2}\frac{x_2}{|x|^2}\bigg)d\sigma .
\end{aligned}\eeq
Let us examine separately the boundary integrals over $\partial B_1$ and over $\partial B_\e$:
taking into account of the homogeneous boundary condition in problem \eqref{eq1} we have that $\nabla v=\frac{\partial v}{\partial \nu}\,\nu=\frac{\partial v}{\partial \nu}\,x $ on $\partial B_1$, which implies $$\begin{aligned}& \int_{\partial B_1} \frac{|\nabla v|^2 }{2}\bigg(\frac{x_1}{|x|^2}\nu_1-\frac{x_2}{|x|^2}\nu_2\bigg) d\sigma-\int_{\partial B_1}\frac{\partial v}{\partial \nu}\bigg(\frac{\partial v}{\partial x_1}\frac{x_1}{|x|^2}-\frac{\partial v}{\partial x_2}\frac{x_2}{|x|^2}\bigg)d\sigma
=\frac12\int_{\partial B_1}\bigg|\frac{\partial v}{\partial \nu}\bigg|^2 (x_2^2-x_1^2)d\sigma.
\end{aligned}$$
On the other hand,  $\nu=-\frac{x}{\e}$ on $\partial B_\e$; consequently
 $$\begin{aligned}&\int_{\partial B_\e}\frac{ |\nabla v|^2}{2} \bigg(\frac{x_1}{|x|^2}\nu_1-\frac{x_2}{|x|^2}\nu_2\bigg) d\sigma-\int_{\partial B_\e}\frac{\partial v}{\partial \nu}\bigg(\frac{\partial v}{\partial x_1}\frac{x_1}{|x|^2}-\frac{\partial v}{\partial x_2}\frac{x_2}{|x|^2}\bigg)d\sigma
 \\ &=-\frac{1}{\e^{3}} \int_{\partial B_\e} \frac{|\nabla v|^2}{2} \Re(x^2)d\sigma+\frac{1}{\e^{3}} \int_{\partial B_\e} \bigg(\frac{\partial v}{\partial x_1}x_1+\frac{\partial v}{\partial x_2}x_2\bigg)\bigg(\frac{\partial v}{\partial x_1}x_1-\frac{\partial v}{\partial x_2}x_2\bigg)d\sigma
 \\ &
=\frac{1}{2\e}\int_{\partial B_\e}\bigg(\bigg|\frac{\partial v}{\partial x_1}\bigg|^2-\bigg|\frac{\partial v}{\partial x_2}\bigg|^2\bigg)d\sigma
\\ &= \pi \bigg|\frac{\partial v}{\partial x_1}(0)\bigg|^2-\pi\bigg|\frac{\partial v}{\partial x_2}(0)\bigg|^2+o(1)  \end{aligned}$$
as $\e\to 0^+$. 
By inserting the above two boundary estimates into \eqref{csa} we get
\beq\label{csa1}\begin{aligned}\int_{B_1\setminus B_\e} (-\Delta v)\bigg(\frac{\partial v}{\partial x_1}&\frac{x_1}{|x|^2}-\frac{\partial v}{\partial x_2}\frac{x_2}{|x|^2}\bigg)dx\\&=
\frac12 \int_{\partial B_1} \bigg|\frac{\partial  v}{\partial \nu}\bigg|^2 (x_2^2-x_1^2) d\sigma 
+\pi \bigg|\frac{\partial v}{\partial x_1}(0)\bigg|^2-\pi\bigg|\frac{\partial v}{\partial x_2}(0)\bigg|^2+o(1) 
\end{aligned}\eeq as $\e\to 0^+$.

Now let us pass to examine the right hand side of \eqref{leftright}: by integrating by parts we derive
\beq\label{sgre}\begin{aligned}&\int_{B_1} \la  V(x) e^{v}\bigg(\frac{\partial v}{\partial x_1}x_1-\frac{\partial v}{\partial x_2}x_2\bigg)  dx
 \\&=\int_{B_1} \la V(x) \bigg(\frac{\partial e^v}{\partial x_1}x_1-\frac{\partial e^v}{\partial x_2}x_2\bigg)dx\\ &=
-\int_{B_1} \la e^{v}\bigg(\frac{\partial V(x)}{\partial x_1}x_1-\frac{\partial V(x)}{\partial x_2}x_2\bigg) dx
+\int_{\partial B_1}\la  V(x)e^v \Big(x_1\nu_1-x_2\nu_2\Big)d\sigma
\\ &=
-\int_{B_1} \la e^{v}\bigg(\frac{\partial V(x)}{\partial x_1}x_1-\frac{\partial V(x)}{\partial x_2}x_2\bigg) dx
+\int_{\partial B_1}\la  V(x)(x_1^2-x_2^2)d\sigma ,\end{aligned}\eeq  where we have used the homogeneous boundary condition $v=0$ on $\partial B_1$. 
Inserting \eqref{csa1} and \eqref{sgre} into \eqref{leftright} and letting $\e\to 0$,  we obtain the  first of the two Pohozaev identities. 
\

Similarly let us multiply  both sides of the equation in \eqref{eq1} by $$\frac{\partial v}{\partial x_1}\frac{x_2}{|x|^2}+\frac{\partial v}{\partial x_2}\frac{x_1}{|x|^2};$$ integrating on $B_1\setminus B_\e$ for $\e>0$ sufficiently small we get 
\beq\label{leftrightx}\begin{aligned}\int_{B_1\setminus B_\e} (-\Delta v)\bigg(\frac{\partial v}{\partial x_1}\frac{x_2}{|x|^2}&+\frac{\partial v}{\partial x_2}\frac{x_1}{|x|^2}\bigg)dx=\la \int_{B_1\setminus B_\e}V(x) e^{v}\bigg(\frac{\partial v}{\partial x_1}x_2+\frac{\partial v}{\partial x_2}x_1\bigg)dx.\end{aligned}\eeq
Gauss-Green formula gives 
$$\begin{aligned}&\int_{B_1\setminus B_\e} (-\Delta v)\bigg(\frac{\partial v}{\partial x_1}\frac{x_2}{|x|^2}+\frac{\partial v}{\partial x_2}\frac{x_1}{|x|^2}\bigg)dx\\ &=
\int_{B_1\setminus B_\e} \nabla v\cdot \nabla\bigg(\frac{\partial v}{\partial x_1}\frac{x_2}{|x|^2}+\frac{\partial v}{\partial x_2}\frac{x_1}{|x|^2}\bigg)dx-\int_{\partial B_1\cup\partial B_\e }\frac{\partial v}{\partial \nu}\bigg(\frac{\partial v}{\partial x_1}\frac{x_2}{|x|^2}+\frac{\partial v}{\partial x_2}\frac{x_1}{|x|^2}\bigg)d\sigma .
\end{aligned} $$
We compute
$$\begin{aligned}\nabla v\cdot \nabla \bigg(\frac{\partial v}{\partial x_1}\frac{x_2}{|x|^2}+\frac{\partial v}{\partial x_2}\frac{x_1}{|x|^2}\bigg) &=
\frac12\frac{\partial }{\partial x_1}(|\nabla v|^2)\frac{x_2}{|x|^2}+\frac12\frac{\partial }{\partial x_2}(|\nabla v|^2)\frac{x_1}{|x|^2}-|\nabla v|^2 \frac{2x_1x_2}{|x|^{4}}
\\ &=\frac12\frac{\partial}{\partial x_1}\bigg(|\nabla v|^2 \frac{x_2}{|x|^2}\bigg)+\frac12 \frac{\partial}{\partial x_2}\bigg(|\nabla v|^2 \frac{x_1}{|x|^2}\bigg),
\end{aligned}$$
which implies, by using  again Gauss-Green formula,
$$\begin{aligned}&\int_{B_1\setminus B_\e} \nabla v\cdot \nabla \bigg(\frac{\partial v}{\partial x_1}\frac{x_2}{|x|^2}+\frac{\partial v}{\partial x_2}\frac{x_1}{|x|^2}\bigg)
 = \int_{\partial B_1\cup\partial B_\e} \frac{|\nabla v|^2 }{2}\bigg(\frac{x_2}{|x|^2}\nu_1+\frac{x_1}{|x|^2}\nu_2\bigg) d\sigma
.
\end{aligned}$$
Therefore we rewrite the left hand side of \eqref{leftrightx} as

\beq\label{csax}\begin{aligned}\int_{B_1\setminus B_\e} (-\Delta v)\bigg(\frac{\partial v}{\partial x_1}\frac{x_2}{|x|^2}+\frac{\partial v}{\partial x_2}\frac{x_1}{|x|^2}\bigg)dx&=
 \int_{\partial B_1\cup\partial B_\e} \frac{|\nabla v|^2}{2} \bigg(\frac{x_2}{|x|^2}\nu_1+\frac{x_1}{|x|^2}\nu_2\bigg) d\sigma
\\ &\;\;\;\;-\int_{\partial B_1\cup\partial B_\e }\frac{\partial v}{\partial \nu}\bigg(\frac{\partial v}{\partial x_1}\frac{x_2}{|x|^2}+\frac{\partial v}{\partial x_2}\frac{x_1}{|x|^2}\bigg)d\sigma .
\end{aligned}\eeq
Let us focus on  the boundary terms:
the homogeneous boundary condition in \eqref{eq1} yields  $\nabla v=\frac{\partial v}{\partial \nu}\,\nu=\frac{\partial v}{\partial \nu}\,x $ on $\partial B_1$; accordingly $$\begin{aligned}& \int_{\partial B_1} \frac{|\nabla v|^2}{2} \bigg(\frac{x_2}{|x|^2}\nu_1+\frac{x_1}{|x|^2}\nu_2\bigg) d\sigma-\int_{\partial B_1}\frac{\partial v}{\partial \nu}\bigg(\frac{\partial v}{\partial x_1}\frac{x_2}{|x|^2}+\frac{\partial v}{\partial x_2}\frac{x_1}{|x|^2}\bigg)d\sigma
=-\int_{\partial B_1}\bigg|\frac{\partial v}{\partial \nu}\bigg|^2 x_1x_2d\sigma.
\end{aligned}$$
Since $\nu=-\frac{x}{|x|}$ on $\partial B_\e$, we employ the integrals over $\partial B_\e$ as follows 
 $$\begin{aligned}& \int_{\partial B_\e} \frac{|\nabla v|^2}{2} \bigg(\frac{x_2}{|x|^2}\nu_1+\frac{x_1}{|x|^2}\nu_2\bigg) d\sigma-\int_{\partial B_\e}\frac{\partial v}{\partial \nu}\bigg(\frac{\partial v}{\partial x_1}\frac{x_2}{|x|^2}+\frac{\partial v}{\partial x_2}\frac{x_1}{|x|^2}\bigg)d\sigma
 \\ &=-\frac{1}{\e^{3}} \int_{\partial B_\e} |\nabla v|^2 x_1x_2 +\frac{1}{\e^{3}} \int_{\partial B_\e} \bigg(\frac{\partial v}{\partial x_1}x_1+\frac{\partial v}{\partial x_2}x_2\bigg)\bigg(\frac{\partial v}{\partial x_1}x_2+\frac{\partial v}{\partial x_2}x_1\bigg)d\sigma
 \\ &
=
\frac{1}{\e}\int_{\partial B_\e}\frac{\partial v}{\partial x_1}\frac{\partial v}{\partial x_2}d\sigma
\\ &= 2\pi\frac{\partial v}{\partial x_1}(0)\frac{\partial v}{\partial x_2}(0)+o(1)
  \end{aligned}$$ as $\e\to 0^+$.
 By combining the above two boundary estimates with \eqref{csax} we get
\beq\label{csa1x}\begin{aligned}\int_{B_1\setminus B_\e} (-\Delta v)\bigg(&\frac{\partial v}{\partial x_1}\frac{x_2}{|x|^2}+\frac{\partial v}{\partial x_2}\frac{x_1}{|x|^2}\bigg)dx\\ &=
-\int_{\partial B_1} \bigg|\frac{\partial  v}{\partial \nu}\bigg|^2 x_1x_2 d\sigma+ 2\pi\frac{\partial v}{\partial x_1}(0)\frac{\partial v}{\partial x_2}(0)+o(1)\\ &\;\;\;\;
\end{aligned}\eeq as $\e\to 0^+$.

Next we estimate the right hand side of \eqref{leftrightx} similarly as in \eqref{sgre}: \beq\label{sgrex}\begin{aligned}&\int_{B_1} \la  V(x) e^{v}\bigg(\frac{\partial v}{\partial x_1}x_2+\frac{\partial v}{\partial x_2}x_1\bigg)  dx
\\&=\int_{B_1} \la V(x) \bigg(\frac{\partial e^v}{\partial x_1}x_2+\frac{\partial e^v}{\partial x_2}x_1\bigg)dx\\ &=
-\int_{B_1} \la e^{v}\bigg(\frac{\partial V(x)}{\partial x_1}x_2+\frac{\partial V(x)}{\partial x_2}x_1\bigg) dx
+\int_{\partial B_1}\la  V(x)e^v \Big(x_2\nu_1+x_1\nu_2\Big)d\sigma
\\ &=
-\int_{B_1} \la e^{v}\bigg(\frac{\partial V(x)}{\partial x_1}x_2+\frac{\partial V(x)}{\partial x_2}x_1\bigg) dx
+2\int_{\partial B_1}\la  V(x) x_1x_2d\sigma.
\end{aligned}\eeq 
Inserting \eqref{csa1x} and \eqref{sgrex} into \eqref{leftrightx} and letting $\e\to 0$, we find the second Pohozaev identity, which concludes the proof. \end{proof}

\

\subsection{{\large{Uniform behaviour of blowing up solutions}}}
In this section, we derive the asymptotic behaviour of blow-up solutions $v_k$ for the problem \eqref{eq0}--\eqref{eqq0}. This result  is a direct consequence of a uniform estimate established in \cite{bara2}  for bubbling solutions to the singular Liouville equation without boundary conditions and standard elliptic estimates theory.
Let $G$ denote the Green's function which  takes for the unit ball the form $$G(x,y)=\frac{1}{2\pi}\log\frac{1}{|x-y|}+\frac{1}{4\pi}\log\big(1+|x|^2|y|^2-2 \langle x,y\rangle \big)\hbox{ for }x,y\in B_1,$$  where $\langle\cdot,\cdot\rangle$ denotes the scalar product in $\R^2$.  

\begin{prop}\label{lililili} Let  the potential $V > 0$ belong to the class $C^1(\overline{B}_1)$ and let  $v_k\in C(\overline{B}_1)$ be a sequence of blow-up solutions  to problem  \eqref{eq0} satisfying \eqref{eqq0}. Then, 
along a subsequence
\beq\label{cuccu}\la_k|x|^2V(x)e^{v_k}\rightharpoonup 16\pi  {\bbm[\de]}_0\eeq
weakly in the measure sense, where ${\bbm[\de]}_0$ denotes the Dirac delta measure concentrated at the origin. Moreover,  
$$
v_k\longrightarrow 16\pi G(x,0)=8\log\frac{1}{|x|}
$$
in $C^1(K)$ for every compact set $K \subset \overline{B_1}\setminus\{0\}$.
\end{prop}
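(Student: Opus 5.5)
We would argue in three steps: first establish convergence of $v_k$ away from the origin, then identify the limiting mass as $16\pi$ by blow-up analysis at $0$, and finally pass to the limit in the Green representation formula.

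\emph{Step 1 (weak convergence of the measures).} Set $\mu_k:=\la_k|x|^2V(x)e^{v_k}$. By the mass bound in \eqref{eq0}, along a subsequence $\mu_k\rightharpoonup\mu$ weakly as measures on $\overline B_1$. By \eqref{eqq0}, $v_k$ is bounded above on $\{\e\le|x|\le1\}$ for every $\e>0$, so $\mu_k\le C(\e)\la_k\to0$ uniformly there. Hence $\mu=m\,\bbm[\de]_0$ for some $m\ge0$.

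\emph{Step 2 (convergence away from $0$).} Write
$$v_k(x)=\int_{B_1}G(x,y)\,d\mu_k(y).$$
For $x$ in a compact $K\subset\overline B_1\setminus\{0\}$, split the integral into $|y|<\e$ and $|y|\ge\e$. The inner part converges in $C^1(K)$ to $mG(x,0)$, because $G(\cdot,y)$ is smooth in $x\in K$ uniformly for $|y|<\e$, and $G(x,y)\to G(x,0)$ as $y\to0$ uniformly in $C^1(K)$. The outer part has density bounded by $C(\e)\la_k\to0$, so by standard elliptic estimates it tends to $0$ in $C^1(\overline B_1)$. Thus $v_k\to mG(\cdot,0)=\frac{m}{2\pi}\log\frac1{|x|}$ in $C^1(K)$.

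\emph{Step 3 (the mass is $16\pi$).} This is the main obstacle. Since $\max v_k\to+\infty$, we have $m>0$: otherwise $v_k$ would be bounded near $0$ by a Brezis--Merle argument, since small mass gives $e^{v_k}$ bounded in some $L^p$. We then use the known quantization for singular Liouville bubbling with $|x|^{2N}$, $N=1$: the local mass at a blow-up point equals $8\pi(1+N)=16\pi$. This holds in both the simple and the non-simple case, by the works of Bartolucci--Tarantello and Kuo--Lin, and is encoded in the uniform estimate of \cite{bara2}, which we take as given. Concretely, after rescaling by $\la_k$ and the maximum, $v_k$ converges locally to a solution $U_{\tau,b}$ of \eqref{limit}, whose mass is $\int|x|^2e^{U}=16\pi$. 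The estimate in \cite{bara2} rules out any residual mass in the neck region.

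As a consistency check, not needed if \cite{bara2} is used directly, one can verify $m=16\pi$ with the Pohozaev identity. Multiply the equation by $x\cdot\nabla v_k$ on $B_r$ and let $k\to\infty$ using the $C^1$ convergence on $\partial B_r$ from Step 2. The left-hand side tends to $\pi r^2\,|\nabla(mG)|^2 r\cdot\frac1r\cdot\ldots=\frac{m^2}{4\pi}$. The right-hand side equals $-\int(4+x\cdot\nabla V/V)\,d\mu_k+o_r(1)\to 4m$ as $r\to0$. This gives $m^2/(4\pi)=4m$, so $m=16\pi$. Therefore $v_k\to8\log\frac1{|x|}$ in $C^1(K)$, which completes the argument.
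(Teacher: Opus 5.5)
Your proposal is correct, and its skeleton matches what the paper intends: a weak limit $m\delta_0$ of $\mu_k$, Green's representation plus elliptic estimates for $C^1$ convergence on compacts away from $0$, and a quantization input for $m$. The paper gives no separate argument; it simply calls the proposition a direct consequence of the uniform estimate of \cite{bara2} and standard elliptic theory.

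Where you genuinely differ, and improve on the paper, is the Pohozaev computation you relegate to a ``consistency check''. It is in fact a complete proof that $m=16\pi$, and it makes the appeal to \cite{bara2} (and to Bartolucci--Tarantello/Kuo--Lin) unnecessary. Once Step 2 gives $v_k\to\frac{m}{2\pi}\log\frac1{|x|}$ in $C^1$ near $\partial B_r$, the identity on $B_r$ passes to the limit for each fixed $r$, with no need to send $r\to0$. The left side tends to $-\frac{m^2}{4\pi}$ and the right side to $-4m$. The boundary term $r^3\la_k\int_{\partial B_r}Ve^{v_k}\,d\sigma$ vanishes because $v_k$ is bounded there and $\la_k\to0$, and $4+x\cdot\nabla V/V$ is continuous and equals $4$ at $0$. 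With $m>0$ from your Brezis--Merle step, this forces $m=16\pi$ using only $V\in C^1$. The citation route buys a full pointwise description of the bubbling profile, which is far more than this statement needs; your route is elementary and self-contained.

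Two clean-ups are needed. First, your displayed value of the left side is garbled, and both sides carry the wrong sign; this is harmless since the signs cancel. Second, the sentence asserting that the rescaled $v_k$ converge to some $U_{\tau,b}$ is only valid under simple blow-up. In the non-simple case the rescaling around the two maxima produces two regular bubbles of mass $8\pi$ each, so that sentence should not be offered as the mechanism behind the quantization.
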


\

\subsection{{\large{Proof of Theorem \ref{th1} Part I: The necessary condition} }}
In this section, we exploit the integral identities obtained in Propositions \ref{propo1}-\ref{propo2} in order to derive necessary conditions for  the presence of bubbling solutions at $0$  for problem \eqref{eq0}--\eqref{eqq0}.

In the following we assume that  $v_k$ is a sequence satisfying \eqref{eq0}--\eqref{eqq0} and the potential $V$ verifies \eqref{eqq1}--\eqref{eqq2}.

We first establish a vanishing estimate for the gradient. 
\begin{prop}\label{necessary} The following holds: $$\frac{\partial v_k}{\partial x_i}(0)=o(1) ,\quad i=1,2$$
as $k\to +\infty$.

\end{prop}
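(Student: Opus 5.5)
Apply Proposition \ref{propo1} to $v=v_k$, $\la=\la_k$, and pass to the limit $k\to+\infty$ term by term. The identity expresses $4\pi\,\partial_{x_i}v_k(0)$ as a combination of boundary integrals on $\partial B_1$ and the interior integral $\la_k\int_{B_1}|x|^2\partial_{x_i}V e^{v_k}dx$. The plan is to show that every term other than $4\pi\,\partial_{x_i}v_k(0)$ converges to $0$, which forces $\partial_{x_i}v_k(0)=o(1)$.

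\textbf{Boundary terms.} By Proposition \ref{lililili}, $v_k\to 8\log\frac1{|x|}$ in $C^1$ near $\partial B_1$. Hence $\frac{\partial v_k}{\partial\nu}\to -8$ uniformly on $\partial B_1$. Therefore
\[
\frac12\int_{\partial B_1}\Big|\frac{\partial v_k}{\partial \nu}\Big|^2x_i\,d\sigma\to 32\int_{\partial B_1}x_i\,d\sigma=0,
\qquad
2\int_{\partial B_1}\frac{\partial v_k}{\partial\nu}x_i\,d\sigma\to -16\int_{\partial B_1}x_i\,d\sigma=0 .
\]
For the remaining boundary term, $\la_k\int_{\partial B_1}Vx_i\,d\sigma=O(\la_k)\to0$, since $\la_k\to0$. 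This step is where the choice $\Omega=B_1$ matters: the odd symmetry of $x_i$ on the circle kills the limiting boundary contributions.

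\textbf{Interior term.} Write
\[
\la_k\int_{B_1}|x|^2\partial_{x_i}V\,e^{v_k}dx=\int_{B_1}\frac{\partial_{x_i}V}{V}\,\big(\la_k|x|^2Ve^{v_k}\big)\,dx .
\]
The function $\partial_{x_i}V/V$ is continuous on $\overline B_1$, because $V\in C^3$ and $\min V>0$, and it vanishes at $0$ by \eqref{eqq2}. The measures $\la_k|x|^2Ve^{v_k}dx$ are nonnegative with uniformly bounded mass, and by \eqref{cuccu} they converge weakly to $16\pi\bbm[\de]_0$. The one point to check is that weak convergence holds when testing against functions continuous on the closed ball, so that no mass escapes to $\partial B_1$. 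This follows because $v_k$ is uniformly bounded on $\{\e\le|x|\le1\}$ by \eqref{eqq0}, so $\la_k|x|^2Ve^{v_k}\to0$ uniformly there. Consequently the interior integral converges to $16\pi\,\frac{\partial_{x_i}V(0)}{V(0)}=0$.

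Combining the three limits in Proposition \ref{propo1} gives $4\pi\,\partial_{x_i}v_k(0)\to0$, which is the claim. There is no real obstacle beyond confirming the $C^1$ convergence at the outer boundary supplied by Proposition \ref{lililili}.
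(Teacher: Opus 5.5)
Your proposal is correct and follows the paper's proof: apply Proposition \ref{propo1} to $v_k$, kill the $\partial B_1$ terms using $\partial_\nu v_k\to-8$ uniformly and $\int_{\partial B_1}x_i\,d\sigma=0$, and send the interior term to $0$ via \eqref{cuccu} together with $\nabla V(0)=0$. Your additional checks make explicit two points the paper leaves implicit: that the term $\la_k\int_{\partial B_1}Vx_i\,d\sigma$ is $O(\la_k)$, and that no mass escapes to $\partial B_1$, so the weak convergence can be tested against functions continuous on $\overline B_1$.
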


\begin{proof} According to Proposition \ref{lililili} 
$$
v_k \longrightarrow 8\log\frac{1}{|x|}\quad \text{in } C^1(\partial B_1).
$$

Then we deduce  that
\beq\label{beda}\frac{\partial v_k}{\partial \nu}(x)\to  -8\hbox{ unif. for } x\in\partial B_1.\eeq
We have thus proved that 
$$\begin{aligned}\int_{\partial B_1} \bigg|\frac{\partial v_k}{\partial \nu}\bigg|^2x_i d\sigma&=64  \int_{\partial B_1}x_i+o(1)=o(1).
\end{aligned}
$$
Similarly
$$\begin{aligned}\int_{\partial B_1}\frac{\partial v_k}{\partial \nu}x_i d\sigma&=-8 \int_{\partial B_1}x_i d\sigma+o(1)=o(1).
\end{aligned}
$$
Moreover, by \eqref{cuccu} and using that $\nabla V(0)=0$ (see \eqref{eqq1}), we have
$$
 \la_k\int_{B_1} |x|^{2} \frac{\partial V}{\partial x_i}(x) e^{v_k}dx=o(1). $$ 
By inserting the above estimates into the identity provided by Proposition \ref{propo1} we deduce that $\frac{\partial v_k}{\partial x_i}(0)=o(1) $ for $i=1,2$.
\end{proof}

\

The  next result is a further consequence of the Pohozaev identities established in Proposition \ref{propo2} and, combined with the  previous proposition,  provides   alternative integral estimates for  blow-up solutions.  
\begin{prop}\label{necessary1} The following estimates hold:

 \beq\label{medio2}\begin{aligned}& \la_k\int_{B_1} e^{v_k}\bigg(\frac{\partial V}{\partial x_1}x_1-\frac{\partial V}{\partial x_2}x_2\bigg) dx  
=o(1),\end{aligned}\eeq

\beq\label{medio4}\begin{aligned}& \la_k\int_{B_1} e^{v_k}\bigg(\frac{\partial V}{\partial x_1}x_2+\frac{\partial V}{\partial x_2}x_1\bigg) dx  
=o(1) \end{aligned}\eeq as $k\to +\infty$.

\end{prop}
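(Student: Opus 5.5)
We plug $v=v_k$, $\la=\la_k$ into the two identities of Proposition \ref{propo2} and show that every term on the right-hand side is $o(1)$ as $k\to+\infty$. The left-hand sides are exactly the integrals in \eqref{medio2} and \eqref{medio4}, so this gives the claim.

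\emph{Boundary terms.} By Proposition \ref{lililili}, $v_k\to 8\log\frac1{|x|}$ in $C^1$ near $\partial B_1$. As in the proof of Proposition \ref{necessary}, this gives \eqref{beda}: $\frac{\partial v_k}{\partial\nu}\to-8$ uniformly on $\partial B_1$. Hence
$$\frac12\int_{\partial B_1}\Big|\frac{\partial v_k}{\partial\nu}\Big|^2(x_1^2-x_2^2)\,d\sigma=32\int_{\partial B_1}(x_1^2-x_2^2)\,d\sigma+o(1)=o(1),$$
because $\int_{\partial B_1}x_1^2\,d\sigma=\int_{\partial B_1}x_2^2\,d\sigma=\pi$ by symmetry. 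In the same way,
$$\int_{\partial B_1}\Big|\frac{\partial v_k}{\partial\nu}\Big|^2x_1x_2\,d\sigma=64\int_{\partial B_1}x_1x_2\,d\sigma+o(1)=o(1),$$
since $x_1x_2$ is odd. The terms $\la_k\int_{\partial B_1}V(x_1^2-x_2^2)\,d\sigma$ and $2\la_k\int_{\partial B_1}Vx_1x_2\,d\sigma$ are $O(\la_k)$, since $V$ is bounded on $\overline B_1$. So they are $o(1)$.

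\emph{Terms at the origin.} Proposition \ref{necessary} gives $\frac{\partial v_k}{\partial x_i}(0)=o(1)$ for $i=1,2$. Therefore
$$-\pi\Big|\frac{\partial v_k}{\partial x_1}(0)\Big|^2+\pi\Big|\frac{\partial v_k}{\partial x_2}(0)\Big|^2=o(1),\qquad -2\pi\,\frac{\partial v_k}{\partial x_1}(0)\,\frac{\partial v_k}{\partial x_2}(0)=o(1).$$
Collecting these estimates gives \eqref{medio2} and \eqref{medio4}.

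No step is a genuine obstacle, since the delicate inputs (boundary convergence and gradient vanishing at $0$) are already available. The one point to handle carefully is the boundary flux: we need the uniform convergence \eqref{beda} of $\frac{\partial v_k}{\partial\nu}$ on $\partial B_1$, and we use it exactly as in Proposition \ref{necessary}.
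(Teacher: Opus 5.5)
Your proposal is correct and is essentially the paper's own proof. You substitute $v=v_k$, $\la=\la_k$ into the two identities of Proposition~\ref{propo2} and show each right-hand term is $o(1)$:
\begin{itemize}
\item the flux terms on $\partial B_1$ vanish in the limit by the uniform convergence \eqref{beda}, since $\int_{\partial B_1}(x_1^2-x_2^2)\,d\sigma=\int_{\partial B_1}x_1x_2\,d\sigma=0$;
\item the $V$-boundary terms are $O(\la_k)$;
\item the terms at the origin are $o(1)$ by Proposition~\ref{necessary}.
\end{itemize}
The paper does exactly this: it writes out the first identity and treats the second analogously.
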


\begin{proof} 
Using the convergence \eqref{beda} as $k\to +\infty$ we get 

$$\int_{\partial B_1}\bigg|\frac{\partial v_k}{\partial \nu}\bigg|^2 (x_1^2-x_2^2) d\sigma=64\int_{\partial B_1} (x_1^2-x_2^2) d\sigma+o(1)=o(1)$$
thanks to $\int_{\partial B_1}(x_1^2-x_2^2)d\sigma=0$. On the other hand  $$\la_k\int_{\partial B_1}V(x)(x_1^2-x_2^2)d\sigma=O(\la_k)=o(1).$$  Inserting  the above two  estimates into the first Pohozaev identity of  Proposition \ref{propo2} written for $\la=\la_k$ and $v=v_k$ we find $$\int_{B_1} \la_k e^{v_k}\bigg(\frac{\partial V}{\partial x_1}x_1-\frac{\partial V}{\partial x_2}x_2\bigg) dx  =-\pi\bigg|\frac{\partial v_k}{\partial x_1}(0)\bigg|^2+\pi\bigg|\frac{\partial v_k}{\partial x_2}(0)\bigg|^2 .$$ Combining this identity with Proposition~\ref{necessary}, we deduce
\eqref{medio2}.

The proof of \eqref{medio4} follows analogously from the second Pohozaev
identity in Proposition~\ref{propo2}.

\end{proof}

\

Now we are in the position to give the proof of the first part of Theorem \ref{th1}, concerning the necessary condition.

According to the expansion \eqref{rmmm},  $$\frac{\partial V}{\partial x_1}(x)x_1-\frac{\partial V}{\partial x_2}(x)x_2= \gamma_1 x_1^2-\gamma_2 x_2^2 +O(|x|^3),$$ and
 $$\frac{\partial V}{\partial x_1}(x)x_2+\frac{\partial V}{\partial x_2}(x)x_2= (\gamma_1+\gamma_2  )x_1x_2 +O(|x|^3),$$
by which, using Proposition \ref{lililili},  we derive 
$$  \la_k\int_{B_1} e^{v_k}\bigg(\frac{\partial V(x)}{\partial x_1}x_1-\frac{\partial V(x)}{\partial x_2}x_2\bigg) dx=\la_k\int_{B_1}(\gamma_1 x_1^2-\gamma_2 x_2^2) e^{v_k}dx+o(1),$$
$$  \la_k\int_{B_1} e^{v_k}\bigg(\frac{\partial V(x)}{\partial x_1}x_2+\frac{\partial V(x)}{\partial x_2}x_2\bigg) dx=\la_k(\gamma_1+\gamma_2 )\int_{B_1}x_1x_2e^{v_k}dx+o(1).$$
Therefore Proposition \ref{necessary1} applies and yields\beq\label{exx}\la_k\int_{B_1}(\gamma_1 x_1^2-\gamma_2 x_2^2) e^{v_k}dx=o(1),\eeq
 \beq\label{exx1}\la_k(\gamma_1+\gamma_2 )\int_{B_1}x_1x_2e^{v_k}dx =o(1).\eeq

\medskip

We claim that 
\beq\label{lapla}\gamma_1+\gamma_2 \neq 0.\eeq Otherwise, by Proposition \ref{lililili} we would obtain 
$$\la_k\int_{B_1}(\gamma_1 x_1^2-\gamma_2 x_2^2) e^{v_k(x)}dx=\gamma_1 \la_k\int_{B_1}|x|^2 e^{v_k} dx =16\pi \gamma_1  +o(1).$$
Combining this estimate with \eqref{exx}, we would deduce $\gamma_1=\gamma_2=0$, which contradicts the nondegeneracy at the critical point $0$. 

Hence \eqref{lapla} holds, or, equivalently, $\Delta V(0)\neq 0$, which implies, by \eqref{exx1}, \beq\label{sera} \into \Im(x^2) e^{v_k} dx=2\into x_1x_2 e^{v_k} dx=o(1).\eeq Moreover, according to Theorem \ref{thweizhang},   the sequence $v_k$ must satisfy the simple blow-up property \eqref{simple}. 
In this case, following the approach in   \cite{gluck2012} and \cite{zhang2006}, one can perform  a refined analysis of blow-up solutions based on  Green’s representation formula and maximum principle argument,  in order to sharpen the result given in Proposition \ref{lililili}. 
 In particular one can prove that the solution
$v_k$ differs from a sequence of suitably scaled  “standard bubbles” \eqref{bubble} only by an 
$o(1)$  term.
More precisely, let $\beta_k\in B_1$ be the maximum point of $v_k$, namely: $$\beta_k\to 0 \, : \;v_k(\beta_k)=\max_{B_1} v_k(x)\to +\infty,$$
then Proposition \ref{lililili} can be improved to the following uniform estimate: 
$$v_k(x)= \log \frac{e^{v_k(\beta_k)}}{(1+ \frac{\la_k V(\beta_k)}{32 }e^{v_k(\beta_k)}|x^{2}-b_k|^2)^2}+o(1)\quad \hbox{ in }B_1,$$
 where $b_k=\beta_k^{2}$, or, equivalently,

\beq\label{cucucucucu}v_k(x)= \log \frac{32\tau_k}{(1+ \tau_k|x^{2}-b_k|^2)^2}-\log(\la_kV(\beta_k))+o(1)\quad \hbox{ in }B_1,\eeq
where\footnote{We use the notation $\sim$ to denote quantities that are of the same order as  $\la\to 0^+$. } $$\tau_k:=\frac{\la_k V(\beta_k)}{32}e^{v_k(\beta_k)}\sim\frac{1}{\la_k}$$ by evaluating \eqref{cucucucucu} for $x\in\partial B_1$ (where $v_k(x)=0$). Moreover, according to \eqref{simple},
$v_k(\beta_k)+2\log|b_k|+\log\la_k\leq C$ or, equivalently, $$\la_k|b_k|^2e^{v_k(\beta_k)}=O(1)$$ which implies, up to a subsequence,  $$\tau_k^{1/2}b_k\to \xi$$ for some $\xi=(\xi_1,\xi_2)\in\R^2$.

Hence, \eqref{cuccu}, \eqref{exx} and  \eqref{sera} can be rewritten as:
\beq\label{sera0} \tau_k\into   \frac{|x|^2}{(1+ \tau_k|x^{2}-b_k|^2)^2}  dx=\frac{\pi}{2}+o(1),\eeq 
 \beq\label{sera1}\tau_k\into   \frac{\gamma_1 x_1^2-\gamma_2 x_2^2}{(1+ \tau_k|x^{2}-b_k|^2)^2}  dx=o(1),\qquad\tau_k\into   \frac{\Im(x^2)}{(1+ \tau_k|x^{2}-b_k|^2)^2}  dx=o(1).\eeq
 On the other hand, by using first Remark \ref{remcopycopy} and next applying the change of variable  $z=\tau_k^{1/2} y$
$$\begin{aligned}\tau_k\into   \frac{\Im(x^2)}{(1+ \tau_k|x^{2}-b_k|^2)^2}  dx&=\frac{\tau_k}{2}\into  \frac{y_2}{|y|} \frac{dy}{(1+ \tau_k| y-b_k|^2)^2}  = \frac{1}{2}\int_{|z|\leq \tau_k^{\frac12}}  \frac{z_2}{|z|} \frac{dz}{(1+ | z-\tau_k^{1/2} b_k|^2)^2}  \\ &=\frac12\int_{\R^2}\frac{z_2}{|z|} \frac{dz}{(1+ | z-\xi|^2)^2}  +o(1)\end{aligned} $$
which implies by \eqref{sera1}

$$\int_{\R^2}\frac{z_2}{|z|} \frac{1}{(1+ | z-\xi|^2)^2}  dz=0$$
and, by Lemma \ref{signlemma}, $\xi_2=0$. Similarly
$$\begin{aligned}\tau_k\into   \frac{\Re(x^2)}{(1+ \tau_k|x^{2}-b_k|^2)^2}  dx
=\frac12\int_{\R^2}\frac{z_1}{|z|} \frac{1}{(1+ | z-(\xi_1,0)|^2)^2}  dz+o(1)\end{aligned} $$
Taking into account that   $\gamma_1 x_1^2-\gamma_2 x_2^2=\frac{\gamma_1-\gamma_2 }{2}|x|^2+\frac{\gamma_1+\gamma_2 }{2}\Re(x^2)$, by \eqref{sera0} we deduce
$$\begin{aligned}\tau_k\into   \frac{\gamma_1 x_1^2-\gamma_2 x_2^2}{(1+ \tau_k|x^{2}-b_k|^2)^2}  dx&=\frac\pi4(\gamma_1-\gamma_2 )+\frac{\gamma_1+\gamma_2 }{4}\int_{\R^2}\frac{z_1}{|z|} \frac{1}{(1+ | z-(\xi_1,0)|^2)^2}  dz+o(1)
\end{aligned}$$
which implies, by \eqref{sera1}
$$\pi(\gamma_1-\gamma_2 )+(\gamma_1+\gamma_2 )\int_{\R^2}\frac{z_1}{|z|} \frac{1}{(1+ | z-(\xi_1,0)|^2)^2}  dz=0.$$ Lemma \ref{signlemma} applies and gives $$\frac{\gamma_1^2-\gamma_2^2}{(\gamma_1+\gamma_2)^2}=\frac{\gamma_1-\gamma_2}{\gamma_1+\gamma_2}=-\frac{1}{\pi} I(\xi_1,0)\in (-1,1),$$ by which we derive $\gamma_1\cdot\gamma_2>0$;
moreover $$\gamma_1^2>\gamma_2^2 \Longleftrightarrow \xi_1<0,\quad \gamma_1^2<\gamma_2^2\Longleftrightarrow \xi_1>0,\quad \gamma_1 =\gamma_2 \Longleftrightarrow \xi_1=0.$$
\medskip

That proves the necessary condition stated in Theorem \ref{th1}.

\

 \begin{rmk}\label{remcopycopy} Let  $f:\R^2\to\R$ be such that $|x|^2f(x^2)\in L^1(\R^2)$. 
Then, using  the polar coordinates $(\rho,\theta)$ and then applying the change of variables $(\rho',\theta')=(\rho^{2},2\theta)$, we find the identity
$$\begin{aligned} \intr |x|^{2} f(x^{2}) dx&=\int_0^{+\infty} d\rho\int_0^{2\pi} \rho^{3}f(\rho^{2} e^{2{\rm i} \theta}) d\theta
\\ &= \frac14\int_0^{+\infty} d\rho'\int_0^{4 \pi}  \rho'f(\rho' e^{{\rm i}\theta'}) d\theta' \\ &=\frac{1}{2}\int_0^{+\infty} d\rho'\int_0^{2\pi}  \rho'f(\rho' e^{{\rm i}\theta'}) d\theta'
\\& =\frac{1}{2}\intr f(y) dy.\end{aligned}$$

\end{rmk}

\begin{lemma}\label{signlemma}
Let $\xi=(\xi_1,\xi_2)\in\R^2$ and define
$$
I(\xi):=\int_{\R^2}\frac{z_1}{|z|}\,\frac{1}{(1+|z-\xi|^2)^2}\,dz.
$$
Then $I\in C^2(\R),$ $I$ is odd and 
$$
I(\xi)=0 \quad \text{if and only if} \quad \xi_1=0,
$$
and moreover
$$
I(\xi_1,0)\to \pi  \ \text{ as } \xi_1\to +\infty,
\qquad
\partial_{\xi_1}I(\xi_1,0)>0 \;\;\forall \xi_1\in\R.
$$
\end{lemma}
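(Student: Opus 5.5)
We rewrite $I$ so that $\xi$ appears only through the kernel and the singular factor $z_1/|z|$ stays fixed. Set $K(w):=(1+|w|^2)^{-2}$, which is radial, positive, smooth, in $L^1(\R^2)$, with $\int_{\R^2}K=\pi$. Then $I(\xi)=\int_{\R^2} \frac{z_1}{|z|}K(z-\xi)\,dz = (h*K)(\xi)$, where $h(z)=z_1/|z|$ is bounded. Regularity follows by differentiating under the integral on the kernel: $\partial^\alpha I = h*\partial^\alpha K$, and every $\partial^\alpha K$ lies in $L^1$ and is bounded, so $I\in C^\infty(\R^2)$. In particular $I\in C^2$.

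For the symmetries, substitute $z\mapsto -z$ and use that $K$ is even; this gives $I(-\xi)=-I(\xi)$. Substituting $z_2\mapsto -z_2$ and $z_1\mapsto -z_1$ separately shows that $I$ is even in $\xi_2$ and odd in $\xi_1$. Hence $I(0,\xi_2)=0$.

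The main step is the sign. For $\xi_1>0$, pair the point $z$ with its reflection $z^*=(-z_1,z_2)$, where $h(z^*)=-h(z)$. This gives
$$
I(\xi)=\int_{z_1>0}\frac{z_1}{|z|}\big(K(z-\xi)-K(z^*-\xi)\big)\,dz .
$$
For $z_1>0$ we have $|z-\xi|^2-|z^*-\xi|^2=-4z_1\xi_1<0$. Since $K$ is strictly decreasing in $|w|$, the integrand is strictly positive, so $I(\xi)>0$. By oddness $I(\xi)<0$ when $\xi_1<0$, and this proves $I(\xi)=0$ if and only if $\xi_1=0$.

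For the limit, write $I(\xi_1,0)=\int_{\R^2} h(w+(\xi_1,0))K(w)\,dw$. As $\xi_1\to+\infty$, $h(w+(\xi_1,0))\to 1$ pointwise, and $|h|\le1$. Dominated convergence then gives $I(\xi_1,0)\to\int K=\pi$.

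The derivative is the point I expect to be delicate. Differentiate in the form $\partial_{\xi_1}I=\int h\,\partial_{\xi_1}[K(z-\xi)]\,dz=-\int h(z)(\partial_1K)(z-\xi)\,dz$. Integrating by parts, which is justified because $h$ is Lipschitz away from the origin, bounded, and has an integrable gradient singularity $O(1/|z|)$, yields
$$
\partial_{\xi_1}I(\xi)=\int_{\R^2}\partial_1 h(z)\,K(z-\xi)\,dz .
$$
No boundary term arises at the origin, because $\oint_{|z|=\e}|hK|\,d\sigma=O(\e)$. Next, $\partial_1 h=\frac{1}{|z|}-\frac{z_1^2}{|z|^3}=\frac{z_2^2}{|z|^3}\ge0$, which is not identically zero and is locally integrable. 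Since $K>0$, it follows that $\partial_{\xi_1}I(\xi_1,0)>0$ for every $\xi_1$. This also re-derives the monotonicity, so combined with oddness and the limit we obtain $I(\xi_1,0)\in(-\pi,\pi)$, which is the range the paper uses.

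The remaining checks are routine: verify the distributional derivative of $h$ has no delta-type part at $0$ (it does not, since $h$ is bounded and homogeneous of degree zero in two dimensions), and verify the decay of $K$ and its derivatives needed to exchange derivative and integral.
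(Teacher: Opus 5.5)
Your proof is correct and follows essentially the same route as the paper. The sign comes from the reflection $z_1\mapsto -z_1$ (the paper phrases this as $F(z_1)>F(-z_1)$ after Fubini), the limit $\pi$ from dominated convergence after translating by $(\xi_1,0)$, and the derivative from the positive weight $\partial_{z_1}(z_1/|z|)=z_2^2/|z|^3$. You are more careful than the paper in two places: you justify regularity via the convolution $I=h*K$, and you derive the derivative formula by integrating by parts with an excised ball. Your formula $\int_{\R^2}\frac{z_2^2}{|z|^3}\frac{dz}{(1+|z-\xi|^2)^2}$ becomes $\int_{\R^2}\frac{z_2^2}{|z+(\xi_1,0)|^3}\frac{dz}{(1+|z|^2)^2}$ after translation, which is the correct version of the paper's expression with $|z-(\xi_1,0)|^3$.
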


\begin{proof}
We can write
$$
I(\xi)=\int_{\R} z_1\, F(z_1)\,dz_2,
$$
where
$$
F(z_1):=\int_{\R}
\frac{1}{\sqrt{z_1^2+z_2^2}}\,
\frac{1}{\bigl(1+(z_1-\xi_1)^2+(z_2-\xi_2)^2\bigr)^2}
\,dz_2 >0.
$$

By symmetry  we obtain
$$
I(\xi)=\int_0^{+\infty} z_1\,[F(z_1)-F(-z_1)]\,dz_1.
$$

Assume now that $\xi_1>0$. For every $z_1>0$ and every $z_2\in\R$ we have
$$
(z_1-\xi_1)^2<(-z_1-\xi_1)^2,
$$
which implies
$$
\frac{1}{\bigl(1+(z_1-\xi_1)^2+(z_2-\xi_2)^2\bigr)^2}
>
\frac{1}{\bigl(1+(-z_1-\xi_1)^2+(z_2-\xi_2)^2\bigr)^2}.
$$
Integrating with respect to $z_2$, we deduce that
$$
F(z_1)>F(-z_1)
\qquad \text{for every } z_1>0.
$$
Therefore,
$$
I(\xi)=\int_0^{+\infty} z_1\,[F(z_1)-F(-z_1)]\,dz_1>0.
$$

  The case $\xi_1<0$ follows by the same argument with reversed inequalities,
while if $\xi_1=0$ the function $F$ is even and hence $I(\xi)=0$.
Let us consider
$$
I(\xi_1,0):=\int_{\R^2}\frac{z_1}{|z|}\,\frac{1}{(1+|z-(\xi_1,0)|^2)^2}\,dz=\int_{\R^2}\frac{z_1+\xi_1}{|z+(\xi_1,0)|}\,\frac{1}{(1+|z|^2)^2}\,dz .
$$ So by dominated convergence we get $$I(\xi_1,0)\to \intr \frac{1}{(1+|z|^2)^2}\,dz=\pi \hbox{ as }\xi_1\to +\infty.$$ Moreover 
$$\partial_{\xi_1} I(\xi_1,0)= \intr\frac{z_2^2}{|z-(\xi_1,0)|^3}\frac{1}{(1+|z|^2)^2}\,dz>0.$$
\end{proof}

 \begin{rmk}\label{fails} We point out that if one attempts to apply our technique to $N\geq 2$, a major difficulty arises
from the role of the Pohozaev identity. Indeed, the analogous of Proposition \ref{necessary1} would yield
$$\la_k\int_{B_1}  e^{v_k}\bigg(\frac{\partial V}{\partial x_1}\Re(x^N)-\frac{\partial V}{\partial x_2}\Im(x^N)\bigg) dx=o(1),$$
$$\la_k\int_{B_1}  e^{v_k}\bigg(\frac{\partial V}{\partial x_1}\Im(x^N)+\frac{\partial V}{\partial x_2}\Re(x^N)\bigg) dx=o(1).$$

Observe that only in the case $N=1$  can the above integrals be estimated effectively, whereas for $N\geq 2$    the limited information provided by the Pohozaev
identity  is not sufficient to control  the second derivatives of the coefficient functions. Much more accurate approximate solutions would be required in order  to reduce the error terms.   
 This explains why our method, based on the combination of two Pohozaev identities, 
fails to provide the necessary condition stated in Theorems \ref{th1} for $N\geq 2$. 
\end{rmk}

{\Large{\section{\bf{Proof of Theorem \ref{th1} Part II: Construction \\of Bubbling Solutions}}}}

During this section we assume that the potential $V$ satisfies assumptions \eqref{eqq1}--\eqref{eqq2} and, in addition, \eqref{pot}; then   we will construct a sequence of bubbling solutions for the problem \eqref{eq0}--\eqref{eqq0}.

The second part of Theorems \ref{th1} will be a consequence of  more general results concerning Liouville-type problem \eqref{eq1}. In order to provide such  results, we now give a construction of a suitable approximate solution. 

\

\subsection{{\large{Approximate solution}}} 
Let us define the \textit{bubble}
$$W_\la=W_{\la, b}=
\log  {\la\over (\frac{\la}{32}+|x^{2}- b|^2)^2}\,$$ which corresponds to a solution of the limit problem \eqref{limit} for $N=1$.

To obtain a better first approximation, we need to modify the functions $W_\la$   in order to satisfy the zero boundary condition. Precisely, we consider the projections $P W_\la $ onto the space $ H^1_0(B_1)$ of
$W_\la$, where the projection  $P:H^1(\R^N)\to  H^1_0(B_1)$ is
defined as the unique solution of the problem
$$
 \Delta P v=\Delta v\quad \hbox{in}\ B_1,\qquad  P v=0\quad \hbox{on}\ \partial B_1.
$$

Consider ${\cal H}(x,y)$ the  regular part of the Green function in $B_1$:
\beq\label{regG}{\cal H}(x,y)=\frac{1}{2\pi}\log\Big(|x|\Big|y-\frac{x}{|x|^2}\Big|\Big)=\frac{1}{4\pi}\log(|x|^2|y|^2+1-\langle x,y\rangle),
\eeq  where $\langle\cdot,\cdot\rangle$ denotes the scalar product in $\R^2$. 
Let us take $b$ in a small neighborhood of $0$: observe that  the function ${\cal H}(x^2,b)$ is harmonic in $B_1$ and satisfies ${\cal H}(x^2,b)=\frac{1}{2\pi}\log |x^2-b|$ on $\partial B_1$. A straightforward computation gives that for any $x\in\partial B_1$ 
$$\big|PW_\la- W_\la+\log\la-8\pi{\cal H}(x^2,b)\big|=\big|W_\la-\log\la+4\log |x^2-b|
\big|\leq C\la$$
uniformly for $b$ in a small neighborhood of $0$. 
Since the expressions considered inside the absolute values 
are harmonic in $B_1$, then the maximum principle applies and implies
the following asymptotic expansion
\beq\label{pro-exp1}\begin{aligned}
 PW_\la=& W_\la-\log\la+8\pi{\cal H}(x^2,b)+O(\la)\\ =&-2\log\(\frac{\la}{32}+|x^{2}- b|^2\)+8\pi{\cal H}(x^2,b)+O(\la)
\end{aligned}\eeq 
uniformly for $x\in \overline B_1$ and $b $ in a small neighborhood of $0$.

We shall look for a solution to \eqref{eq1} in a small neighborhood of the first approximation, namely a solution of the form
 $$v_\la=PW_\la
 + {\phi}_\la,$$ where the rest term
$\phi_\la$ is small in
$H^1_0(B_1)$-norm.

We are now in the position to state  the main theorem of second part of the paper.

\begin{thm} \label{main1} Assume that $V$ satisfies the hypotheses \eqref{eqq1}-\eqref{eqq2} and, in addition, \eqref{pot}. Then, for $\la$ sufficiently small  there
exist  $\phi_\la \in H^1_0(B_1)$ and $b=b_\la=O(\sqrt\la)$ such that the couple $
PW_\la+\phi_\la$ solves problem
\eqref{eq1}.
Moreover, for any fixed $\e>0$,
$$\| \phi_\la \|_{H^1_0(B_1)}\leq \la^{\frac34-\e}\;\;\hbox{ for } \la \hbox{ small enough}.$$
\end{thm}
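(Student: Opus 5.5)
Writing $b=\sqrt\la\,\beta$ with $\beta$ in a fixed compact subset of $\R^2$, I will carry out a Lyapunov--Schmidt reduction around $PW_\la$. The nonlinear problem for $\phi$ reads
$$-\Delta\phi-\la|x|^2V(x)e^{PW_\la}\phi=R_\la+N_\la(\phi),\qquad R_\la:=\la|x|^2Ve^{PW_\la}-|x|^2e^{W_\la},\qquad \phi\in H^1_0(B_1),$$
with $N_\la$ collecting the terms quadratic in $\phi$.

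\textbf{Step 1: the error.} By \eqref{pro-exp1}, $\la e^{PW_\la}=e^{W_\la}\big(1+O(|x^2-b|)+O(\la)\big)$, because $8\pi\mathcal H(x^2,b)=O(|x^2|\,|b|)$ near $0$. Together with \eqref{rmmm}, the dominant part of $R_\la$ is therefore $\tfrac12|x|^2e^{W_\la}(\gamma_1x_1^2+\gamma_2x_2^2)$. In the scale $|x|\sim\la^{1/4}$ this gives $\|R_\la\|_{L^p}\le C\la^{\frac12-\frac{p-1}{2p}-\e}$. Since $p$ may be taken close to $1$, this should yield the rate $\la^{\frac34-\e}$ once $\phi$ is bounded through the linear theory in Step 2; I would tune $p$ and the log loss so that the final bound is exactly $\la^{3/4-\e}$.

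\textbf{Step 2: linear theory.} I use the nondegeneracy result of \cite{delespomu}: the bounded kernel of $-\Delta-|x|^2e^{U_{\tau,b}}$ is spanned by $\partial_\tau U$, $\partial_{b_1}U$ and $\partial_{b_2}U$. I will project onto the complement of $PZ_j$ for $j=1,2$, where $Z_j=\partial_{b_j}W_\la$, leaving out the dilation direction. The dilation mode is excluded by the Dirichlet condition and the Green-function correction, as in \cite{dawe,dawezhasym}. I then prove by contradiction and blow-up the a priori estimate $\|\phi\|\le C|\log\la|\,\|h\|_{L^p}$, and obtain $\phi_\la(\beta)$ with the stated norm bound from a contraction argument.

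\textbf{Step 3: the reduced problem (main obstacle).} It remains to find $\beta$ for which the Lagrange multipliers $c_j(\beta)$ vanish. Equivalently, testing the equation against $PZ_j$, this amounts to finding a critical point of $\mathcal F_\la(\beta)$. The expected leading term of $\mathcal F_\la$ comes from $\int R_\la Z_j$, which is the same quantity that drove the necessary condition in Section 2. Rescaling and using Remark \ref{remcopycopy}, the relevant expressions should be
$$\pi(\gamma_1-\gamma_2)+(\gamma_1+\gamma_2)I(\beta)\qquad\text{and}\qquad (\gamma_1+\gamma_2)\int_{\R^2}\frac{z_2}{|z|}\frac{dz}{(1+|z-\beta|^2)^2},$$
up to a nonzero factor. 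By Lemma \ref{signlemma}, the second expression vanishes exactly when $\beta_2=0$. Under \eqref{pot} the number $-\pi(\gamma_1-\gamma_2)/(\gamma_1+\gamma_2)$ lies in $(-\pi,\pi)$, and $I(\cdot,0)$ is odd, increasing, and tends to $\pm\pi$. Hence there is a unique $\beta_1^*$ making the first expression vanish, and the resulting zero $(\beta_1^*,0)$ is nondegenerate because $\partial_{\xi_1}I>0$. The points I expect to cost the most work are justifying that the remaining terms are $o(1)$ uniformly in $C^0$ on compact $\beta$-sets, and that the leading map is exactly the expression above rather than merely proportional to it. Once this is settled, a degree or Brouwer argument gives $\beta_\la\to(\beta_1^*,0)$, and so $b_\la=\sqrt\la\,\beta_\la=O(\sqrt\la)$.
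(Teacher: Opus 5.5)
\textbf{Verdict.} Your architecture matches the paper's. You do a reduction around $PW_\la$ with $|b|\le M\sqrt\la$, impose orthogonality only to $PZ^1_\la,PZ^2_\la$, use the $|\log\la|$-loss linear estimate and a contraction, and finish with a degree argument for the limiting field, whose zero $(\beta_1^*,0)$ you correctly identify via Lemma~\ref{signlemma}. The genuine gap is at the end of Step~1, and it cannot be closed because the $\la^{\frac34-\e}$ bound is false.

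\textbf{The gap.} Your own bound $\|R_\la\|_p\le C\la^{\frac12-\frac{p-1}{2p}}$ is correct. Letting $p\to1$ only removes the loss $\frac{p-1}{2p}$, and $\|L^{-1}\|\le C|\log\la|$ only adds a logarithm. So the contraction gives $\|\phi_\la\|\le\la^{\frac12-\e}$, not $\la^{\frac34-\e}$, and there is nothing to tune.

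\textbf{Why the $\frac34$ rate fails.} Use the paper's convention $R_\la=|x|^2e^{W_\la}-\la V|x|^2e^{PW_\la}$ and write $R_\la=-f+g$, where $f=\frac12(\gamma_1x_1^2+\gamma_2x_2^2)|x|^2e^{W_\la}$, $\eta=\frac{p-1}{2p}$, and $\|g\|_p=O(\la^{\frac34-\eta})$. By \eqref{pot}, $f$ has one sign. Moreover $\int_{B_{K\delta}}|f|\,dx\ge c\sqrt\la$ with $\delta=(\la/32)^{1/4}$, uniformly for $|b|\le M\sqrt\la$. Since $G>0$ and $G\ge\frac1{2\pi}\log\frac1\delta-C$ on $B_{K\delta}\times B_{K\delta}$, we get $\|i^*_p(f)\|^2=\int\!\!\int G(x,y)f(x)f(y)\ge c\,\la|\log\la|$. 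Hence $\|i^*_p(R_\la)\|\ge c\sqrt{\la|\log\la|}$.

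Now suppose $PW_{\la,b}+\phi$ solves \eqref{eq1} with $\|\phi\|\le1$. Then $-\Delta\phi=\la V|x|^2e^{PW_\la}(e^{\phi}-1)-R_\la$. Test this with $\psi=i^*_p(R_\la)$ and use Lemmas~\ref{aux00} and \ref{tmt}: this gives $\|\psi\|^2\le C\la^{-\eta}\|\phi\|\,\|\psi\|$. Therefore $\|\phi\|\ge c\,\la^{\frac12+\eta}$ for every $\eta>0$, which is incompatible with $\|\phi\|\le\la^{\frac34-\e}$ once $\e<\frac14$.

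\textbf{Where the paper gets $\frac34$.} It comes only from Lemma~\ref{aux}, which claims $\|R_\la\|_p=O(\la^{\frac34-\frac{p-1}{2p}})$. This does not follow from Lemma~\ref{aux00}: for the leading term ($s=2$) that lemma gives only $\la^{\frac12-\frac{p-1}{2p}}$. It is also contradicted by the paper's own \eqref{coll}--\eqref{tccone}, where $\int_{B_1}R_\la PZ^j_\la\,dx$ is of order $\sqrt\la$ for generic $b$ even though $|PZ^j_\la|\le C$.

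\textbf{What your argument actually proves.} It proves the existence part, with $b_\la=O(\sqrt\la)$ and the corrected bound $\|\phi_\la\|\le\la^{\frac12-\e}$. At this rate the $\phi$-dependent terms in the tested equations (the analogues of \eqref{capra1}--\eqref{capra3}) are $O(\la^{1-2\e-\eta})=o(\sqrt\la)$. That is all Step~3 needs, since the leading field has size $\sqrt\la$.

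\textbf{Smaller corrections.}
\begin{enumerate}
\item In $\la e^{PW_\la}=e^{W_\la}(1+\cdots)$ the correction is $O(|x|^2|b|)+O(\la)$, not $O(|x^2-b|)$; the latter would be as large as the leading term.
\item After the changes of variable $x=(\la/32)^{1/4}y$ and $z=y^2$, the argument of $I$ is $\sqrt{32}\,\beta$, not $\beta$.
\item Nondegeneracy of the zero needs more than $\partial_{\xi_1}I>0$. The off-diagonal Jacobian entries vanish by the reflection $z_2\mapsto-z_2$. The $\beta_2$-derivative of the second component is a nonzero multiple of $\int_{\R^2}\frac{(z_1+\xi_1)^2}{|z+(\xi_1,0)|^3}\frac{dz}{(1+|z|^2)^2}>0$.
\end{enumerate}
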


In the remaining part of the paper we will prove Theorem \ref{main1} and at the end of the section  we shall see how the second part of Theorem \ref{th1} follows quite directly as a corollary. 

Before the proof, we now set the notations and basic well-known
facts to be used in the rest of the paper. We denote by  $\|\cdot\|$ and $\|\cdot\|_p$  the norms in  the space $ H^1_0(B_1)$ and $L^p(B_1)$, respectively, namely
 $$\|u\|:=\|u\|_{ H^1_0(B_1)}
 ,\qquad \|u\|_p:=\|u\|_{L^p(B_1)}
 \quad \forall u\in  H^1_0(B_1).$$

In next lemma we recall the well-known Moser-Trudinger inequality
(\cite{Moe, Tru}).

\begin{lemma}\label{tmt} There exists $C>0$ such that 
 $$\int_{B_1} e^{\frac{4\pi u^2}{\|u\|^2}}dy\le C \quad \forall u\in{ H}^1_0(B_1).$$ 
 In particular,  for any $q\geq 1$
 $$\| e^{u}\|_{q}\le  
 C_q e^{{\frac{q}{ 16\pi}}\|u\|^2}\quad \forall  u\in{H}^1_0(B_1).$$

\end{lemma}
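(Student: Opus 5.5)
The statement to prove is Lemma \ref{tmt}: the Moser–Trudinger inequality on $B_1$ and its consequence $\|e^u\|_q\le C_q e^{\frac{q}{16\pi}\|u\|^2}$. I take the first inequality as the classical result of Moser and Trudinger (\cite{Moe, Tru}). For $u\in H^1_0(B_1)$ with $\|u\|=\|\nabla u\|_2$, it states $\sup_{\|u\|\le 1}\int_{B_1} e^{4\pi u^2}dy\le C$. The displayed form follows by applying this to $u/\|u\|$ when $u\neq 0$; the case $u=0$ is trivial. So the only work is deducing the second estimate from the first.

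The key step is an elementary Young-type inequality that separates $u$ from its norm. For any $q\ge1$ and any $u\neq0$, expand the square
$$0\le \Big(\frac{\sqrt{4\pi}\,|u|}{\|u\|}-\frac{q\|u\|}{2\sqrt{4\pi}}\Big)^2,$$
which gives
$$q|u|\le \frac{4\pi u^2}{\|u\|^2}+\frac{q^2\|u\|^2}{16\pi}.$$
Exponentiating and integrating over $B_1$ yields
$$\int_{B_1} e^{qu}\,dy\le \int_{B_1}e^{q|u|}\,dy\le e^{\frac{q^2}{16\pi}\|u\|^2}\int_{B_1}e^{\frac{4\pi u^2}{\|u\|^2}}\,dy\le C\,e^{\frac{q^2}{16\pi}\|u\|^2}.$$

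Taking the $q$-th root gives
$$\|e^u\|_q\le C^{1/q}e^{\frac{q}{16\pi}\|u\|^2},$$
which is the claim with $C_q=C^{1/q}$. In fact $C_q\le \max(C,1)$ uniformly in $q$.

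There is no real obstacle here. The only point requiring care is choosing the weights in the Young inequality so that the coefficient in front of $u^2/\|u\|^2$ is exactly the sharp constant $4\pi$; that choice is what produces the constant $\frac{q}{16\pi}$ in the exponent.
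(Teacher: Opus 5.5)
Your argument is correct and matches the paper, which gives no proof and simply quotes both estimates from Moser and Trudinger; your completing-the-square bound $q|u|\le \frac{4\pi u^2}{\|u\|^2}+\frac{q^2\|u\|^2}{16\pi}$ is the standard way the second estimate follows from the first. The one detail you leave implicit is the case $u=0$ of the second estimate, which holds because the first inequality forces $C\ge |B_1|=\pi$, so that $C^{1/q}\ge \pi^{1/q}=\|e^{0}\|_q$.
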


As commented in the introduction, our proof uses the singular
perturbation method. For that, the nondegeneracy of the functions
that we use to build our approximating solution is essential. The following
proposition is devoted to the nondegeneracy of the finite mass 
solutions of the singular  Liouville equation.

\begin{prop}
\label{esposito} Assume that $\phi:\R^2\to\R$  solves the problem
\begin{equation}\label{l1}
-\Delta \phi =32{|y|^{2}\over (1+|y^{2}-\xi|^2)^2}\phi\;\;
\hbox{in}\ \rr^2,\quad \int_{\R^2}|\nabla
\phi(y)|^2dy<+\infty.
\end{equation}
 Then there exist $c_0,\,c_1,\, c_2\in\R$
such that
$$\phi(y)=c_0  Z_0+ c_1Z_1 +c_2Z_2$$ where
$$Z_0(y):   = {1-|y^{2}-\xi|^2\over 1+|y^{2}-\xi|^2} ,\ \; \;Z_1(y):={ \Re(y^{2}-\xi)\over  1+|y^{2}-\xi|^2} 
,\ \;\;Z_2(y):={ \Im(y^{2}-\xi)\over  1+|y^{2}-\xi|^2}.
$$ \end{prop}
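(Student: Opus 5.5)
The plan is to reduce Proposition \ref{esposito} to the classical nondegeneracy of the regular Liouville bubble by the change of variables $z=y^2$, i.e. by the conformal map $w(y)=y^2$ (in complex notation), following the idea of Remark \ref{remcopycopy}.

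\textbf{Step 1: transfer to the $z$-plane.} Write $y^2=z$ and consider the two branches $\psi_\pm(z)=\phi(\pm\sqrt z)$. Since $\Delta_y[\psi(y^2)]=4|y|^2(\Delta_z\psi)(y^2)$, the equation \eqref{l1} becomes
$$-\Delta_z\psi=\frac{8}{(1+|z-\xi|^2)^2}\,\psi\qquad\hbox{in }\R^2\setminus\{0\}.$$
The energy transforms conformally: $\int|\nabla_y\phi|^2dy$ equals $\int|\nabla_z\psi_+|^2dz+\int|\nabla_z\psi_-|^2dz$ over the appropriate half-domains, so the energy is finite. It is cleaner to split $\phi$ into an even part and an odd part under $y\mapsto -y$; both parts solve \eqref{l1}, since the potential is even in $y$.

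\textbf{Step 2: the even part.} The even part $\phi_e$ descends to a single-valued function $\psi$ on $\R^2\setminus\{0\}$ with finite Dirichlet energy. It solves the linearized equation around the standard bubble centered at $\xi$. A point has zero capacity in 2D, and $\psi$ is locally bounded by elliptic estimates in $y$ near $0$. Hence the singularity at $z=0$ is removable, so $\psi\in H^1_{loc}$ solves the equation on all of $\R^2$.

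Finite energy together with the equation gives that $\psi$ is bounded at infinity. Indeed, $\psi\circ$ Kelvin inversion extends across infinity, since the weight $(1+|z|^2)^{-2}$ is exactly conformally invariant on $\mathbb S^2$. By the well-known classification of bounded kernel elements of $-\Delta-\frac{8}{(1+|z-\xi|^2)^2}$ (Baraket–Pacard), $\psi$ is a linear combination of
$$\frac{1-|z-\xi|^2}{1+|z-\xi|^2},\qquad\frac{\Re(z-\xi)}{1+|z-\xi|^2},\qquad\frac{\Im(z-\xi)}{1+|z-\xi|^2}.$$
These are exactly $Z_0,Z_1,Z_2$ after substituting $z=y^2$.

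\textbf{Step 3: the odd part.} This is the main obstacle. For $\phi_o$, the function $\psi(z)=\phi_o(\sqrt z)$ is double-valued with sign change around $z=0$: it lives on the double cover with antiperiodic monodromy. I would show that $\phi_o=0$. The first attempt is an energy argument, namely the spectral gap of the antiperiodic problem on $\mathbb S^2$ lifted via stereographic projection. Pulled back to $\mathbb S^2$, the operator is $-\Delta_{\mathbb S^2}-2$ acting on sections of the nontrivial real line bundle over $\mathbb S^2$ branched at the two points $0$ and $\infty$ (the preimages of $0,\infty$). Its spectrum lies in half-integer shifts of the spherical harmonic spectrum, so $2$ should not be an eigenvalue.

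As a backup, I would use a Fourier decomposition in the angle when $\xi=0$. The odd modes in $y$ correspond to half-integer modes $e^{i(k+1/2)\theta_z}$. For these, the radial ODE $-\psi''-\psi'/r+\frac{(k+1/2)^2}{r^2}\psi=\frac{8}{(1+r^2)^2}\psi$ has no finite-energy solutions, since positive solutions exist only for integer $k\in\{0,\pm1\}$; this follows by comparison with the Sturm–Liouville problem. For general $\xi$, I would rely on the degree-$2$ covering being an orbifold of the sphere and apply the known result of del Pino–Esposito–Musso \cite{delespomu}, which is exactly this nondegeneracy statement. The paper may in fact simply cite that reference.
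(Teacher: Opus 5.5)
Your reduction of the even part is sound. With $z=y^2$, the even part solves the linearized regular Liouville equation at the bubble centred at $\xi$. The puncture $z=0$ is removable, Kelvin inversion centred at $\xi$ gives boundedness, and the Baraket--Pacard classification returns exactly $Z_0,Z_1,Z_2$.

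The gap is Step~3 for $\xi\neq0$. To turn the operator into $-\Delta_{\mathbb S^2}-2$ you must project stereographically from the bubble centre, $w=z-\xi$. The branch points of the double cover are then the images of $z=0$ and $z=\infty$, i.e.\ $w=-\xi$ and $w=\infty$, at spherical distance $2\arctan(1/|\xi|)$. They are antipodal only when $\xi=0$. Only in that case does separation of variables give the antiperiodic spectrum $\{\ell(\ell+1):\ell\in\{\tfrac12,\tfrac32,\tfrac52,\dots\}\}$, which indeed misses $2$.

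For $\xi\neq0$ there is no rotational symmetry, and the antiperiodic spectrum genuinely moves with $\xi$. As $|\xi|\to\infty$ the two branch points merge, and this spectrum converges to the ordinary spectrum of $\mathbb S^2$, which contains $2$. So nothing in your argument keeps $2$ out of it. Your Fourier backup has the same restriction, since $8(1+|z-\xi|^2)^{-2}$ is not radial about the branch point $z=0$.

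Even for $\xi=0$, the right criterion is not positivity. In $t=\log|z|$ the mode-$m$ equation is $-\psi''-2\operatorname{sech}^2t\,\psi=-m^2\psi$, whose only $L^2$ eigenvalue is $-1$, so no half-integer $m$ occurs. Hence the vanishing of the odd part for $\xi\neq0$ is not established by your own argument. That is exactly the part of the statement that does not reduce to the regular Liouville case.

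Your fallback, citing the del Pino--Esposito--Musso nondegeneracy theorem, does close the proof. But that is precisely what the paper does (\cite{dem}), and once you invoke it, Steps~1--3 become unnecessary: the theorem classifies all bounded solutions of \eqref{l1}, not only the odd ones. The only extra ingredient is that finite-energy solutions are bounded, which the paper takes from \cite[Theorem 6.1]{gpistoia}.

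Your Kelvin/removable-singularity argument is a fine substitute for that citation, but you state it only for the even part in the $z$-plane. You need it for $\phi$ itself, and it works directly in $y$. With $y=w/|w|^2$, the function $\hat\phi(w)=\phi(y)$ has finite energy and solves $-\Delta\hat\phi=32|w|^2\big(|w|^4+|1-\xi\bar w^2|^2\big)^{-2}\hat\phi$. The coefficient is bounded near $w=0$, so $\hat\phi$ is an $H^1$ weak solution across $w=0$ and is bounded by elliptic regularity.
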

\begin{proof}
In \cite[Theorem 6.1]{gpistoia} it was proved that any solution
$\phi$ of \eqref{l1} is actually a bounded solution. Therefore we
can apply the result in \cite{dem} to conclude that $\phi= c_0 \phi_0 + c_1
\phi_1 + c_2 \phi_2$ for some $c_0,c_1,c_2\in \R$.

\end{proof}

\subsection{{\large{Estimate of the error term}}}
The goal of this section is to  provide an estimate of the error up to which the function $PW_\la$ solves problem \eqref{eq1}.
First of all, we perform the following estimates.

From now on, let 
$M>0$ be a sufficiently large number, to be chosen later.

\begin{lemma}\label{aux00}   Let
 $s=0,1,2,3$ and $p\geq 1$ be fixed. The following holds:\beq\label{aux00esti1}
\|  |x|^{2+s}e^{W_{\la}}\|_p
\leq C\la^{\frac{s}{4}}\la^{-\frac{p-1}{2p}} ,\quad \|  \la |x|^{2+s} e^{PW_{\la}}\|_p
\leq C\la^{\frac{s}{4}}\la^{-\frac{p-1}{2p}} \eeq
uniformly for $|b|\leq M\sqrt{\la}$. 
\end{lemma}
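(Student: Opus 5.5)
The plan is to first reduce the second estimate in \eqref{aux00esti1} to the first, and then prove the first by the change of variables of Remark \ref{remcopycopy} followed by a rescaling.

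\textbf{Reduction.} By \eqref{pro-exp1}, $\la e^{PW_\la}=e^{W_\la}\,e^{8\pi{\cal H}(x^2,b)+O(\la)}$. Since $|b|\le M\sqrt\la$ is small, ${\cal H}(x^2,b)=\frac1{4\pi}\log(|x|^4|b|^2+1-\langle x^2,b\rangle)$ is uniformly bounded on $\overline B_1$. Hence
$$\la e^{PW_\la}\le C e^{W_\la}\quad\hbox{on }\overline B_1,$$
and it suffices to bound $\| |x|^{2+s}e^{W_\la}\|_p$.

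\textbf{Change of variables.} Write $e^{W_\la}=\la\,(\frac{\la}{32}+|x^2-b|^2)^{-2}$. Then
$$\int_{B_1}|x|^{(2+s)p}e^{pW_\la}dx=\la^p\int_{B_1}|x|^2\,\frac{|x^2|^{((2+s)p-2)/2}}{(\frac{\la}{32}+|x^2-b|^2)^{2p}}dx .$$
Apply Remark \ref{remcopycopy} with $f(y)=|y|^{((2+s)p-2)/2}(\frac\la{32}+|y-b|^2)^{-2p}\chi_{B_1}(y)$. This gives
$$\frac{\la^p}{2}\int_{B_1}\frac{|y|^{((2+s)p-2)/2}}{(\frac\la{32}+|y-b|^2)^{2p}}dy .$$
Now rescale $y=\sqrt\la\, z$ and set $\beta=b/\sqrt\la$, so that $|\beta|\le M$. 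The integral becomes
$$C\,\la^{p}\,\la^{\frac{(2+s)p-2}{4}}\,\la^{-2p}\,\la\int_{|z|\le\la^{-1/2}}\frac{|z|^{((2+s)p-2)/2}}{(\frac1{32}+|z-\beta|^2)^{2p}}dz .$$

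\textbf{Bounding the $z$-integral.} The integrand decays like $|z|^{(2+s)p/2-1-4p}$. This is integrable at infinity exactly when $(2+s)p/2-4p<-1$, i.e. when $(6-s)p>2$, which holds for $s\le3$ and $p\ge1$. The exponent of $|z|$ near the origin is $((2+s)p-2)/2\ge0$, so there is no singularity, and the denominator is bounded below. The integral is therefore bounded uniformly in $|\beta|\le M$.

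\textbf{Exponent count.} The power of $\la$ is
$$p+\frac{(2+s)p}{4}-\frac12-2p+1=\frac{sp}{4}-\frac{p-1}{2}.$$
Taking the $p$-th root gives $\la^{s/4}\la^{-(p-1)/(2p)}$, which is exactly \eqref{aux00esti1}.

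The main point to check is the integrability at infinity in the rescaled variable for the extreme case $s=3$. There the decay is $|z|^{-5p/2-1}$, which is fine. Apart from that, the only delicate ingredient is the uniform boundedness of ${\cal H}(x^2,b)$ used in the reduction step, which holds because $b\to0$.
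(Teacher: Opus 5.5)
Your proof is correct and follows essentially the same route as the paper. The $PW_\la$ bound is reduced to the $W_\la$ bound via \eqref{pro-exp1} and the uniform boundedness of $\mathcal{H}(x^2,b)$, and the $W_\la$ bound is a scaling computation. The only difference is cosmetic: you first substitute $y=x^2$ through Remark \ref{remcopycopy} and then set $y=\sqrt\la\, z$, whereas the paper rescales $x=\la^{1/4}z$ directly and bounds the denominator below by $c(1+|z|^4)^{2p}$.

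There is one harmless arithmetic slip. For $s=3$ your integrand decays like $|z|^{-\frac{3p}{2}-1}$, not $|z|^{-\frac{5p}{2}-1}$. This is still integrable, since $(6-s)p=3p>2$, so the conclusion is unaffected.
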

\begin{proof}
We compute $$\begin{aligned}\||x|^{2+s} e^{W_{\la}}\|_p^p&=
\la^{p}\into\frac{|x|^{(2+s )p}}{(\frac{\la}{32}+|x^2-b|^2)^{2p}}dx
\leq C\la^{ s \frac{p}{4}-\frac{p-1}{2}} \intr\frac{|z|^{(s +2)p}}{(1+|z|^4)^{2p}}dz
\end{aligned}$$ uniformly for $|b|\leq M\sqrt\la$.
Taking into account that the last integral is finite for   $s=0,1,2,3$ and $p\geq 1$ 
we deduce the first part of  \eqref{aux00esti1}.
To prove the second part it is sufficient to observe that 
by \eqref{pro-exp1} 
we derive
\beq\label{poi}\la e^{PW_{\la}}= e^{W_{\la}+O(1)}
=e^{W_{\la}}(1+O(1)).\eeq
\end{proof}

\begin{lemma}\label{aux}  Define
$${R}_\la:=
-\Delta PW_\la-\la V(x) |x|^{2}e^{PW_\la}= |x|^{2}e^{W_\la}-\la V(x)|x|^{2} e^{PW_\la}.$$
 For any fixed $p\geq 1$ the following holds

$$\|R_\la\|_{p}=O\big(\la^{\frac34-\frac{p-1}{ 2p}}\big).$$ uniformly for $|b|\leq M\sqrt{\la}$.  
Moreover the following expansion holds  \beq\label{cuore}R_\la=(\gamma_1x_1^2+\gamma_2x_2^2)|x|^{2}e^{W_\la}+\big(O(|x|^3)+O(\sqrt\la|x|^2)+O(\la)\big)|x|^{2}e^{W_\la}\eeq uniformly for $|b|\leq M \sqrt{\la}$.  Here $\gamma_1, \gamma_2$ have been defined in \eqref{rmmm}.

\end{lemma}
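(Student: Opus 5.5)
The plan is to write $R_\la$ in factorized form, expand the factor using \eqref{pro-exp1} and \eqref{rmmm}, which gives \eqref{cuore}, and then estimate each term in $L^p$ with Lemma \ref{aux00}. The key obstacle is the quadratic term, which the plain estimate does not control at the stated rate.

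\textbf{Step 1: factorization.} Since $-\Delta W_\la=|x|^2e^{W_\la}$ and $\Delta PW_\la=\Delta W_\la$, we have
$$R_\la=|x|^2e^{W_\la}\Big(1-V(x)\,e^{PW_\la-W_\la+\log\la}\Big).$$
By \eqref{pro-exp1}, $PW_\la-W_\la+\log\la=8\pi{\cal H}(x^2,b)+O(\la)$ uniformly in $\overline B_1$.

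\textbf{Step 2: the Green's function term.} By \eqref{regG},
$$8\pi{\cal H}(x^2,b)=2\log\big(1+|x|^4|b|^2-\langle x^2,b\rangle\big).$$
For $|b|\le M\sqrt\la$ the argument of the logarithm is $1+O(|x|^2\sqrt\la)$. Hence
$$e^{PW_\la-W_\la+\log\la}=1+O(\sqrt\la|x|^2)+O(\la).$$

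\textbf{Step 3: expansion of $V$.} By \eqref{rmmm}, $V(x)=1+\tfrac12(\gamma_1x_1^2+\gamma_2x_2^2)+O(|x|^3)$. Multiplying this with Step 2 and inserting into Step 1 gives \eqref{cuore}, with the coefficient of the quadratic form fixed by \eqref{rmmm} and the sign convention of $R_\la$. This is a routine bookkeeping step.

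\textbf{Step 4: $L^p$ bounds of the lower-order terms.} Using $|\gamma_1x_1^2+\gamma_2x_2^2|\le C|x|^2$, each term in \eqref{cuore} has the form $c\,|x|^{2+s}e^{W_\la}$, and Lemma \ref{aux00} applies with the indicated $s$:
\begin{itemize}
\item $O(|x|^3)|x|^2e^{W_\la}$ (case $s=3$): bounded by $C\la^{\frac34-\frac{p-1}{2p}}$, exactly the target rate.
\item $O(\sqrt\la|x|^2)|x|^2e^{W_\la}$ (case $s=2$, times $\sqrt\la$): bounded by $C\la^{1-\frac{p-1}{2p}}$.
\item $O(\la)|x|^2e^{W_\la}$ (case $s=0$, times $\la$): bounded by $C\la^{1-\frac{p-1}{2p}}$.
\end{itemize}
The last two are better than the target rate.

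\textbf{Step 5: the quadratic term (main obstacle).} The remaining term is $(\gamma_1x_1^2+\gamma_2x_2^2)|x|^2e^{W_\la}$. Lemma \ref{aux00} with $s=2$ only gives $C\la^{\frac12-\frac{p-1}{2p}}$.

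This bound appears sharp. Rescaling $x^2=\sqrt\la\,z$ as in the proof of Lemma \ref{aux00} (or via Remark \ref{remcopycopy}), with $b=\sqrt\la\,\eta$ and $|\eta|\le M$, the term becomes $\la^{1/2}$ times a fixed profile. That profile has positive $L^p$ norm: its absolute value is bounded below by $c\min(|\gamma_1|,|\gamma_2|)|x|^4e^{W_\la}$, which is nonzero under \eqref{pot}. An $L^p$ norm admits no cancellation, so exactly $\la^{\frac12-\frac{p-1}{2p}}$ comes out.

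What I would try first is to check whether the $\la^{\frac34}$ rate is really meant for $R_\la$ minus its quadratic part. That remainder is precisely what Steps 1--4 bound at the rate $\la^{\frac34-\frac{p-1}{2p}}$. It is also what matters later in the reduction, since the quadratic part is handled explicitly through the expansion \eqref{cuore} when computing the reduced functional.

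Taken literally for the full $R_\la$, I do not see how to reach $\la^{\frac34}$. My argument then proves \eqref{cuore}, the bound $\la^{\frac34-\frac{p-1}{2p}}$ for everything except the leading quadratic term, and the bound $\la^{\frac12-\frac{p-1}{2p}}$ for $\|R_\la\|_p$ itself.
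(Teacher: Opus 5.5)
Your route is the paper's route. You factor $R_\la=|x|^2e^{W_\la}\big(1-V(x)e^{PW_\la-W_\la+\log\la}\big)$, expand the exponential with \eqref{pro-exp1} and \eqref{regG}, Taylor-expand $V$ by \eqref{rmmm}, and bound each piece with Lemma \ref{aux00}. Your Step 5 objection is correct, and the error it finds is in the paper, not in your argument. The paper ends with ``The conclusion follows by Lemma \ref{aux00}'', but for the quadratic term that lemma (with $s=2$) only gives $\la^{\frac12-\frac{p-1}{2p}}$.

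You can say more than ``I do not see how'': the stated rate is false. Let $Q_\la$ denote the quadratic term, and rescale $x=(\la/32)^{1/4}y$ with $\tilde b=\sqrt{32}\,\la^{-1/2}b$, so that $|\tilde b|\le\sqrt{32}M$. Then $|x|^4e^{W_\la}=32|y|^4(1+|y^2-\tilde b|^2)^{-2}$ is $O(1)$ pointwise. The small factor comes only from $dx=(\la/32)^{1/2}dy$, not from the profile. Since $|y^2-\tilde b|\le|y|^2+\sqrt{32}M$, and using $\gamma_1\gamma_2>0$ as you did,
\[
\|Q_\la\|_p\ \ge\ c\,\la^{\frac1{2p}}\Big(\int_{B_1}\frac{|y|^{4p}}{\big(1+(|y|^2+\sqrt{32}M)^2\big)^{2p}}\,dy\Big)^{1/p}\ \ge\ c'\,\la^{\frac12-\frac{p-1}{2p}}
\]
uniformly for $|b|\le M\sqrt\la$. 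Combining this with your Step 4 and the triangle inequality gives $\|R_\la\|_p\ge c'\la^{\frac12-\frac{p-1}{2p}}-C\la^{\frac34-\frac{p-1}{2p}}$. This contradicts the claimed $O(\la^{\frac34-\frac{p-1}{2p}})$.

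Do not hedge in Step 3 either. Since $1-V(x)\big(1+O(\sqrt\la|x|^2)+O(\la)\big)=-\frac12(\gamma_1x_1^2+\gamma_2x_2^2)+O(|x|^3)+O(\sqrt\la|x|^2)+O(\la)$, the leading coefficient in \eqref{cuore} is $-\frac12$, not $1$. The paper's proof produces $1$ only because it writes $\la V|x|^2e^{PW_\la}=(\gamma_1x_1^2+\gamma_2x_2^2)|x|^2e^{W_\la}+\dots$. That drops both the term $|x|^2e^{W_\la}$ and the factor $\frac12$ from \eqref{rmmm}. The constant is harmless later, because it multiplies the whole leading term in \eqref{assem} and does not move the zero $\tilde b^*$.

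Your reading that $\la^{\frac34}$ was meant only for $R_\la$ minus its quadratic part does not match how the lemma is used. The bound for the full $R_\la$ enters \eqref{non1} and \eqref{capra2}. The right repair is to replace the rate by $\la^{\frac12-\frac{p-1}{2p}}$. Then Proposition \ref{nonl} and Theorem \ref{main1} hold with $\|\phi_\la\|\le\la^{\frac12-\e}$ instead of $\la^{\frac34-\e}$; the contraction still closes for $\eta<\e<\frac14$. The contributions \eqref{capra1}--\eqref{capra3} become $O(\la^{1-\e'})$, with $\e'\to0$ as $\e\to0$ and $p\to1$. This is still $o(\sqrt\la)$, which is all \eqref{assem} needs.

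So what you actually prove is the correct content of the lemma, and it suffices for the construction:
\begin{itemize}
\item \eqref{cuore} with leading coefficient $-\frac12$;
\item the remainder bounded at rate $\la^{\frac34-\frac{p-1}{2p}}$;
\item $\|R_\la\|_p=O(\la^{\frac12-\frac{p-1}{2p}})$.
\end{itemize}
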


\begin{proof}
By \eqref{regG} and  \eqref{pro-exp1}  
we derive $$\begin{aligned}\la V(x)|x|^{2} e^{PW_\la} &
=V(x)|x|^{2} e^{W_\la +8\pi {\cal H}(x^2,b)+O(\la )} 
\\ &=V(x)|x|^{2}e^{W_\la}(1+|x|^4|b|^2-2\Re( \overline bx^2))^2+O(\la)|x|^{2}e^{W_\la}
\\ &= V(x)|x|^{2}e^{W_\la}+  \big(O(|b||x|^2)+O(\la)\big)|x|^{2}e^{W_\la}
\end{aligned}$$ uniformly for $b$ in a small neighborhood of $0$. 
According to hypothesis \eqref{pot} we deduce
$$\begin{aligned}\la V(x)|x|^{2} e^{PW_\la}  
=(\gamma_1x_1^2+\gamma_2x_2^2)|x|^{2}e^{W_\la}+\big(O(|x|^3)+O(|b||x|^2)+O(\la)\big)|x|^{2}e^{W_\la}
\end{aligned}$$  uniformly for $b$ in a small neighborhood of $0$. 
The conclusion follows by Lemma \ref{aux00}.

\end{proof}

\subsection{{\large{Analysis of the linearized operator}}}
According to Proposition \ref{esposito}, by the change of variable $x=(\frac{\la}{32})^{\frac14} y$, we immediately get that  all solutions $\psi $  of
$$
-\Delta \psi= {\la|x|^{2}\over (\frac{\la}{32}+|x^{2 }-b|^2)^2}\psi =|x|^{2} e^{W_\la}\psi\quad \hbox{in}\;\; \rr^2,\qquad \intr |\nabla \psi|^2 dx<+\infty$$
are linear combinations of the functions
$$Z^0_{\la}(x)={\frac{\la}{32}-|x^{2}-b|^2\over \frac{\la}{32}+|x^{2 }-b|^2},\ Z^1_{\la}(x)=
{ \sqrt{\frac{\la}{32}}\,\Re(x^{2 }-b)\over  \frac{\la}{32}+|x^{2 }-b|^2},\ Z^2_{\la}(x)=
{ \sqrt{\frac{\la}{32}} \,\Im(x^{2 }-b)\over  \frac{\la}{32}+|x^{2 }-b|^2}
.$$
We introduce their projections $PZ^j_{\la}$ onto $H^1_0(B_1).$ It is immediate that
 \begin{equation}\label{pz0}
PZ^0_{\la}(x)=Z^0_{\la}(x)+1+ O(\la).
\end{equation}
 and
  \begin{equation}\label{pzi}
PZ^j_{\la}(x)=Z^j _{\la}(x) + O\big(\sqrt{\la}\big),\;\; j=1,2
\end{equation}
uniformly with respect to $x\in\overline B_1$ and $b$ in a small neighborhood of $0$.

Let us consider the following linear problem: given
$ h\in H_0^1(B_1)$,  find a function $\phi\in  H^1_0(B_1)$ and constants $c_1,c_2\in\R$ satisfying
\begin{equation}\label{lla2}
\left\{\begin{aligned}&-\Delta \phi   -\la V(x)|x|^{2} e^{P{W}_\la}\phi=\Delta h+\sum_{j=1,2}c_j Z^j_{\la} |x|^{2}e^{W_{\la}}\\ &\into \nabla \phi\nabla PZ_\la^j=0\;\;j=1,2\end{aligned}\right..
\end{equation}

In order to solve problem \eqref{lla2}, we need to establish an a priori estimate. For the proof we refer to \cite{delkomu} (Proposition 3.1) or \cite{espogropi} (Proposition 3.1). 

\begin{prop}\label{inv} 
There exist $\lambda_0>0$ and $C>0$ such that for any $\la \in(0, \la_0)$, any $b\in \R^2$ with $|b|<M\sqrt\la$ and   any $h\in  H^1_0(B_1)$, if  
$(\phi,c_1,c_2)\in  H^1_0(B_1)\times \R^2$  solves  \eqref{lla2}, then the following holds $$\|\phi\|    \leq C  |\log\la |    \|h\| .$$
\end{prop}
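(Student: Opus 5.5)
We argue by contradiction, in the standard blow-up manner. Suppose there exist sequences $\la_n\to0^+$, $b_n$ with $|b_n|<M\sqrt{\la_n}$, $h_n\in H^1_0(B_1)$ and solutions $(\phi_n,c_{1,n},c_{2,n})$ of \eqref{lla2} with $\|\phi_n\|=1$ and $|\log\la_n|\,\|h_n\|\to0$. We rescale $\tilde\phi_n(y)=\phi_n((\la_n/32)^{1/4}y)$ and $\xi_n=(32/\la_n)^{1/2}b_n$, which is bounded; passing to a subsequence, $\xi_n\to\xi$. The rescaled equation has weight $32|y|^2/(1+|y^2-\xi_n|^2)^2$, up to the factor $V$ and the $1+O(1)$ correction in \eqref{poi}. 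More precisely, by \eqref{pro-exp1} this correction is $1+O(|x|^2|b|)+O(\la)$, so it tends to $1$ locally uniformly after rescaling.

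\emph{Step 1: the coefficients $c_j$.} We test the equation against $PZ^i_{\la_n}$, using \eqref{pz0}--\eqref{pzi} and the orthogonality conditions. The matrix $\big(\int Z^j_\la Z^i_\la|x|^2e^{W_\la}\big)$ is diagonal-dominant with entries of order one; this can be checked with Remark \ref{remcopycopy}. The remaining terms are controlled by $\|h_n\|$ and by $o(1)\|\phi_n\|$, using Lemma \ref{aux00} and H\"older's inequality. Hence $c_{j,n}=O(\|h_n\|)+o(1)$.

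\emph{Step 2: the limit of $\tilde\phi_n$.} The $\tilde\phi_n$ are bounded in $H^1_{loc}$, because the Dirichlet energy is scale invariant in dimension two. A weak limit $\tilde\phi$ solves \eqref{l1} with finite energy, so by Proposition \ref{esposito} it is a combination of $Z_0,Z_1,Z_2$. The orthogonality conditions pass to the limit and kill the $Z_1$ and $Z_2$ components. The $Z_0$ component, however, is not killed by any condition.

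\emph{Step 3: the $Z_0$ direction (main obstacle).} This is exactly where the factor $|\log\la|$ comes from. We follow \cite{delkomu} and \cite{espogropi}: test the equation against a suitable combination of $PZ^0_{\la_n}$ and of the projection of an auxiliary radial-type function $w_\la$ that solves $-\Delta w=|x|^2e^{W_\la}Z^0_\la$ with the correct logarithmic behaviour. The boundary correction $PZ^0=Z^0+1+O(\la)$ produces a term of size $|\log\la|^{-1}$ times the $Z_0$-coefficient, and the resulting estimate shows that this coefficient is $O(|\log\la_n|\,\|h_n\|)+o(1)\to0$. Hence $\tilde\phi\equiv0$.

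\emph{Step 4: contradiction.} Testing the equation with $\phi_n$ itself gives
$$1=\|\phi_n\|^2=\int_{B_1}\la_nV|x|^2e^{PW_{\la_n}}\phi_n^2+O(\|h_n\|)+\sum_j c_{j,n}\int_{B_1} Z^j|x|^2e^{W}\phi_n .$$
The last sum is $o(1)$ by Step 1. For the first integral we use Lemma \ref{aux00} with $p$ close to $1$ together with the Moser--Trudinger bound (Lemma \ref{tmt}) for $\phi_n^2$ in weighted $L^q$. The local convergence $\tilde\phi_n\to0$ from Steps 2--3 makes this integral $o(1)$. Altogether $1=o(1)$, which is absurd.

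The delicate points are the tail control of the weighted integral, which needs a uniform weighted $L^2$ bound for $\tilde\phi_n$ on all of $\R^2$ via Hardy-type inequalities on the rescaled domain, and above all Step 3, which we expect to be the main obstacle.
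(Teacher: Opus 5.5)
Your skeleton (contradiction with $\|\phi_n\|=1$ and $|\log\la_n|\,\|h_n\|\to0$, blow-up at scale $\de_n=(\la_n/32)^{1/4}$, Proposition \ref{esposito}, orthogonality killing $Z_1,Z_2$) is the argument the paper delegates to \cite{delkomu}, \cite{espogropi}. As written, however, it has gaps exactly where the difficulty lies.

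\textbf{Step 2 is unjustified.} Scale invariance only gives $\int|\nabla\tilde\phi_n|^2\le1$. On expanding domains this does not bound $\tilde\phi_n$ in $L^2_{loc}$: for instance, $\min\{\log\frac1{|x|},\log\frac1{\de_n}\}/\sqrt{2\pi\log(1/\de_n)}$ has unit energy, yet it equals $\sqrt{\log(1/\de_n)/2\pi}\to\infty$ on the whole bubble region.

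What you need is the identity from your Step 4, obtained by testing \eqref{lla2} with $\phi_n$. Since $\into|x|^2e^{W_\la}Z^j_\la\phi_n=\into\nabla PZ^j_\la\nabla\phi_n=0$, the $c_j$-terms vanish identically, and $\into\la_nV|x|^2e^{PW_{\la_n}}\phi_n^2=1+O(\|h_n\|)$. This bounds $\int\rho_n\tilde\phi_n^2$, where $\rho_n(y)=32|y|^2(1+|y^2-\xi_n|^2)^{-2}$. Together with the energy bound, this gives the $H^1_{loc}$ bounds.

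It also gives the tail control you left open, which H\"older with Lemma \ref{aux00} cannot provide because the factor $\la^{-\frac{p-1}{2p}}$ blows up. After the Kelvin inversion $u=y/|y|^2$, the weight becomes $\approx 32|u|^2$ near $u=0$. Hence $\int_{|y|>R}\rho_n\tilde\phi_n^2\le C\||u|^2\|_{L^2(B_{1/R})}\|\hat\phi_n\|^2_{L^4(B_1)}\le CR^{-3}$. Consequently $\into\la_nV|x|^2e^{PW_{\la_n}}\phi_n^2\to\int\rho\tilde\phi^2$ with $\tilde\phi=\gamma Z_0$, and the whole proof reduces to showing $\gamma=0$.

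\textbf{Step 3 is not carried out, and the mechanism you describe cannot give it.} Testing with $PZ^0_\la=Z^0_\la+1+O(\la)$ only yields $\into\la V|x|^2e^{PW_\la}\phi_n=o(1)$. Its limit, $\gamma\int\rho Z_0=0$, holds for every $\gamma$, because $\int\rho Z_0=0$.

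What works here is a single test function. With $\mu^2=\la/32$, let $w(x)=g(x^2)$, where
\[g(z)=\tfrac23\log(\mu^2+|z-b|^2)\,\frac{\mu^2-|z-b|^2}{\mu^2+|z-b|^2}+\tfrac43\,\frac{\mu^2}{\mu^2+|z-b|^2}.\]
A direct computation gives $\Delta w+|x|^2e^{W_\la}w=|x|^2e^{W_\la}Z^0_\la$. On $\partial B_1$ one has $|x^2|=1$ and $|b|<M\sqrt\la$, so $w=O(\sqrt\la)$ there. Hence $Pw=w+O(\sqrt\la)$ in $B_1$, while $\|Pw\|=O(|\log\la|)$.

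Now test \eqref{lla2} with $Pw$, using:
\begin{itemize}
\item $\la V|x|^2e^{PW_\la}-|x|^2e^{W_\la}=O(|x|^2+\la)\,|x|^2e^{W_\la}$, together with Lemma \ref{aux00};
\item $c_j=O(1)$, from your Step 1;
\item $\into|x|^2e^{W_\la}Z^j_\la Pw=O(\sqrt\la)$, by oddness in $z-b$.
\end{itemize}
This gives
\[\into|x|^2e^{W_{\la_n}}Z^0_{\la_n}\phi_n=O\big(|\log\la_n|\,\|h_n\|\big)+o(1)\to0,\]
i.e.\ $\gamma\int\rho Z_0^2=0$. So $\gamma=0$, and your Step 4 closes.

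The $|\log\la|$ in the statement is the size of $\|Pw\|$, not a factor $|\log\la|^{-1}$ produced by $PZ^0_\la$. A $PZ^0_\la$-test is only useful to absorb $\into\la V|x|^2e^{PW_\la}\phi_n$ when $Pw-w$ is of order one (bubble off the centre, general domains). That does not happen here, because the bubble sits at the centre of $B_1$.
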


\

Next, for any $ p>1,$ let $$i^*_{p}:L^{p}(B_1)\to H^1_0(B_1)$$ be the
adjoint operator of the embedding
$i_{p}:H^1_0(B_1)\hookrightarrow L^{p\over p-1 }(B_1),$ i.e.
$u=i^*_{p}(v)$ if and only if $-\Delta u=v$ in $B_1,$ $u=0$ on
$\partial B_1.$ We point out that $i^*_{p}$ is a continuous
mapping, namely
\begin{equation}
\label{isp} \|i^*_{p}(v)\| \le c_{p} \|v\|_{p}, \ \hbox{for any} \ v\in L^{p}(B_1),
\end{equation}
for some constant $c_{p}$ which depends on $p.$
Next let us set
$$      {K }:=\hbox{span}\left\{PZ^1_{\la},\ PZ^2_{\la}\right\}$$
and
$$      {K ^\perp }:= \left\{\phi\in H^1_0(B_1)\ :\ \into \nabla \phi \nabla PZ^1_{\la}dx= \into \nabla \phi \nabla PZ^2_{\la}dx= 0\right\} $$
and  denote by
$$     \Pi : H^1_0(B_1)\to       {K },\qquad      {\Pi ^\perp}: H^1_0(B_1)\to       {K ^\perp }$$
the corresponding projections.
Let $      L: K^\perp \to K^\perp$  be the
linear operator defined by
\beq\label{elle}
      L(     \phi):=  \Pi^{\perp}\Big(   {i^*_{p}}\big(       \la V(x)|x|^{2}e^{PW_\la}     \phi \big)\Big) - \phi. 
\eeq
Notice that problem \eqref{lla2} reduces to $$L(\phi)=\Pi^\perp h, \quad \phi\in K^\perp.$$
 
 As a consequence of Proposition \ref{inv} we derive the invertibility of $L$.

\begin{prop}\label{ex} For any $p>1$ there exist $\lambda_0>0$ and $C>0$ such that for any $\la \in(0, \la_0)$, any $b\in\R^2$ with $|b|<M\sqrt\la$ and 
 any $h\in K^\perp$  there is a unique solution $ \phi\in K^\perp$ to the problem $$L(\phi)=h.$$ In
particular, $L$ is invertible; moreover, $$\| L^{-1} \| \leq C
|\log \lambda |.$$

\end{prop}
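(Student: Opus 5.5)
The plan is the standard one: deduce Proposition \ref{ex} from the a priori estimate in Proposition \ref{inv} together with the Fredholm alternative. First I will check that $L$ is a compact perturbation of $-\mathrm{Id}$ on $K^\perp$. For $\phi\in K^\perp$, the Moser--Trudinger inequality (Lemma \ref{tmt}) and the bound \eqref{aux00esti1} give
$$\|\la V|x|^2e^{PW_\la}\phi\|_p\le \|\la V|x|^2 e^{PW_\la}\|_{r}\,\|\phi\|_{s}, \qquad \tfrac1r+\tfrac1s=\tfrac1p,$$
and $\|\phi\|_s\le C\|\phi\|$ by Sobolev embedding. The embedding $H^1_0(B_1)\hookrightarrow L^s(B_1)$ is compact, $i^*_p$ is continuous by \eqref{isp}, and $\Pi^\perp$ is bounded. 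Hence $\phi\mapsto \Pi^\perp i^*_p(\la V|x|^2e^{PW_\la}\phi)$ is compact from $K^\perp$ into $K^\perp$, so $L=-\mathrm{Id}+\text{compact}$ is Fredholm of index zero. It is therefore enough to show that $L$ is injective and to bound its inverse.

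Next I will identify $L(\phi)=h$, with $h\in K^\perp$, with problem \eqref{lla2}. The relation $L(\phi)=h$ means $i^*_p(\la V|x|^2e^{PW_\la}\phi)-\phi-h\in K$, i.e. it equals $\sum_j \tilde c_j PZ^j_\la$. Applying $-\Delta$ and using $-\Delta PZ^j_\la=|x|^2e^{W_\la}Z^j_\la$ (the $Z^j_\la$ solve the linearized equation on $\R^2$, and projection preserves the Laplacian), I get
$$-\Delta\phi-\la V|x|^2e^{PW_\la}\phi=\Delta h+\sum_j c_j Z^j_\la|x|^2e^{W_\la},\qquad c_j=-\tilde c_j.$$
The orthogonality conditions in \eqref{lla2} hold because $\phi\in K^\perp$. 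So $(\phi,c_1,c_2)$ solves \eqref{lla2} with datum $h$.

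Proposition \ref{inv} then gives $\|\phi\|\le C|\log\la|\,\|h\|$, uniformly in $\la\in(0,\la_0)$ and $|b|<M\sqrt\la$. Taking $h=0$ yields $\phi=0$, so $L$ is injective. By the Fredholm alternative $L$ is then surjective, hence bijective on $K^\perp$, and the same estimate reads $\|L^{-1}h\|\le C|\log\la|\,\|h\|$. This is the claim.

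The only delicate point is making the compactness step and the identification with \eqref{lla2} rigorous, in particular justifying that $\la V|x|^2e^{PW_\la}\phi\in L^p$ with the right exponents. Since $e^{PW_\la}$ is bounded for fixed $\la$ (by \eqref{pro-exp1}), this follows directly from Lemma \ref{aux00} or even simply from Sobolev embedding. All of the real analytic work sits in Proposition \ref{inv}, which is assumed.
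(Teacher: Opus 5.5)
Your proposal is correct and follows the paper's proof: compactness of $\phi\mapsto\Pi^\perp i^*_p(\la V|x|^2e^{PW_\la}\phi)$ on $K^\perp$, identification of $L(\phi)=h$ with \eqref{lla2}, injectivity from Proposition \ref{inv}, the Fredholm alternative, and then the bound $\|L^{-1}\|\le C|\log\la|$ again from Proposition \ref{inv}. You also write out the identification step for general $h$ (with $c_j=-\tilde c_j$ via $-\Delta PZ^j_\la=|x|^2e^{W_\la}Z^j_\la$), which the paper leaves implicit when it says the norm estimate follows directly.
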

\begin{proof}  Observe that the operator $\phi\mapsto \Pi^\perp\big( {i^*_{p}}\(       \la V(x)|x|^{2}e^{PW_\la}     \phi \)\big)$ is a compact operator in $K^\perp$.
   Let us consider the case $h=0$, and take $\phi\in K^\perp$ with $L(\phi)=0$. In other words,  $\phi$ solves
the system \eqref{lla2} with $h=0$ for some $c_1,c_2\in\R$. Proposition \ref{inv} implies $\phi\equiv 0$. Then, Fredholm's alternative implies the existence and uniqueness result.

Once we have existence, the norm estimate follows directly from Proposition \ref{inv}.
\end{proof}

\subsection{{\large{The nonlinear problem: a contraction argument}}}
In order to find a blowing up sequence for problem \eqref{eq0}--\eqref{eqq0} let us consider the following intermediate problem:

\beq\label{inter}\left\{\begin{aligned}&-\Delta(PW_\la+\phi)-\la V(x) |x|^{2}e^{PW_\la+\phi}=\sum_{j=1,2}c_j Z_\la^j|x|^{2} e^{W_\la},\\ &\phi \in H^1_0(B_1),\;\;\;\; \into \nabla \phi\nabla PZ_\la^jdx=0 \; \; j=1,2.\end{aligned}\right.\eeq
 
 Then it is convenient to solve as a first step the problem for $\phi$ as a function of $b$. 
 To this aim, first let us rewrite problem \eqref{inter} in a more convenient way. 
 
 In what follows we denote by $N:K^\perp\to K^\perp$ the nonlinear operator 
$$N(\phi)=\Pi^\perp\({i^*_{p}}\big(       \la V(x)|x|^{2}e^{PW_\la}(e^{\phi}-1-    \phi) \big)\).$$
 Therefore problem \eqref{inter} turns out to be equivalent to the problem
 
 \beq\label{interop}  L(\phi)+N(\phi)=\tilde R,\quad \phi\in K^\perp\eeq
 where, recalling Lemma \ref{aux},  $$\tilde R=\Pi^\perp\({i^*_{p}}\big(R_\la\big)\)=
  \Pi^\perp\(PW_\la -{i^*_{p}}\big(\la|x|^{2}V(x)e^{P W_\la}\big)\).$$
 
 We need the following auxiliary lemma.
 
 \begin{lemma}\label{auxnonl} 
 For any $p> 1$ there exists $\la_0>0$ such that   for any $\la\in (0,\la_0)$, any $b\in \R^2$ with $|b|\leq M \sqrt\la$  and any $\phi_1,\phi_2\in H_0^1(B_1)$ with $\|\phi\|_1,\,\|\phi_2\|<1$ the following holds
\beq\label{skate1}\|e^{\phi_1}-\phi_1-e^{\phi_2}+\phi_2\|_p\leq C\big(\|\phi_1\|+\|\phi_2\|\big)\|\phi_1-\phi_2\|,\eeq
 \beq\label{skate2}\|N(\phi_1)-N(\phi_2)\|\leq C\la^{\frac{1-p^2}{ 2p^2}}\big(\|\phi_1\|+\|\phi_2\|\big)\|\phi_1-\phi_2\|.\eeq
 
 \end{lemma}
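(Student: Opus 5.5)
For \eqref{skate1} I will use a pointwise mean value inequality. For \eqref{skate2} I will combine \eqref{skate1} with H\"older's inequality and the weighted bound of Lemma \ref{aux00}, moving the weight into $L^{pq}$ and the nonlinearity into $L^{pq'}$.

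\textbf{Step 1: proof of \eqref{skate1}.} Set $g(t)=e^t-1-t$, so that $g'(t)=e^t-1$. By the mean value theorem, with $\theta\in(0,1)$ depending on $x$,
$$|g(\phi_1)-g(\phi_2)|=|e^{\theta\phi_1+(1-\theta)\phi_2}-1|\,|\phi_1-\phi_2|.$$
Since $|e^s-1|\le |s|e^{|s|}$, we obtain pointwise
$$|g(\phi_1)-g(\phi_2)|\le (|\phi_1|+|\phi_2|)\,e^{|\phi_1|+|\phi_2|}\,|\phi_1-\phi_2|.$$
I then apply the generalized H\"older inequality with three factors in $L^{3p}$:
$$\|g(\phi_1)-g(\phi_2)\|_p\le \big\||\phi_1|+|\phi_2|\big\|_{3p}\,\|e^{|\phi_1|+|\phi_2|}\|_{3p}\,\|\phi_1-\phi_2\|_{3p}.$$
The first and last factors are controlled by Sobolev embedding, giving $C(\|\phi_1\|+\|\phi_2\|)$ and $C\|\phi_1-\phi_2\|$. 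For the middle factor, $|\phi_1|+|\phi_2|\in H^1_0(B_1)$ with norm at most $\|\phi_1\|+\|\phi_2\|<2$. Lemma \ref{tmt} then bounds it by $C_pe^{\frac{3p}{16\pi}\cdot 4}$, a constant. This gives \eqref{skate1} (I read the hypothesis $\|\phi\|_1$ as $\|\phi_1\|$). Note that \eqref{skate1} is really the statement for $e^{\phi}-1-\phi$; the constant $1$ cancels in the difference.

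\textbf{Step 2: proof of \eqref{skate2}.} Since $\Pi^\perp$ is an orthogonal projection, \eqref{isp} gives
$$\|N(\phi_1)-N(\phi_2)\|\le c_p\,\big\|\la V|x|^2e^{PW_\la}\,(g(\phi_1)-g(\phi_2))\big\|_p.$$
I split with H\"older using exponents $q,q'$ to be fixed, applying $L^{pq}$ to the weight and $L^{pq'}$ to the nonlinear part. For the weight, $V$ is bounded, and Lemma \ref{aux00} with $s=0$ and exponent $pq$ gives
$$\|\la |x|^2e^{PW_\la}\|_{pq}\le C\la^{-\frac{pq-1}{2pq}}.$$
For the nonlinear part, Step 1 applied with exponent $pq'$ gives $C(\|\phi_1\|+\|\phi_2\|)\|\phi_1-\phi_2\|$.

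\textbf{Step 3: choosing the exponents.} The required power is $\la^{\frac{1-p^2}{2p^2}}=\la^{-\frac{p^2-1}{2p^2}}$. This matches $\la^{-\frac{pq-1}{2pq}}$ exactly when $q=p$, with $q'=p/(p-1)$. The weight is then estimated in $L^{p^2}$ and $g(\phi_1)-g(\phi_2)$ in $L^{p^2/(p-1)}$, which yields the stated exponent.

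The only delicate point is keeping the Moser--Trudinger exponential in Step 1 uniformly bounded. This is why the hypothesis $\|\phi_i\|<1$ is needed: it makes the resulting constant independent of $\la$ and $b$. The uniformity in $|b|\le M\sqrt\la$ is already built into Lemma \ref{aux00}.
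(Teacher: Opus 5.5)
Your proposal is correct and follows the paper's proof essentially step for step. For \eqref{skate1} you use the same pointwise bound $|e^a-a-e^b+b|\le e^{|a|+|b|}(|a|+|b|)|a-b|$ with a three-factor H\"older inequality, Moser--Trudinger and Sobolev embedding (taking all exponents equal to $3p$ is just one instance of the paper's $\frac1q+\frac1r+\frac1t=1$). For \eqref{skate2} you use \eqref{isp} and then H\"older with the weight in $L^{p^2}$ (Lemma \ref{aux00} with $s=0$) and the nonlinearity in $L^{p^2/(p-1)}$, which is exactly the paper's choice of exponents.
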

 \begin{proof}
A straightforward computation gives that  the inequality $|e^a-a-e^b+b|\leq e^{|a|+|b|}(|a|+|b|)|a-b|$ holds  for all $a,b\in \R$. Then, by applying H\"older's inequality with $\frac1q+\frac1r+\frac1t=1$, we derive
$$\|e^{\phi_1}-\phi_1-e^{\phi_2}+\phi_2\|_p\leq C\|e^{|\phi_1|+|\phi_2|}\|_{pq}\big(\|\phi_1\|_{pr}+\|\phi_2\|_{pr}\big)\|\phi_1-\phi_2\|_{pt}$$
 and \eqref{skate1} follows by using Lemma \ref{tmt} and the continuity of the embeddings $H^1_0(B_1)\subset L^{pr}(B_1)$ and $H^1_0(B_1)\subset L^{pt}(B_1)$. 
  Let us prove \eqref{skate2}. According to \eqref{isp}  we get
 $$\|N(\phi_1)-N(\phi_2)\|\leq C\|\la V(x)|x|^{2} e^{PW_\la}(e^{\phi_1}-\phi_1-e^{\phi_2}+\phi_2)\|_p,$$
and by H\"older's inequality with $\frac1p+\frac1q=1$ we derive 
$$\begin{aligned}\|N(\phi_1)-N(\phi_2)\|&\leq C\|\la |x|^{2}e^{PW_\la}\|_{p^2}\|e^{\phi_1}-\phi_1-e^{\phi_2}+\phi_2|\|_{pq}\\ &\leq C\|\la |x|^{2}e^{PW_\la}\|_{p^2}\big(\|\phi_1\|+\|\phi_2\|\big)\|\phi_1-\phi_2\|\end{aligned}
$$ by \eqref{skate1}, and  the conclusion follows recalling Lemma \ref{aux00}. 
 \end{proof}
Problem \eqref{inter} or, equivalently, problem \eqref{interop}, turns out to be solvable for any choice of point $b$ with $|b|\leq M\sqrt\la$, provided
that $\la$ is sufficiently small. Indeed we have the following result.

\begin{prop}\label{nonl} 
For any $\e\in (0,\frac14)$ there exists $\la_0>0$ such that for any $\la\in (0,\la_0)$ and any $b\in \R^2$ with $|b|<M\sqrt\la$ there is a unique $\phi_\la=\phi_{\la,b}\in K^\perp$ satisfying \eqref{inter} for some $c_1,c_2\in \R$ and  
$$\|\phi_{\la}\|\leq \la^{\frac34-\e}.$$ 
\end{prop}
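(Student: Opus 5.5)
Rewrite \eqref{interop} as a fixed point problem: since $L$ is invertible on $K^\perp$ (Proposition \ref{ex}), $\phi$ solves \eqref{interop} if and only if
$$\phi=T(\phi):=L^{-1}\big(\tilde R-N(\phi)\big),\qquad \phi\in K^\perp .$$
We will show that $T$ is a contraction on the ball
$$\mathcal B:=\{\phi\in K^\perp:\ \|\phi\|\le \la^{\frac34-\e}\}$$
for $\la$ small, uniformly in $|b|<M\sqrt\la$. The Banach fixed point theorem then gives existence and uniqueness in $\mathcal B$. The constants $c_1,c_2$ are recovered as usual from the $\Pi$-component, since $\Pi$ of the equation is spanned by the $K$-directions, which correspond to the terms $c_jZ^j_\la|x|^2e^{W_\la}$.

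The first step is to control the data $\tilde R$. By \eqref{isp} and Lemma \ref{aux},
$$\|\tilde R\|\le c_p\|R_\la\|_p\le C\la^{\frac34-\frac{p-1}{2p}}.$$
Given $\e$, we choose $p>1$ close enough to $1$ that $\frac{p-1}{2p}<\e/4$ and also $\frac{p^2-1}{2p^2}<\e/4$. Combining with $\|L^{-1}\|\le C|\log\la|$ gives
$$\|L^{-1}\tilde R\|\le C|\log\la|\,\la^{\frac34-\frac\e4}\le \tfrac12\la^{\frac34-\e}$$
for small $\la$.

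Next, the nonlinear term. Since $N(0)=0$, estimate \eqref{skate2} with $\phi_2=0$ yields, for $\phi\in\mathcal B$,
$$\|L^{-1}N(\phi)\|\le C|\log\la|\,\la^{-\frac\e4}\la^{2(\frac34-\e)}=o(\la^{\frac34-\e}),$$
because $\frac34-\e-\frac\e4>0$ when $\e<\frac14$. Hence $T(\mathcal B)\subset\mathcal B$. For the contraction, again by \eqref{skate2},
$$\|T(\phi_1)-T(\phi_2)\|\le C|\log\la|\,\la^{-\frac\e4}\,2\la^{\frac34-\e}\,\|\phi_1-\phi_2\|\le\tfrac12\|\phi_1-\phi_2\|$$
for $\la$ small. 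Note that $\|\phi_i\|<1$ holds on $\mathcal B$, so Lemma \ref{auxnonl} indeed applies.

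The main point to watch is the bookkeeping of the exponent losses. The $|\log\la|$ factor from Proposition \ref{ex} and the $\la^{-(p^2-1)/(2p^2)}$ factor from the $L^{p^2}$ norm of the concentrating weight must both be absorbed. This is why $p$ has to be chosen depending on $\e$, and why $\la_0$ then depends on $\e$. One also checks that all constants, including $\la_0$ from Proposition \ref{ex} and Lemma \ref{auxnonl}, are uniform for $|b|<M\sqrt\la$. This uniformity also makes $b\mapsto\phi_{\la,b}$ well defined, as needed for the subsequent reduced problem.
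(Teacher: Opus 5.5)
Your proposal is correct and follows the paper's route: a Banach fixed point for $T=L^{-1}(\tilde R-N(\phi))$ on the ball $\{\phi\in K^\perp:\|\phi\|\le\la^{\frac34-\e}\}$. It uses the same ingredients, namely \eqref{isp}, Lemma \ref{aux}, \eqref{skate2} and the bound $\|L^{-1}\|\le C|\log\la|$. Your choice of $p$, which makes the exponent losses smaller than $\e/4$, plays the role of the paper's auxiliary $\eta\in(0,\e)$.
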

\begin{proof} Since, as we have observed, problem \eqref{interop} is equivalent to problem \eqref{inter}, 
we will show that problem \eqref{interop} can be solved via a contraction mapping argument. Indeed, in virtue of Proposition \ref{ex}, let us introduce the map
$$T:=L^{-1}(\tilde R-N(\phi)),\quad \phi\in K^\perp.$$
 Let us fix $$0<\eta<\e$$ and $p>1$ sufficiently close to 1. According to \eqref{isp} and Lemma \ref{aux} we have
\beq\label{non1}\|\tilde R\|=O(\la^{\frac34-\eta}).\eeq
Similarly, by \eqref{skate2}, \beq\label{non2}\|N(\phi_1)-N(\phi_2)\|\leq C\la^{-\eta}(\|\phi_1\|+\|\phi_2\|)\|\phi_1-\phi_2\|\quad \forall \phi_1,\phi_2\in H_0^1(B_1), \|\phi_1\|,\|\phi_2\|<1.\eeq In particular, by taking $\phi_2=0$, 
\beq\label{non3}\|N(\phi)\|\leq C\la^{-\eta}\|\phi\|^2\quad \forall \phi\in H_0^1(B_1), \|\phi\|<1.\eeq

We claim that $T$ is a contraction map over the ball $$\Big\{\phi\in K^\perp\,\Big|\, \|\phi\|\leq \la^{\frac34-\e}\Big\}$$ provided that $\la$ is small enough. Indeed, combining  Proposition \ref{ex}, \eqref{non1}, \eqref{non2}, \eqref{non3} with the choice of $\eta$,
we have 
$$\|T(\phi)\|\leq C|\log\la|(\la^{\frac34-\eta}+\la^{-\eta}\|\phi\|^2)<\la^{\frac34-\e},$$
$$\begin{aligned}\|T(\phi_1)-T(\phi_2)\|&\leq C|\log\la|\|N(\phi_1)-N(\phi_2)\|&\leq C\la^{-\eta}|\log\la| (\|\phi_1\|+\|\phi_2\|)\|\phi_1-\phi_2\|\\ &<\frac12\|\phi_1-\phi_2\|.\end{aligned}$$

\end{proof}

\subsection*{{\large{Proof of Theorems \ref{main1}}}} 
After problem \eqref{inter} has been solved according to Proposition \ref{nonl}, then we find a solution to the original problem \eqref{eq1} if $b$   is such that $$c_j=0\hbox{ for }j=1,2.$$ 
 Let us find the condition satisfied by $b$ in order to get the $c_j$'s equal to zero. 

We multiply the equation in \eqref{inter} by   $PZ_\la^j$ and integrate over $B_1$:
\beq\label{masca}\begin{aligned}\into \nabla (PW_\la+\phi_{\la}) \nabla PZ^j_{\la} dx &-\la \into V(x) |x|^{2}e^{P W_\la+\phi_{\la}}PZ^j_{\la} dx\\&=\sum_{h=1,2}c_h \into Z_\la^h|x|^{2} e^{W_\la}PZ_\la^j dx.\end{aligned}
\eeq
The object  is now to expand each integral of the above identity and analyze the  leading term.
 Let us begin by observing that  the orthogonality in \eqref{inter} gives
\beq\label{masca1} \into \nabla \phi_{\la} \nabla PZ^j_{\la} dx =\into |x|^{2} e^{W_\la} \phi_{\la}Z_\la^j dx=0\eeq
and, using  \eqref{pzi}, by a direct computation we obtain\beq\label{masca2}\into Z_\la^h|x|^{2} e^{W_\la}PZ_\la^j dx= \intr Z_\la^jZ_\la^h|x|^{2} e^{W_\la}dx+o(1)=\left\{\begin{aligned}&\frac43 {\pi}+o(1) &\hbox{ if }h=j\\ &o(1)&\hbox{ if }h\neq j\end{aligned}\right..\eeq
Using the expansion \eqref{cuore}  and Lemma \ref{aux00} 
we get
\beq\label{coll}\begin{aligned}\int_{B_1} \nabla PW_\la &\nabla PZ^j_{\la} dx -\la \int_{B_1} V(x) |x|^{2}e^{P W_\la}PZ^j_{\la} dx\\ &=
-\int_{B_1} R_\la PZ^j_{\la} dx\\ &=
-\int_{B_1} |x|^{2}e^{W_\la} \Big(\gamma_1 x_1^2+\gamma_2 x_2^2+O(|x|^3)+O(\sqrt\la|x|^2)+O(\la) \Big)PZ^j_{\la} dx
\\ &= -\int_{B_1} |x|^{2}(\gamma_1 x_1^2+\gamma_2 x_2^2)e^{W_\la}PZ^j_{\la} dx+ O(\la^\frac34)
\\ &=-
\intr  |x|^{2}(\gamma_1 x_1^2+\gamma_2 x_2^2)e^{W_\la}Z^j_{\la} dx
+ O(\la^{\frac34})\end{aligned}\eeq uniformly for $|b|\leq M\sqrt\la$, where for the last estimates we have used \eqref{pzi}.

Let us pass to examine  the term containing the function $\phi_\la$: recalling the orthogonality \eqref{masca1} and Lemma \ref{aux}
\beq\label{mos}\begin{aligned} \la\int_{B_1}|x|^2 V(x) e^{P W_\la}(e^{\phi_\la}-1)PZ^i_{\la} dx&= -\into R_\la
(e^{\phi_\la}-1)PZ^i_{\la} dx
\\ &\;\;\;\;+ \int_{B_1}  |x|^2e^{W_\la}(e^{\phi_\la}-1-\phi_\la)PZ^i_{\la}  dx\\ &\;\;\;\;+ \into  |x|^2e^{W_\la}\phi_\la(PZ^i_{\la}-Z_\la^i)  dx.
\end{aligned}\eeq
Now, let us fix $\e>0$ sufficiently small and $p>1$ sufficiently close to 1.  
Next let  $1<q<\infty$ be such that  $\frac1p+\frac1q=1$. Then, 
\eqref{skate1} with $\phi_2=0$ and Proposition \ref{nonl} give
$$\|e^{\phi_\la}-1-\phi_\la\|_q\leq  C\|\phi_\la\|^2\leq C \la^{\frac32-2 \e} 
$$
and, consequently, 
\beq\label{tmtsur}\|e^{\phi_\la}-1\|_q\leq C\|\phi_\la\|\leq C\la^{\frac34-\e}
.\eeq
Therefore,  Lemma \ref{aux00}  implies
 \beq\label{capra1}\begin{aligned} 
 \int_{B_1} |x|^2e^{W_\la}(e^{\phi_\la}-1-\phi_\la)PZ^i_{\la} dx&
= O(\|  |x|^2e^{ W_\la}(e^{\phi_\la}-1-\phi_\la)\|_1)\\ &=
O(\|   |x|^2e^{W_\la} \|_p\|e^{\phi_\la}-1-\phi_\la\|_q)\\ &=O\Big(\la^{\frac32-\frac{p-1}{2p}-2\e} \Big).
\end{aligned}\eeq
Now, by Lemma \ref{aux00}
 \beq\label{capra2}\begin{aligned}  \into R_\la (e^{\phi_\la}-1) 
 PZ^i_{\la} dx&=
O\big(\big\|  
R_\la (e^{\phi_\la}-1)\big\|_1\big)
=O\big(\big\| 
R_\la\|_p\|e^{\phi_\la}-1\|_q\big)\\ &= O\Big(\la^{\frac32-\frac{p-1}{2p}-\e}\Big).\end{aligned}\eeq
Finally by \eqref{pzi}   and  Lemma \ref{aux00}
 \beq\label{capra3}\begin{aligned}  \into  |x|^2e^{W_\la}\phi_\la(PZ^i_{\la}-Z_\la^i)  dx&=O(\sqrt\la) \int_{B_1} |x|^2e^{W_\la}|\phi_\la|  dx
 \\ &=O(\sqrt\la\||x|^2e^{W_\la}\|_p\|\phi_\la\|)
 \\ &=O(\la^{\frac12-\frac{p-1}{2p}+\frac34-\e}). \end{aligned}\eeq
 By inserting \eqref{capra1}-\eqref{capra2}-\eqref{capra3} into \eqref{mos}, 
 we obtain
 \beq\label{capra}\la \int_{B_1} V\big(x\big) |x|^2e^{P W_\la}(e^{\phi_\la}-1)PZ^i_{\la} dx= o(\la) \eeq  uniformly for $|b|\leq M\sqrt\la$, provided that that $\e$ is chosen sufficiently close to $0$ and $p$ sufficiently close to $1$.

In order to conclude, combining \eqref{masca1}, \eqref{masca2}, \eqref{coll} and  \eqref{capra},  
 the identities \eqref{masca} 
can be rewritten as
\beq\label{assem}\begin{aligned} &-\intr |x|^2(\gamma_1 x_1^2+\gamma_2 x_2^2)e^{W_\la}Z_\la^j dx+o(\sqrt\la)
=\frac{4}{3}\pi c_j+o(c_1)+o(c_2), \quad j=1,2\end{aligned}\eeq uniformly for $|b|\leq M\sqrt\la$.
Let us observe that, by applying the change of variable first $x=(\frac{\la}{32})^{\frac14}y$ and next using Remark \ref{remcopycopy},  $$\begin{aligned}&\intr |x|^2(\gamma_1 x_1^2+\gamma_2 x_2^2)e^{W_\la}Z_\la^1 dx\\&=\frac12\intr |x|^2\big((\gamma_1-\gamma_2)\Re(x^2)+(\gamma_1+\gamma_2)|x|^2\big)e^{W_\la}Z_\la^j dx
\\ &
=\frac{\la\sqrt\la}{8\sqrt2} \intr |x|^2\frac{(\gamma_1-\gamma_2)\Re(x^2)+(\gamma_1+\gamma_2)|x|^2}{(\frac{\la}{32}+|x^2-b|^2)^3}\Re(x^2-b)dx
\\ & =2\sqrt{2\la} \intr |y|^2\frac{(\gamma_1-\gamma_2)\Re(y^2)+(\gamma_1+\gamma_2)|y|^2}{(1+|y^2-\sqrt{32}\la^{-\frac12}b|^2)^3}\Re(y^2-\sqrt{32}\la^{-\frac12}b)dy
\\ &=\sqrt{2\la}\intr \frac{(\gamma_1-\gamma_2)z_1+(\gamma_1+\gamma_2)|z|}{(1+|z-\sqrt{32}\la^{-\frac12}b|^2)^3}(z_1-\sqrt{32}\la^{-\frac12}b_1)dz
\\ &=(\gamma_1-\gamma_2)\pi\frac{\sqrt{2\la}}{4}+\sqrt{2\la}(\gamma_1+\gamma_2){\cal F}_1(\sqrt{32}\la^{-\frac12}b) \end{aligned}
$$ where we have used that $\intr \frac{z_1^2}{(1+|z|^2)^3}dz=\frac{\pi}{4}$ and ${\cal F}=({\cal F}_1,{\cal F}_2)$ is the vector field defined in Corollary \ref{finalcor}.  Similarly
$$\begin{aligned}&\intr |x|^2(\gamma_1 x_1^2+\gamma_2 x_2^2)e^{W_\la}Z_\la^2 dx=\sqrt{2\la}(\gamma_1+\gamma_2){\cal F}_2(\sqrt{32}\la^{-\frac12}b).\end{aligned}
$$
Therefore, \eqref{assem} admits the following equivalent formulation:
\beq\label{tccone}\begin{aligned}\frac{\gamma_1-\gamma_2}{\gamma_1+\gamma_2}\pi\frac{\sqrt{2\la}}{4}\mathbf{e}_1+\sqrt{2\la}{\cal F}(\sqrt{32}\la^{-\frac12}b)+o(\sqrt\la)&=\frac{4}{3}\pi c+ o(|c|)\\ 
\end{aligned}\;\;
\hbox{ unif. for }|b|\leq M\sqrt\la\eeq
where ${\mathbf e}_1=(1,0)$ is the first canonical basis vector of  $\R^2$ and  $c=(c_1,c_2)$. 
Now, setting
$$\tilde b=\sqrt{32}\la^{-\frac12} b$$ we rewrite \eqref{tccone} as \beq\label{tccone1} \begin{aligned}\frac{\gamma_1-\gamma_2}{\gamma_1+\gamma_2}\pi\frac{\sqrt{2\la}}{4}{\mathbf e}_1+\sqrt{2\la}{\mathcal F}(\tilde b)+o(\sqrt\la)&= \frac{4}{3}\pi c+ o(|c|)
\end{aligned}\;\;\hbox{ unif. for }|\tilde b|\leq M\sqrt{32}.\eeq
The continuity of the map $b\mapsto \phi_\la=\phi_{\la,b}$ guaranteed by Proposition \ref{nonl}  implies that the term $o(\sqrt\la)$ on the left hand
side of \eqref{tccone1} is continuous too.  By assumption \eqref{pot} we deduce that  $\frac{\gamma_1-\gamma_2}{\gamma_1+\gamma_2}\in (-1,1)$. Then, according to Lemma \ref{menoo} let $\tilde b_1^*\in\R$ be such that $$J(\tilde b_1^*)= -\frac{\pi}{4}\frac{\gamma_1-\gamma_2}{\gamma_1+\gamma_2}=-\frac\pi4\frac{\gamma_1^2-\gamma_2^2}{(\gamma_1+\gamma_2)^2};$$ in particular \beq\label{rmmmm}\tilde b_1^*<0\hbox{ if } \gamma_1^2>\gamma_2^2,\quad \tilde b_1^*>0\hbox{ if } \gamma_1^2<\gamma_2^2,\quad\tilde b_1^*=0\hbox{ if }\gamma_1=\gamma_2.\eeq  Then, setting $\tilde b^*=(\tilde b^*_1,0)$, by Corollary \ref{finalcor}, ${\cal F}(\tilde b^*)=(J(\tilde b_1^*),0) =(-\frac\pi4\frac{\gamma_1-\gamma_2}{\gamma_1+\gamma_2},0) $ and, consequently,$$\frac{\gamma_1-\gamma_2}{\gamma_1+\gamma_2}\frac{\pi}{4}{\mathbf e}_1+{\mathcal F}(\tilde b^*)=0.$$  According to Corollary \ref{finalcor} we have  ${\rm det}\,{\cal F}(\tilde b^*)\neq  0$, so $0$ is a zero for the map $\tilde b\mapsto \frac{\gamma_1-\gamma_2}{\gamma_1+\gamma_2}\frac{\pi}{4}{\mathbf e}_1+{\cal F}(\tilde b)$ which is stable under uniform perturbations. Assuming that $M>0$ is sufficiently large such that $|\tilde b^*|\leq M\sqrt{32}$, then  the uniform stability gives that, if $0<\eta<1$  is sufficiently small,
then for $\la$ small enough the left hand side of \eqref{tccone1} has a zero $\tilde b_\la$ with $|\tilde b_\la-\tilde b^*|\leq \eta$ or, equivalently,
the left hand side of \eqref{tccone} has a zero $b_\la$ with $|b_\la-\sqrt{\frac{\la}{32}}\,\tilde b^*|\leq \sqrt{\frac{\la}{32}}\eta $. So for such $b_\la$  the linear system \eqref{tccone} has only the trivial solution $c_1=c_2=0$. The arbitrariness of $\eta$ implies $$b_\la-\sqrt{\frac{\la}{32}}\,\tilde b^* =o(\sqrt\la).$$ This concludes the proof of Theorem \ref{main1}.
 
\bigskip

\noindent{{\large\bf{Proof of Theorems \ref{th1} Part II: Sufficient condition.}} }Theorem \ref{main1} provides a solution to the  problem \eqref{eq1}  of the form $$v_\la=PW_\la+\phi_\la$$   for some $b=b_\la$ with $b_\la=(\frac{\la}{32})^{\frac12}\,\tilde b^*+o(\sqrt\la)$ and $\tilde b^*=(\tilde b^*_1,0)$ satisfying \eqref{rmmmm}. So  
  by \eqref{pro-exp1} we have $$v_\la= -2\log\bigg(\frac{\la}{32}+\bigg|x^2-\sqrt{\frac{\la}{32}}\,\tilde b^*\bigg|^2\bigg)+o(1).$$  
Moreover, since $|x^2-(\frac{\la}{32})^{\frac12}\,\tilde b^*|\geq c |x|^2$ with $c=\inf_{\R^2}\frac{|y-\tilde b^*|^2}{|y|^2}$, whereas  by concavity we obtain  $\log\(\frac{\la}{32}+c|x|^4\)\geq \frac12\log\(\frac{\la}{16}\)+\frac12\log (2c|x|^{4})$, consequently, 

$$ \begin{aligned}v_\la(x) +4\log|x|+\log\la&\leq -2\log\Big(\frac{\la}{32}+c|x|^4\Big)+4\log|x|+\log\la+o(1)\\ &\leq -\log\Big(\frac{\la}{16}\Big)-\log (2c|x|^{4}) +4\log|x|+\log\la+o(1)=O(1)\end{aligned}$$which implies that $v_\la$ reveals a simple blow-up profile according to \eqref{simple}.

\begin{lemma}\label{menoo}
Let
$$
 J(\xi_1)=\int_{\mathbb{R}^2}
\frac{|x+(\xi_1,0)|\,x_1}{(1+|x|^2)^3}\,dx,
\qquad \xi_1\in\mathbb{R}.
$$
Then $J\in C^2(\mathbb{R})$, $J$ is odd, $J'(\xi_1)>0$ for all $\xi_1\in\R$ and 
$$
J(\xi_1)\to \frac\pi4\hbox{ as }\xi_1\to +\infty.
$$

\end{lemma}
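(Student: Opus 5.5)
Throughout, I write $J$ in the translated form
$$
J(\xi_1)=\int_{\R^2}\frac{|x+(\xi_1,0)|\,x_1}{(1+|x|^2)^3}\,dx .
$$
The plan is to work with this form directly, so that all dependence on $\xi_1$ sits in the Lipschitz factor $|x+(\xi_1,0)|$. The weight $(1+|x|^2)^{-3}$ is integrable against $|x|^2$, and $|x+(\xi_1,0)|\le |x|+|\xi_1|$. So the integral converges absolutely, uniformly for $\xi_1$ in compact sets, and $J$ is continuous.

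\textbf{Oddness.} Change variables $x\mapsto(-x_1,x_2)$. This gives
$$
J(-\xi_1)=\int_{\R^2}\frac{|(-x_1-\xi_1,x_2)|\,(-x_1)}{(1+|x|^2)^3}\,dx=-J(\xi_1),
$$
because $|(-x_1-\xi_1,x_2)|=|x+(\xi_1,0)|$.

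\textbf{Regularity and the sign of $J'$.} I differentiate under the integral. For $x\neq(-\xi_1,0)$ we have $\partial_{\xi_1}|x+(\xi_1,0)|=\frac{x_1+\xi_1}{|x+(\xi_1,0)|}$, which is bounded by $1$. Dominated convergence then gives $J\in C^1$ with
$$
J'(\xi_1)=\int_{\R^2}\frac{(x_1+\xi_1)\,x_1}{|x+(\xi_1,0)|\,(1+|x|^2)^3}\,dx .
$$
The second derivative of the norm is $\frac{x_2^2}{|x+(\xi_1,0)|^3}$. This is singular like $|x+(\xi_1,0)|^{-1}$ near the point $x=(-\xi_1,0)$, which is still locally integrable in the plane. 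Hence dominated convergence applies again, locally uniformly in $\xi_1$, and yields $J\in C^2$ with
$$
J''(\xi_1)=\int_{\R^2}\frac{x_2^2\,x_1}{|x+(\xi_1,0)|^3(1+|x|^2)^3}\,dx .
$$

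Positivity of $J'$ is the main step, since the integrand $(x_1+\xi_1)x_1$ has no sign. I expect to get it by an integration by parts that moves the derivative onto the weight. Write $x_1(1+|x|^2)^{-3}=-\tfrac14\,\partial_{x_1}(1+|x|^2)^{-2}$. Then
$$
J(\xi_1)=\frac14\int_{\R^2}\frac{x_1+\xi_1}{|x+(\xi_1,0)|}\,\frac{dx}{(1+|x|^2)^2}=\frac14\,I(\xi_1,0),
$$
where $I$ is the function of Lemma \ref{signlemma} after translating $z=x+(\xi_1,0)$. Lemma \ref{signlemma} already gives $\partial_{\xi_1}I(\xi_1,0)>0$, via the explicit formula with integrand $\frac{z_2^2}{|z-(\xi_1,0)|^3}\frac{1}{(1+|z|^2)^2}$, which is positive. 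Therefore $J'>0$. It also gives $I(\xi_1,0)\to\pi$, so $J(\xi_1)\to\pi/4$ as $\xi_1\to+\infty$.

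The integration by parts is justified by cutting off at large radius. The boundary term at infinity decays like $R\cdot R\cdot R^{-4}\to0$. Near the singular point, the derivative of $|x+(\xi_1,0)|$ is bounded, so no boundary term arises there.

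For the limit it is also easy to argue directly. Divide the integrand by $\xi_1$-independent bounds: $\frac{x_1+\xi_1}{|x+(\xi_1,0)|}\to1$ pointwise as $\xi_1\to+\infty$ and is bounded by $1$. Dominated convergence then gives
$$
J(\xi_1)\to\frac14\int_{\R^2}(1+|x|^2)^{-2}\,dx=\frac{\pi}{4}.
$$

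The only delicate point is checking the integration-by-parts identity $J=\tfrac14 I(\cdot,0)$ carefully, including signs. Once that holds, every claim reduces to Lemma \ref{signlemma}.
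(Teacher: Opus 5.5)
Your proposal is correct and follows the paper's proof. The integration by parts $x_1(1+|x|^2)^{-3}=-\tfrac14\partial_{x_1}(1+|x|^2)^{-2}$ gives $J=\tfrac14 I(\cdot,0)$, after which oddness, $J'>0$ and the limit $\pi/4$ are read off from Lemma \ref{signlemma}; your direct checks of oddness and of the limit are fine extras.

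The only slip is in your $C^2$ justification. The bound $|x+(\xi_1,0)|^{-1}$ is not a dominating function that is uniform in $\xi_1$: its supremum over an interval of $\xi_1$ behaves like $|x_2|^{-1}$ on a strip, which is not integrable. To fix this, first substitute $w=x+(\xi_1,0)$ in $J'$, so that all $\xi_1$-dependence moves into the smooth weight $(w_1-\xi_1)(1+|w-(\xi_1,0)|^2)^{-3}$, and only then differentiate under the integral.
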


\begin{proof} 

Let $\xi\in\mathbb{R}^2$. By applying integration by parts we derive
$$J(\xi_1)=-\frac14\intr |x+(\xi_1,0)|\partial_{x_1}\bigg(\frac{1}{ (1+|x|^2)^2}\bigg)dx= \frac14 \intr\frac{x_1+\xi_1}{|x+\xi|}\frac{1}{(1+|x|^2)^{2}}dx=\frac14 I(\xi_1,0)$$
and the result follows directly by Lemma \ref{signlemma}

\end{proof}

As a corollary we deduce the following result.

\begin{lemma}\label{finalcor}
Let

$$
\mathcal F(\xi)
=\bigg(
\int_{\mathbb{R}^2}\frac{|x-\xi|\,x_1}{(1+|x|^2)^3}\,dx,
\int_{\mathbb{R}^2}\frac{|x-\xi|\,x_2}{(1+|x|^2)^3}\,dx
\bigg),
\qquad \xi\in\mathbb{R}^2.
$$
Then $\mathcal F\in C^1(\mathbb{R}^2)$ and the Jacobian matrix $D\mathcal F(\xi)$ is non–degenerate at every point of the form
$(\xi_1,0)$ with $\xi_1\in\R$.
\end{lemma}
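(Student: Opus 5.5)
We must show that $\mathcal F$ is $C^1$ and that $D\mathcal F(\xi_1,0)$ is nondegenerate. First change variables $x\mapsto x+\xi$ is not convenient because the weight $(1+|x|^2)^{-3}$ would move; instead we differentiate under the integral. Since $\xi\mapsto|x-\xi|$ is Lipschitz with gradient $-(x-\xi)/|x-\xi|$ (defined a.e.), and $|x_j|(1+|x|^2)^{-3}$ is integrable, dominated convergence gives
\[
\partial_{\xi_i}\mathcal F_j(\xi)=-\int_{\R^2}\frac{(x_i-\xi_i)\,x_j}{|x-\xi|\,(1+|x|^2)^3}\,dx ,
\]
which is continuous in $\xi$ because the integrand is bounded by $|x_j|(1+|x|^2)^{-3}$ and depends continuously on $\xi$ off the null set $\{x=\xi\}$. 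Hence $\mathcal F\in C^1(\R^2)$.

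Next, at $\xi=(\xi_1,0)$ I use the reflection $x_2\mapsto -x_2$, which leaves $|x-\xi|$ and the weight invariant. The off-diagonal entries $\partial_{\xi_2}\mathcal F_1$ and $\partial_{\xi_1}\mathcal F_2$ have integrands odd in $x_2$ (namely $x_2x_1$ and $(x_1-\xi_1)x_2$), so they vanish. Thus $D\mathcal F(\xi_1,0)$ is diagonal and it suffices to show both diagonal entries are nonzero.

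For $\partial_{\xi_1}\mathcal F_1$: note $\mathcal F_1(\xi_1,0)=\int\frac{|x-(\xi_1,0)|x_1}{(1+|x|^2)^3}dx=-J(-\xi_1)\cdot(-1)$ after $x\mapsto -x$; precisely, substituting $x\to -x$ gives $\mathcal F_1(\xi_1,0)=-J(\xi_1)$, so $\partial_{\xi_1}\mathcal F_1(\xi_1,0)=-J'(\xi_1)<0$ by Lemma \ref{menoo}. For $\partial_{\xi_2}\mathcal F_2$ at $(\xi_1,0)$, I integrate by parts exactly as in the proof of Lemma \ref{menoo}: writing $x_2(1+|x|^2)^{-3}=-\frac14\partial_{x_2}(1+|x|^2)^{-2}$,
\[
\mathcal F_2(\xi)=\frac14\int_{\R^2}\frac{x_2-\xi_2}{|x-\xi|}\frac{dx}{(1+|x|^2)^2},
\]
and differentiating in $\xi_2$ (again justified as above, after shifting $x\mapsto x+\xi$ if needed to move the singular factor onto the smooth weight) yields
\[
\partial_{\xi_2}\mathcal F_2(\xi)=-\frac14\int_{\R^2}\frac{(x_1-\xi_1)^2}{|x-\xi|^3}\frac{dx}{(1+|x|^2)^2}<0,
\]
mirroring the formula for $\partial_{\xi_1}I$ in Lemma \ref{signlemma}. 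The cleanest route is to write $\mathcal F_2(\xi)=\frac14\int\frac{y_2}{|y|}\frac{dy}{(1+|y+\xi|^2)^2}$, differentiate the smooth factor, then integrate by parts back to obtain the displayed strictly signed integral (integrable since $|x-\xi|^{-1}$ is locally integrable in the plane). Hence $\det D\mathcal F(\xi_1,0)=\partial_{\xi_1}\mathcal F_1\,\partial_{\xi_2}\mathcal F_2>0$.

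The main obstacle is the justification of the second-derivative formula near the singularity $x=\xi$, where $\partial^2_\xi|x-\xi|\sim|x-\xi|^{-1}$; I would handle it by the shift to $y=x-\xi$, so that all $\xi$-derivatives fall on the smooth Gaussian-type weight, and only then undo the integration by parts, using local integrability of $|y|^{-1}$ in $\R^2$.
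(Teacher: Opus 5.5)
Your proof is correct and follows the paper's argument: the reflection $x_2\mapsto -x_2$ kills the off-diagonal entries, and $\partial_{\xi_1}\mathcal F_1(\xi_1,0)=-J'(\xi_1)<0$ by Lemma~\ref{menoo}. Your sign is the right one, since $\mathcal F_1(\xi_1,0)=J(-\xi_1)=-J(\xi_1)$; the paper's text writes $J'(\xi_1)<0$, which is a slip. Your dominated-convergence argument for $\mathcal F\in C^1$ also supplies a detail the paper omits. For $\partial_{\xi_2}\mathcal F_2$, your integration-by-parts detour is valid but unnecessary: your own first-derivative formula with $i=j=2$ and $\xi_2=0$ already gives $\partial_{\xi_2}\mathcal F_2(\xi_1,0)=-\int_{\R^2}\frac{x_2^2}{|x-(\xi_1,0)|(1+|x|^2)^3}\,dx<0$, which is exactly what the paper uses, so the ``main obstacle'' you describe never arises.
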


\begin{proof}
The mixed derivatives vanish at points $(\xi_1,0)$:
$$\partial_{\xi_2}\mathcal F_1(\xi_1,0)=-\int_{\mathbb R^2}
\frac{x_1x_2}{|x-(\xi_1,0)|(1+|x|^2)^3}\,dx =0,$$ $$\partial_{\xi_1}\mathcal F_2(\xi_1,0)=-\int_{\mathbb R^2}
\frac{(x_1-\xi_1)x_2}{|x-(\xi_1,0)|(1+|x|^2)^3}\,dx =0,
$$ since the integrands are odd with respect to the reflection $x_2\mapsto -x_2$.
 Hence
$$
D\mathcal F(\xi_1,0)
=
\begin{pmatrix}
\partial_{\xi_1}\mathcal F_1(\xi_1,0) & 0\\
0 & \partial_{\xi_2}\mathcal F_2(\xi_1,0)
\end{pmatrix}.
$$

A direct computation gives
$$
\partial_{\xi_2}\mathcal F_2(\xi_1,0)
=-
\int_{\mathbb R^2}
\frac{x_2^2}{|x-(\xi_1,0)|(1+|x|^2)^3}\,dx <0.
$$

We now turn to $\partial_{\xi_1}\mathcal F_1(\xi_1,0)$, which can be written according to Lemma \ref{menoo} as
$$
\partial_{\xi_1}\mathcal F_1(\xi_1,0)
=J'(\xi_1)<0
$$

$$
\det D\mathcal F(\xi_1,0)
=
\partial_{\xi_1}\mathcal F_1(\xi_1,0)\,
\partial_{\xi_2}\mathcal F_2(\xi_1,0)
>0 ,
$$
and the Jacobian is non–degenerate.
\end{proof}

\end{document}